\newtheorem{theorem}{Theorem}[section]
\newtheorem{thmz}{Theorem}             
 \DeclareMathAlphabet{\mathpzc}{OT1}{pzc}{m}{it}
\newtheorem{proposition}[theorem]{Proposition}
\newtheorem{lemma}[theorem]{Lemma}
\newtheorem{corollary}[theorem]{Corollary}
\theoremstyle{definition}
\newtheorem{definition}[theorem]{Definition}
\newtheorem{example}[theorem]{Example}
\theoremstyle{remark}
\newtheorem{remark}[theorem]{Remark}
\newcommand{\rmap}{\longrightarrow}
\newcommand{\ud}{\mathrm{d}}
\newcommand{\BG}{\mathrm{B}G}
\newcommand{\EG}{\mathrm{E}G}
\newcommand{\nerve}{\mathrm{N}(G)}
\newcommand{\V}{V}
\newcommand{\W}{\mathrm{W}}
\begin{document}
\vspace{15cm}
 \title{The Weil algebra and the Van Est isomorphism}
\author{Camilo Arias Abad} 
\address{Institut f\"ur Mathematik, Universit\"at Z\"urich, Switzerland.}
\email{camio.arias.abad@math.uzh.ch}

\author{Marius Crainic}
\address{Department of Mathematics, Utrecht University, The Netherlands.}
\email{m.crainic@uu.nl}

\thanks{Camilo Arias Abad was partially supported by NWO Grant ``Symmetries and Deformations in Geometry'' and by SNF Grant 200020-121640/1. Marius Crainic was supported by NWO Vidi Grant no. 639.032.712.}

 \begin{abstract} This paper belongs to a series of papers devoted to the study of the cohomology of classifying spaces.
Generalizing the Weil algebra of a Lie algebra and Kalkman's BRST model,
here we introduce the Weil algebra $W(A)$ associated to any Lie algebroid $A$. We then show that
this Weil algebra is related to the Bott-Shulman-Stasheff complex (computing the cohomology of the classifying space)
via a Van Est map and we prove a Van Est isomorphism theorem. As application, we generalize and
find a simpler  more conceptual proof of  the main result of \cite{BUR} on the reconstructions of multiplicative
forms and of a result of \cite{WeXu, Cra2} on the reconstruction of connection 1-forms. This reveals the relevance of the Weil algebra and Van Est maps to the integration and the pre-quantization of Poisson (and Dirac)
manifolds.
\end{abstract}
 \maketitle
\setcounter{tocdepth}{1}
\tableofcontents

\section*{Introduction}

This paper belongs to a series devoted to the study of the cohomology of classifying spaces.
Here we extend the construction of the Weil algebra of a Lie algebra to
the setting of Lie algebroids and we show that one of the standard complexes computing
the cohomology of classifying spaces (the ``Bott-Shulman-Stasheff complex'') is related to the
Weil algebra via a Van Est map. This Van Est map is new even in the case of Lie
groups. As application, we generalize and we find a simpler (and conceptual) proof of
the main result of \cite{BUR} on the integration of Poisson and related structures.

We will be working in the framework of groupoids \cite{McK}. Lie groupoids and Lie algebroids
are a generalization of Lie groups and Lie algebras, generalization which arises in various geometric contexts such as
foliation theory, Poisson geometry, equivariant geometry. Probably the best known example of a Lie groupoid is
the homotopy groupoid of a manifold $M$, which consists of homotopy classes of paths in $M$, each such
path being viewed as an ``arrow'' from the initial point to the ending point. In the presence of a foliation on $M$,
by restricting to paths inside leaves and leafwise homotopy (or holonomy), one arrives at the groupoids which
are central to foliation theory \cite{Haef}. When $G$ is a Lie group of symmetries of a manifold
$M$, it is important to retain not only the group structure of $G$ but also at the way that the points of $M$ are affected
by the action. More precisely, one considers pairs $(g, x)\in G\times M$ and one views such a pair as an ``arrow'' from
$x$ to $gx$; one obtains a groupoid $G\ltimes M$, called the action groupoid. In Poisson geometry, the relevance of Lie groupoids \cite{Wein2} is a bit more subtle- they
arise through their infinitesimal counterpart (Lie algebroids) after an integration process. They also arise as the canonical solution to the problem of finding
symplectic realizations of Poisson manifolds. Indeed, an important feature of the resulting groupoids
is that they carry a canonical symplectic structure (for instance, if $M$ is endowed with the zero-Poisson structure, the resulting
groupoid is $T^*M$ with the canonical symplectic structure). Conceptually, such symplectic forms can be seen as arising after integrating
certain infinitesimal data, called IM forms (see Example \ref{IM}). This integration step is based on the main result of \cite{BUR} mentioned above, result
that will be generalized and proved more conceptually in this paper. The key remark is that the rather mysterious equations that IM forms have to
satisfy are nothing but cocycle equations in the Weil algebra of the associated Lie algebroid.

The classifying space $BG$ \cite{Sea} of a Lie groupoid $G$ has the same defining property as in the case of Lie groups: it
is the base of an universal principal $G$-bundle $EG\rmap BG$. It is unique up to homotopy, and there
are several known constructions of $BG$ as a topological space and of $EG$ as topological $G$-bundle
- e.g. via a Milnor-type construction \cite{Haef} or via simplicial methods \cite{Bott, Bott2}. Due to the universality of $EG$,
the cohomology of $BG$  is the algebra of universal characteristic classes for $G$-bundles. For instance, if $G$ is a compact Lie group,
then $H^*(BG)= S(\mathfrak{g}^*)^G$ is the space of $G$-invariant polynomials on the Lie algebra $\mathfrak{g}$ of $G$,
which is the source of the Chern-Weil construction of characteristic classes via connections. The cohomology of classifying
spaces is interesting also from the point of view of equivariant cohomology; for instance, for the Lie groupoid
$G\ltimes M$ associated to an action of a Lie group $G$ on a manifold $M$, the cohomology of $B(G\ltimes M)$ coincides
with the equivariant cohomology of $M$. For general Lie groupoids $G$, simplicial techniques \cite{Sea} provides us
with a huge but explicit complex computing $H^*(BG)$, known as the Bott-Shulman-Stasheff complex \cite{Bott2}, which is a double complex suggestively denoted
\[ \Omega^{\bullet}(G_{\bullet}).\]

We can now explain the words in the title and our main results.
First of all, the classical Weil algebra $W(\mathfrak{g})$ associated to a Lie group $G$ (or, better, to its Lie algebra $\mathfrak{g}$)
arises as the algebraic model for the ``DeRham complex'' of the total space $EG$ \cite{Cartan} (see also \cite{GS, Getzler}). From the point of view of
characteristic classes, the role of $W(\mathfrak{g})$ is to provide an explicit and geometric construction of such classes (the Chern-Weil construction).
From the point of view of equivariant cohomology, it is useful in constructing explicit geometric models for equivariant cohomology
(such as the Cartan model). As we have already mentioned, the first aim of this paper is to extend the construction of the Weil algebra from Lie algebras to
Lie algebroid- for any Lie algebroid $A$, its Weil algebra, denoted
\[ W^{\bullet, \bullet}(A),\]
will be a differential bi-graded algebra.

Next, the classical Van Est map \cite{Est} relates the differentiable cohomology of a Lie group $G$
to its infinitesimal counterpart, i.e. to the Lie algebra cohomology of the Lie algebra $\mathfrak{g}$ of $G$.
It is an isomorphism up to degree $k$ provided $G$ is $k$-connected. The Van Est isomorphism
extends to Lie groupoids and Lie algebroids \cite{WeXu, Cra1} without much trouble.
What is interesting to point out here is that the complex computing the differentiable cohomology of $G$ is just the
first line $\Omega^{0}(G_{\bullet})$ of the Bott-Shulman-Stasheff complex, while the complex computing the Lie algebroid cohomology
is just the first line $W^{\bullet, 0}(A)$ of the Weil algebra.
With these in mind, the second aim of the paper is to extend the classical Van Est map to a map of bi-graded
differential algebras
\[ V: \Omega^{\bullet}(G_{\bullet})\rmap W^{\bullet, \bullet}(A),\]
for any Lie groupoid $G$ with associated Lie algebroid $A$. Topologically, this map is just an explicit model
for the map induced by the pull-back along the projection $\pi: EG\rmap BG$.
However, what is more interesting is that the Van Est isomorphism holds not only along the first line, but
along all lines. More precisely, we will prove the following:

\begin{thmz}
Let $G$ be a Lie groupoid with Lie algebroid $A$ and $k$-connected
source fibers. When restricted to any $q$-line ($q$ arbitrary), the Van
Est map induces an isomorphism in cohomology
\begin{equation*}
V:H^p(\hat{\Omega}^q(G_{\bullet}))\rightarrow H^p(W^{\bullet,q}(A)),
\end{equation*}
for all $p \leq k$, while for $p= k+1$ this map is injective.
\end{thmz}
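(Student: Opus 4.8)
The plan is to fix the column degree $q$ and prove a ``Van Est isomorphism with coefficients'', treating the $q$-line as the differentiable (resp. algebroid) cohomology of $G$ (resp. $A$) with values in a complex of $q$-forms. First I would set up, for each fixed $q$, a dictionary identifying the normalized Bott--Shulman row $\hat{\Omega}^q(G_\bullet)$ with the differentiable cochain complex of $G$ with coefficients in a suitable $G$-representation $E_q$ assembled from the $q$-forms transverse to the simplicial direction, and the Weil row $W^{\bullet,q}(A)$ with the Chevalley--Eilenberg complex of $A$ with coefficients in the corresponding infinitesimal $A$-module. With this dictionary in place, the statement becomes an instance of the groupoid Van Est theorem with coefficients, whose expected output is precisely an isomorphism for $p\le k$ and injectivity at $p=k+1$.

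The technical heart is the construction of an auxiliary double complex interpolating between the two rows together with the proof that an intermediate resolution is acyclic up to degree $k$. Concretely, at each simplicial level $p$ I would resolve $\Omega^q(G_p)$ by forms along the source-fiber directions of the homogeneous (thickened) nerve $\{(g_0,\dots,g_p)\}$ lying over a common source, equipped with the simultaneous simplicial (bar) differential and the de Rham differential along the source fibers. The Van Est map $V$ is then realized as the edge map of this double complex: one edge recovers $\hat{\Omega}^q(G_\bullet)$ by restriction to the thin simplices, while the opposite edge recovers $W^{\bullet,q}(A)$ by differentiating along $A=\ker(ds)|_M$ at the units.

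The main obstacle, and the only place where the hypothesis is used, is showing that the vertical (source-fiber) direction of this double complex is acyclic in the relevant range. This is a Poincar\'e-lemma and averaging step: for each base point $x$ the source fiber $s^{-1}(x)$ is $k$-connected, so its simplicial de Rham model (a contractible-up-to-degree-$k$, $EG$-type object) has vanishing cohomology for $p\le k$ and at worst a one-sided obstruction at $p=k+1$. I would make this precise by producing an explicit contracting homotopy on the source-fiber complex, built from integration over simplices in $s^{-1}(x)$, and checking its compatibility with the simplicial faces; here the normalization encoded in the hat of $\hat{\Omega}$ is what lets the homotopy descend cleanly, by killing the contribution of degenerate simplices. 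Pinning down the exact degrees in which this homotopy exists is what produces the cut-off at $p=k$ against mere injectivity at $p=k+1$.

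Granting acyclicity of the intermediate resolution up to degree $k$, the conclusion follows from a standard comparison of the two spectral sequences of the double complex (equivalently, an acyclic-assembly or zig-zag argument). Vertical acyclicity collapses one spectral sequence onto $H^p(\hat{\Omega}^q(G_\bullet))$ and the other onto $H^p(W^{\bullet,q}(A))$ and identifies the comparison map with $V$; the range $p\le k$ then yields the isomorphism, whereas the failure of vertical acyclicity exactly at degree $k+1$ degrades the statement to injectivity there. The case $q=0$ reproduces the classical Van Est theorem, which is a useful consistency check on the whole scheme.
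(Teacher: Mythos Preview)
Your architecture is essentially the paper's: an auxiliary double complex whose two augmentations recover $\hat{\Omega}^q(G_\bullet)$ and $W^{\bullet,q}(A)$, with one direction always contractible and the other acyclic up to degree $k$ via source-fiber connectivity, followed by identifying the zig-zag map with $V$. In the paper the intermediate object is made completely explicit as $W^{\bullet,q}(\mathcal{F}_\bullet)$, where $\mathcal{F}_j$ is the foliation of $G_{j+1}$ by the fibers of $d_0$; the simplicial (column) direction is contracted by $s_0^*$, and the foliated (row) direction is handled by a separate lemma computing $H^p(W^{\bullet,q}(\mathcal{F}))$ for a foliation with homologically $k$-connected leaves. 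Finally one checks by an explicit chase that the resulting comparison equals $\pm V$ on leading terms.

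Two points where your sketch diverges and would need work. First, the ``dictionary'' in your opening paragraph does not exist as stated: there is no honest $G$-representation $E_q$ with $\hat{\Omega}^q(G_\bullet)$ its differentiable complex, nor an honest $A$-module realizing $W^{\bullet,q}(A)$ as a Chevalley--Eilenberg complex; the correct coefficient object is the representation \emph{up to homotopy} $S^q(\mathrm{Ad}^*)$. Since you abandon this dictionary in favor of the direct double-complex argument anyway, this is a matter of framing, not a fatal gap. Second, and more substantively, your plan to prove row acyclicity by an explicit integration-over-simplices homotopy on the source fibers is the $q=0$ argument; for $q>0$ the row complex $W^{\bullet,q}(\mathcal{F}_j)$ is not simply foliated de~Rham but carries the additional symmetric-algebra correction terms, and it is not obvious how an integration homotopy interacts with those. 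The paper avoids this by observing that for a foliation the adjoint complex is quasi-isomorphic to the normal bundle $\nu[1]$, so $S^q(\mathrm{Ad}^*)\simeq \Lambda^q\nu^*$ as representations up to homotopy, reducing the question to ordinary foliated cohomology with coefficients in an honest bundle and then invoking the known vanishing theorem. You could either follow that route or carefully extend your integration homotopy to the full Weil complex, but the latter is not a step to wave through.
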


As a consequence, we prove the following generalization of the
reconstruction result for multiplicative $2$-forms which appears in
\cite{BUR}.

\begin{thmz}\label{xxx2}
Let $G$ be a source simply connected Lie groupoid over $M$ with Lie
algebroid $A$ and let $\phi \in \Omega^{k+1}(M)$ be a closed form.
Then there is a one to one correspondence between:
\begin{itemize}
\item multiplicative forms $\omega \in \Omega^k(G)$ which are $\phi$-relatively closed.
\item $C^{\infty}(M)$-linear maps $\tau:\Gamma(A)\rightarrow \Omega^{k-1}(M)$
satisfying the equations:
\begin{eqnarray}
& i_{\rho(\beta)}(\tau(\alpha))  =  -i_{\rho(\alpha)}(\tau(\beta)),\nonumber\\
& \tau([\alpha,\beta])  =  L_{\alpha}(\tau(\beta))-L_{\beta}(\tau(\alpha))
+d_{DR}(i_{\rho(\beta)}\tau(\alpha))+ i_{\rho(\alpha)\wedge
\rho(\beta)}(\phi).\nonumber
\end{eqnarray}
for all $\alpha, \beta\in \Gamma(A)$.
\end{itemize}
\end{thmz}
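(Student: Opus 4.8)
The plan is to read both sides of the asserted correspondence as \emph{relative cocycles}---on the global side inside the Bott--Shulman complex $\Omega^\bullet(G_\bullet)$, on the infinitesimal side inside the Weil algebra $W^{\bullet,\bullet}(A)$---with the fixed closed form $\phi$ serving as the common relative datum, and then to identify the passage from the former to the latter with the Van Est map $V$, whose cohomological behaviour is governed by the isomorphism theorem above. First I would unwind what a $\phi$-relatively closed multiplicative $k$-form is: multiplicativity of $\omega$ is exactly the statement $\partial\omega=0$ that $\omega$ is a cocycle for the horizontal simplicial differential in the $q=k$ line, while $\phi$-relative closedness says that its vertical de Rham differential is prescribed, $d\omega=\partial\phi=s^*\phi-t^*\phi$, with $d\phi=0$. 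Thus these $\omega$ are precisely the solutions of a relative cocycle equation in $\Omega^\bullet(G_\bullet)$ whose inhomogeneous term is built from $\phi$ (and, since multiplicative forms pull back to zero along the units, they automatically lie in the normalized complex $\hat\Omega^\bullet(G_\bullet)$ to which the theorem applies).

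Symmetrically, I would interpret the two displayed equations on $\tau$ as the components of the analogous relative cocycle equation in $W^{\bullet,\bullet}(A)$. Writing $d_A$ for the Lie algebroid differential, the second equation rearranges to $(d_A\tau)(\alpha,\beta)=i_{\rho(\alpha)\wedge\rho(\beta)}\phi$, that is, $d_A\tau$ equals the anchor-contraction of the fixed $\phi$; this is the infinitesimal counterpart, under $V$, of the relative-closedness condition $d\omega=\partial\phi$. The first equation $i_{\rho(\beta)}\tau(\alpha)=-i_{\rho(\alpha)}\tau(\beta)$ is the lower-degree component of the same closedness condition (equivalently, the constraint placing $\tau$ in the correct summand of $W(A)$), expressing that $\tau$ comes from a genuine skew form. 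Carrying out this step amounts only to matching the definition of the Weil differential $d_W$ with the printed formulas, and it is the concrete content of the slogan that the IM equations are nothing but cocycle equations in $W(A)$.

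Next I would show that $V$ realizes the assignment $\omega\mapsto\tau$. Since $V$ is by construction a morphism of bigraded differential algebras and restricts to the identity on forms on $M$, it carries the global relative cocycle $\omega$ over $\phi$ to an infinitesimal relative cocycle over the same $\phi$; an explicit evaluation of $V$ in the relevant bidegree gives $\tau(\alpha)=(i_{\vec\alpha}\omega)|_M$, the restriction to the units of the contraction of $\omega$ with the right-invariant vector field $\vec\alpha$ extending $\alpha$. This already establishes the formal half of the correspondence---differentiation sends a $\phi$-relatively closed multiplicative form to a solution of the two equations---and it requires no connectivity hypothesis.

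The crux is to upgrade this to a \emph{bijection of honest cocycles}, rather than of cohomology classes, and this is exactly where source-connectivity is used. For injectivity I would argue that a multiplicative form is determined by its restriction to the units together with its first-order datum $\tau$, so that $\tau=0$ forces $\omega=0$ by propagation along the connected source fibers. For surjectivity I would invoke the isomorphism theorem: source simple connectivity makes the source fibers $1$-connected, so $V$ is an isomorphism on $H^p$ for $p\le 1$ and injective for $p=2$; applied along the $q=k$ line (with the neighbouring degrees controlling the coupling to $\phi$), every solution $\tau$ is, up to a coboundary, the Van Est image of a global relative cocycle. The main obstacle is precisely the passage from ``up to a coboundary'' to ``on the nose'': I would remove the residual coboundary by integrating $\tau$ to a genuine multiplicative form along the source fibers, the path-independence of this integration being guaranteed by simple connectivity. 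Together with the injectivity above, this realizes differentiation and integration as mutually inverse bijections and closes the argument.
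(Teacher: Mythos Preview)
Your overall architecture matches the paper's: interpret $\omega+\phi$ as a cocycle in the Bott--Shulman double complex, interpret the IM equations on $\tau$ as the cocycle equations for $\sigma+\phi$ in $W(A)$ (this is the paper's Proposition~\ref{int-pr}), and use the Van Est map to pass between them. Your identification $\tau(\alpha)=i_{\vec\alpha}\omega|_M$ as $V(\omega)_1$ is exactly what the paper does, and your injectivity sketch (``$\tau=0$ forces $\omega=0$ by propagation along connected source fibers'') is the content of the paper's Lemma~\ref{l-uniq}.

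The gap is in your surjectivity argument. You correctly invoke the Van Est isomorphism on $H^1$ of the $q=k$ line to produce a multiplicative $\omega'$ with $V(\omega')=\sigma+d^{\mathrm h}\theta$ for some $\theta\in\Omega^k(M)$, but you then misidentify the remaining obstacle. Removing the coboundary is \emph{not} the hard part and requires no fiber integration: since $\theta$ lives on $M$, one simply sets $\omega=\omega'-\delta\theta$ and obtains $V(\omega)=\sigma$ on the nose, because $V(\delta\theta)=d^{\mathrm h}\theta$. Your proposal to ``integrate $\tau$ along the source fibers'' at this stage is precisely the BUR construction that the present argument is meant to bypass; invoking it here would be circular. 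What you have actually omitted is the verification that the $\omega$ so obtained is $\phi$-relatively closed. The paper handles this by a second use of the injectivity lemma: one checks that $V(\mathrm d\omega+\delta\phi)=d^{\mathrm v}\sigma+d^{\mathrm h}\phi=0$, observes that $\mathrm d\omega+\delta\phi$ is itself closed and multiplicative, and concludes from Lemma~\ref{l-uniq} that it vanishes. So the injectivity statement is used twice---once for uniqueness, once inside the existence proof---and this second use is what replaces the path-integration step you had in mind.
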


This theorem reveals the relationship
between the Van Est map $V$ and the integrability of Poisson and
Dirac structures. This relationship can be summarized as follows:
the Lie algebroid associated to a Poisson (or Dirac) structure comes together with
a tautological cocycle living in the Weil algebra of the associated
Lie algebroid. Integrating the Lie algebroid to a Lie groupoid $G$, the Van Est isomorphism
tells us that the tautological cocycle integrates to a cocycle on the
Bott-Shulman-Stasheff complex of the groupoid- which, in this case, is a multiplicative two-form
on the groupoid, making $G$ a symplectic ( or presymplectic \cite{BUR}) groupoid.

Another application of our Van Est isomorphism is a generalization (and another proof) of
the result of \cite{Cra2} on the construction of connection 1-forms on prequantizations.
Conceptually, it answers the following question: given $\omega\in \Omega^k(G)$ multiplicative and
closed, when can one write $\omega= \ud\theta$ with $\theta\in
\Omega^{k-1}(G)$ multiplicative? We prove the following result which, when $k= 2$, coincides with the
result of \cite{Cra2} that we have mentioned.

\begin{thmz}\label{xxx3}Let $G$ be a source simply connected Lie groupoid over $M$ with Lie
algebroid $A$ and let $\omega\in \Omega^k(G)$ be a closed
multiplicative $k$-form. Then there is a 1-1 correspondence between:
\begin{itemize}
\item $\theta\in \Omega^{k-1}(G)$ multiplicative satisfying $\ud(\theta)= \omega$.
\item $C^{\infty}(M)$-linear maps $l: \Gamma(A)\rmap \Omega^{k-2}(M)$ satisfying
\begin{eqnarray}
&i_{\rho(\beta)}(l(\alpha))=-i_{\rho(\alpha)}(l(\beta)),\label{mmk-1}\\
& c_{\omega}(\alpha, \beta)= - l([\alpha, \beta])+
L_{\rho(\alpha)}(l(\beta))- L_{\rho(\beta)}(l(\alpha))+
d_{DR}(i_{\rho(\beta)}l(\alpha)). \label{mmk-2}
\end{eqnarray}
\end{itemize}
where $c_{\omega}(\alpha, \beta)= i_{\rho(\alpha)\wedge \rho(\beta)}(\omega)|_{M}$. The correspondence is given by
\begin{equation*}
l(\alpha)= - i_{\alpha}(\theta)|_{M}.
\end{equation*}
\end{thmz}


We should now point out what this paper does not achieve. In the case of a compact Lie group $G$, 
the cohomology of $BG$ is isomorphic to $W(\mathfrak{g})_{\textrm{bas}}\cong S(\mathfrak{g}^*)$ where ``bas'' refers to the basic sub-complex, which consists of elements
that are horizontal and invariant with respect to $G$. More generally, 
for an action groupoid $G\ltimes M$, the cohomology of the classifying space is the equivariant cohomology $H^{*}_{G}(M)$, and the Weil algebra associated to the Lie algebroid $\mathfrak{g}\ltimes M$ of $G\ltimes M$ is $W(\mathfrak{g}\ltimes M)= W(\mathfrak{g})\otimes \Omega(M)$.
In this situation one can still define the basic subcomplex $W(\mathfrak{g}\ltimes M)_{\textrm{bas}}$, and it is isomorphic to Cartan's equivariant De-Rham complex
of $M$. Hence $W(\mathfrak{g}\ltimes M)_{\textrm{bas}}$ computes the equivariant cohomology of the action, provided $G$ is compact. We discuss this construction in Subsection \ref{subseq: Equivariant cohomology}, more details can be found in \cite{GS, KT}. Back to a general Lie groupoid $G$ with Lie algebroid $A$, it is natural to expect a construction of a ``basic subcomplex'' of $W(A)$ which
computes the cohomology of $BG$, at least under some compactness assumptions. However, that does not seem to work in an obvious way. 
On the other hand, we would like to mention here that in \cite{Ari-Cra2}, we did find a generalization of Bott's spectral sequence using the notion of ``representations up to homotopy'' to define the coadjoint representation of a Lie groupoid. It seems possible that these two approaches can be combined to obtain a general Cartan type model
for the cohomology of classifying spaces of Lie groupoids, but at the moment we do not know how to do it.

\vspace*{.1in}

This paper is organized as follows. In the first two sections we recall
some standard facts about classifying spaces of Lie groups, the classical Weil algebra, equivariant
cohomology, Lie groupoids and their classifying
spaces, the Bott-Shulman-Stasheff complex. In Section \ref{the Weil algebra} we
introduce the Weil algebra of a Lie algebroid (Definition
\ref{def-W}) by giving an
explicit choice free description. Then we point out the local formulas and
the relationship with the adjoint representation
and the algebra described in \cite{Ari-Cra1} using representations up to homotopy. Section
\ref{section the Van Est map} contains the definition of the Van Est
map $\V:\hat{\Omega}^{\bullet}(G_{\bullet})\rightarrow W(A)$
(Theorem \ref{proposition Van Est}). In Section \ref{section the Van
Est isomorphism} we prove the isomorphism theorem for the
homomorphism induced in cohomology by the Van Est map (Theorem
\ref{Van Est}). In Section \ref{integrability}  we explain the relation between the Van Est map and
the integration of IM forms to multiplicative forms
(Theorems \ref{cohomological integration1} and
\ref{cohomological integration2}). Finally, Section\ref{appendix} is
an appendix where we describe an infinite dimensional version of
Kalkman's BRST algebra which is used throughout the paper.

At this point we would also like to mention that, while this work has been carried out,
various people used the supermanifold language to construct the Weil algebra of Lie
algebroids. We found out about such descriptions from D. Roytenberg and P. Severa
(unpublished); A Van Est map using supermanifolds was constructed also by A. Weinstein (unpublished notes).
This construction appears in the PhD thesis of R. Mehta \cite{Meh}, where he describes the Weil algebra of a Lie algebroid in terms of supergeometry. That algebra is isomorphic to the one we present here.

{\bf Acknowledgements}
We would like to thank Henrique Bursztyn for suggesting to us Corollary \ref{cor:hen}.

\section{Lie groups: reminder on the classical Weil algebra and classifying spaces}\label{preliminaries Van Est}

In this section we recall some standard facts about classifying
spaces of Lie groups, Weil algebras
and equivariant cohomology. As references, we mention here \cite{Bott, Cartan, Bott, Getzler, KT}.

\subsection{The universal principal bundle} Associated to any Lie group
$G$ there is a classifying space $BG$ and an universal principal
$G$-bundle $EG\rmap BG$. These have the following universal
property. For any space $M$ there is a bijective correspondence
\[ [ M; BG]\ \  \stackrel{1-1}{\longleftrightarrow} \ \ \textrm{Bun}_{G}(M)\]
between homotopy classes of maps $f: M\rmap BG$ and isomorphism
classes of principal $G$-bundles over a $M$. This correspondence
sends a function $f$ to the pull-back bundle $f^*(EG)$. The
universal property determines $EG\rmap BG$ uniquely up to homotopy.
Another property that determines $EG$, hence also $BG$, uniquely, is
that $EG$ is a free, contractible $G$-space. There are explicit
combinatorial constructions for the classifying bundle of a group,
we will say more about this in a moment.

The cohomology of $BG$ is the universal algebra of characteristic
classes for principal $G$-bundles. Indeed, from the universal
property, any such bundle $P\rmap M$ is classified by a map $f_P:
M\rmap BG$. Although $f_P$ is unique only up to homotopy, the map
induced in cohomology
\[ f_{P}^{*}: H^{\bullet}(BG)\rmap H^{\bullet}(M)\]
only depends on $P$ and is called the characteristic map of $P$. Any
element $c\in H^{\bullet}(BG)$ will induce the $c$-characteristic
class of $P$, $c(P):= (f_{P})^*(c)\in H^{\bullet}(M)$. This is one
of the reasons one is often interested in more explicit models for
the cohomology of $BG$. When $G$ is compact, a theorem of Borel
asserts that
\[ H^{\bullet}(BG)\cong S(\mathfrak{g^*})^G.\]

Moreover, the map $f_{P}^{*}$ can be described geometrically. This
is the Chern-Weil construction of characteristic classes in
$H^{\bullet}(M)$ out of invariant polynomials on $\mathfrak{g}$,
viewed as a map
\begin{equation*}
S(\mathfrak{g}^*)^G\rmap H(M).
\end{equation*}

\subsection{The Weil algebra} Regarding the cohomology of $BG$ and the
construction of characteristic classes, the full picture is achieved
only after finding a related model for the \textit{De Rham
cohomology} of $EG$. This is the Weil algebra $W(\mathfrak{g})$ of
the Lie algebra of $G$ \cite{Cartan, GS, KT}. As a graded algebra, it is defined as
\[ W^n(\mathfrak{g})=\bigoplus_{2p+q= n} S^p(\mathfrak{g}^*)\otimes \Lambda^q(\mathfrak{g}^*).\]
We interpret its elements as polynomials $P$ on $\mathfrak{g}$ with
values in $\Lambda(\mathfrak{g}^*)$, but keep in mind that the
polynomial degree counts twice. The Weil algebra can also be made
into bi-graded algebra, with
\[ W^{p, q}(\mathfrak{g})= S^q(\mathfrak{g}^*)\otimes \Lambda^{p-q}(\mathfrak{g}^*), \]
and its differential $d_W$ can be decomposed into two components:
\[ d_W= d^{\textrm{h}}_{W}+ d^{\textrm{v}}_{W}.  \]
Here, $d^{\textrm{v}}_{W}$ increases $q$ and is given by
\[ d^{\textrm{v}}_{W}(P)(v)= i_v(P(v)),\]
while $d^{\textrm{h}}_{W}$ increases $p$ and is defined as a Koszul
differential of $\mathfrak{g}$ with coefficients in
$S^q(\mathfrak{g}^*)$- the symmetric powers of the coadjoint
representation. To be more explicit, it is customary to use
coordinates. A basis $e^1, \ldots , e^n$ for $\mathfrak{g}$ gives
structure constants $c_{j k}^{i}$. With this choice,
$W(\mathfrak{g})$ can be described as the free graded commutative
algebra generated by elements $\theta^1, \ldots , \theta^n$ of
degree $1$ and $\mu^1, \ldots , \mu^n$ of degree $2$, with
differential:
\begin{eqnarray*}
d^{\textrm{v}}_{W}(\theta^i) & = & \mu^i,\\
d^{\textrm{v}}_{W}(\mu^i) & = & 0,\\
d^{\textrm{h}}_{W}(\theta^i) &=&-\frac{1}{2}\sum_{j, k}c^i_{jk}\theta^j\theta^k,\\
d^{\textrm{h}}_{W}(\mu^i)&=&-\sum_{j, k}c^i_{jk}\theta^j\mu^k.
\end{eqnarray*}
Of course, $\theta^1, \ldots , \theta^n$ is just the induced basis
of $\Lambda^1(\mathfrak{g}^*)$, while $\mu^1, \ldots , \mu^n$ is the
one of $S^1(\mathfrak{g}^*)$. The cohomology of $W(\frak{g})$ is
$\mathbb{R}$ concentrated in degree zero, as one should expect from
the fact that $EG$ is a contractible space.

\subsection{$\mathfrak{g}$-DG algebras} To understand why
$W(\mathfrak{g})$ is a model for the De Rham complex of $EG$, one
has to look at the structure present in the De Rham algebras of
principal $G$-bundles. This brings us to the notion of
$\mathfrak{g}$-DG algebras (cf. e.g. \cite{GS, KT}). A $\mathfrak{g}$-DG algebra is a
differential graded algebra $(\mathcal{A}, d)$ (for us $\mathcal{A}$
lives in positive degrees and $d$ increases the degree by one),
together with
\begin{itemize}
\item  Degree zero derivations $L_v$ on the DG-algebra $\mathcal{A}$ ,
depending linearly on $v\in \mathfrak{g}$,  which induce an  action
of $\mathfrak{g}$ on $\mathcal{A}$.
\item Degree $-1$ derivations $i_v$ on the DG-algebra $\mathcal{A}$
such that for all $v, w\in \mathfrak{g}$
\[ [i_v, i_w]= 0, \ \ [L_v, i_w]= i_{[v, w]} \]
and such that they determine the Lie derivatives by Cartan's formula
\[di_v+ i_vd= L_v .\]
\end{itemize}
The basic subcomplex of a $\mathfrak{g}$-DG algebra $\mathcal{A}$ is
defined as
\[ \mathcal{A}_{\textrm{bas}}:= \{\omega\in \mathcal{A} : i_v\omega = 0, L_v\omega = 0,\ \ \forall  v\in \mathfrak{g}\}.\]
Of course, the De Rham complexes $\Omega(P)$ of $G$-manifolds $P$
are the basic examples of $\mathfrak{g}$-DG algebras. In this case
$L_v$ and $i_v$ are just the usual Lie derivative and interior
product with respect to the vector field $\rho(v)$ on $P$ induced
from $v$ via the action of $G$. If $P$ is a principal $G$-bundle
over $M$, then $\Omega(P)_{\textrm{bas}}$ is canonically isomorphic
to
 $\Omega(M)$.

The Weil algebra $W(\mathfrak{g})$ is a model for the De Rham
complex of $EG$. Indeed, $W(\mathfrak{g})$ is canonically a
$\mathfrak{g}$-DG algebra. The operators $L_v$ are the unique
derivations which, on $S^1(\mathfrak{g}^*)$ and on
$\Lambda^1(\mathfrak{g}^*)$, are just the coadjoint action. The
operators $i_v$ are just the standard interior products on the
exterior powers
hence they act trivially on $S(\mathfrak{g}^*)$.

\subsection{Equivariant cohomology} \label{subseq: Equivariant cohomology} The Weil algebra is useful because it
provides explicit models that compute equivariant cohomology (cf. e.g. \cite{GS, Getzler}). Given
a $G$-space $M$, the pathological quotient $M/G$ is often replaced
by the homotopy quotient
\[ M_G= (EG\times M)/G .\]
Here, $EG$ should be thought of as a replacement of the one-point
space $pt$ with a free $G$-space which has the same homotopy as
$pt$. The equivariant cohomology of $M$ is defined as
\[ H^{\bullet}_{G}(M):= H^{\bullet}(M_G).\]
When $M$ is a manifold, one would like to have a more geometric De
Rham model computing this cohomology. This brings us at Cartan's
model for equivariant cohomology. One defines the equivariant De
Rham complex of a $G$-manifold $M$ as
\[ \Omega_G(M)= (S(\mathfrak{g})\otimes \Omega(M))^G  ,\]
the space of $G$-invariant polynomials on $\mathfrak{g}$ with values
in $\Omega(M)$. The differential $d_{G}$ on $\Omega_G(M)$ is very
similar to the one of $W(\mathfrak{g})$:
\[ d_G(P)(v)= d_{DR}(P(v))+ i_v(P(v)).\]

Let us recall how the Weil algebra leads naturally to the Cartan model.
The idea is quite simple. $W(\mathfrak{g})$ is a model for the De
Rham complex of $EG$, the similar model for $EG\times M$ is
$W(\mathfrak{g})\otimes \Omega(M)$- viewed as a $\mathfrak{g}$-DG
algebra with operators
\[ i_v= i_v\otimes 1 + 1\otimes i_v,\ \ L_v= L_v\otimes 1+ 1\otimes L_v ,\]
and with differential
\[ d= d_W\otimes 1+ 1\otimes d_{DR} .\]
The resulting basic subcomplex should provide a model for the
cohomology of the homotopy quotient. Indeed, there is an
isomorphism:
\[ (W(\mathfrak{g})\otimes \Omega(M))_{\textrm{bas}}\cong \Omega_G(M).\]
This is best seen using Kalkman's BRST model, which is a
perturbation of $W(\mathfrak{g})\otimes \Omega(M)$. As a
$\mathfrak{g}$-DG algebra, it has
\[ i^{K}_v= i_v\otimes 1, \ L^{K}_v= L_v \otimes 1.\]
To describe its differential, we use a basis for $\mathfrak{g}$ as
above and set:
\[ d^{K}= d+ \theta^a\otimes L_{e^a} - \omega^a \otimes  i_{e^a} .\]
The resulting basic subcomplex  is $\Omega_G(M)$. In fact, there is
an explicit automorphism $\Phi$ of $W(\mathfrak{g})\otimes
\Omega(M)$  (the Mathai-Quillen isomorphism \cite{MQ}) which transforms $i_v,
L_v$ and $d$ into Kalkman's $i^{K}_v, L^{K}_v$, $d^K$.

\section{Lie groupoids: definitions, classifying spaces and the Bott-Shulman-Stasheff complex}
\label{oids}

In this section we recall some basic definitions on Lie groupoids and the construction
of the Bott-Shulman-Stasheff complex. As references, we use \cite{Bott2, McK, Moe}.

\subsection{Lie groupoids}  A groupoid is a category in which all arrows
are isomorphisms. A Lie groupoid is a groupoid in which the space of
objects $G_0$ and the space of arrows $G_1$ are smooth manifolds and
all the structure maps are smooth. More explicitly, a Lie groupoid
is given by a manifold of objects $G_0$ and a manifold of arrows
$G_1$ together with smooth maps $s,t:G_1 \rightarrow G_0$ the source
and target map, a composition map $m:G_1\times_{G_0} G_1\rightarrow
G_1$, an inversion map $\iota:G\rightarrow G$  and an identity map
$\epsilon:G_0 \rightarrow G_1$ that sends an object to the
corresponding identity. These structure maps should satisfy the
usual identities for a category. The source and target maps are
required to be surjective submersions and therefore the domain of
the composition map is a manifold. We will usually denote the space
of objects of a Lie groupoid by $M$ and say that $G$ is a groupoid
over $M$. We say that a groupoid is source $k$-connected if the
fibers of the source map are $k$-connected.

\begin{example}\rm \
A Lie group $G$ can be seen as a Lie groupoid in which the space of
objects is a point. Associated to any manifold $M$ there is the pair
groupoid $M\times M$ over $M$ for which there is exactly one arrow
between each pair of points. If a Lie group $G$ acts on a manifold
$M$ there is an associated action groupoid over $M$ denoted $G
\ltimes M$ whose space of objects is $G\times M$. Other important
examples of groupoids are the holonomy and monodromy groupoids of
foliations, the symplectic groupoids of Poisson geometry- some of
which arise via their infinitesimal counterparts, Lie algebroids.
\end{example}

\subsection{Lie algebroids} A Lie algebroid over a manifold $M$ is a
vector bundle $\pi:A \rightarrow M$ together with a bundle map
$\rho:A \rightarrow TM$, called the anchor map and a Lie bracket in
the space $\Gamma(A)$ of sections of $A$ satisfying Leibniz
identity:
\begin{equation*}
[\alpha,f\beta]=f[\alpha,\beta]+\rho(\alpha)(f)\beta,
\end{equation*}
for every $\alpha,\beta \in \Gamma(A)$ and $f \in C^{\infty}(M)$. It
follows that $\rho$ induces a Lie algebra map at the level of
sections. Examples of Lie algebroids are Lie algebras, tangent
bundles, Poisson manifolds, foliations and Lie algebra actions.
Given a Lie groupoid $G$, its Lie algebroid $A= A(G)$ is defined as
follows. As a vector bundle, it is  the restriction of the kernel of
the differential of the source map to $M$. Hence, its fiber at $x\in
M$ is the tangent space at the identity arrow $1_x$ of the source
fiber $s^{-1}(x)$.  The anchor map is the differential of the target
map. To describe the bracket, we need to discuss invariant vector
fields. A right invariant vector field on a Lie groupoid $G$ is a
vector field $\alpha$ which is tangent to the fibers of $s$ and such
that, if $g,h$ are two composable arrows and we denote by
$R^{\textrm{h}}$ the right multiplication by $h$, then
\begin{equation*}
\alpha (gh)=D_g(R^{\textrm{h}})(\alpha (g)).
\end{equation*}
The space of right invariant vector fields is closed under the Lie
bracket of vector fields
and is isomorphic to $\Gamma(A)$. Thus, we get the desired Lie bracket on $\Gamma(A)$. \\

Unlike the case of Lie algebras, Lie's third theorem does not hold
in general. Not every Lie algebroid can be integrated to a Lie
groupoid. The precise conditions for the integrability are described
in \cite{Cra-Fer}. However, Lie's first and second theorem do hold.
Due to the first one- which says that if a Lie algebroid is
integrable then it admits a canonical source simply connected
integration- one may often assume that the Lie groupoids under
discussion satisfy this simply-connectedness assumption.

\subsection{Actions} A left action of a groupoid $G$ on a space $P
\stackrel{\nu}{\rightarrow} M$ over $M$ is a map $G_1\times_{M} P
\rightarrow P$ defined on the space $G_1\times_{M} P$ of pairs $(g,
p)$ with $s(g)= \nu(p)$, which satisfies $\nu(gp)= t(g)$ and the
usual conditions for actions. Associated to the action of $G$ on
$P\rightarrow M$ there is the action groupoid, denoted $G\ltimes P$.
The base space is $P$, the space of arrows is $G_1\times_{M}P$, the
source map is the second projection and the target map is the
action. The multiplication in this groupoid is $(g, p)(h, q)= (gh,
q)$.

\begin{example}\label{action-nerve}\rm \
For a Lie groupoid $G$, we denote by $G_k$ the space of strings of
$k$ composable arrows of $G$. When we write a string of $k$
composable arrows $(g_1,\dots,g_k)$ we mean that
$t(g_i)=s(g_{i-1})$. Since the source and target maps are
submersions, all the $G_k$ are manifolds. Each of the $G_k$'s
carries a natural left action. First of all, we view $G_k$ over $M$
via the map
\[ t: G_k\rightarrow M, \ \ (g_1, \ldots , g_k)\mapsto t(g_1).\]
The left action of $G$ on $G_k\stackrel{t}{\rightarrow} M$ is just
\[ g (g_1, g_2, \ldots , g_k)= (gg_1, g_2, \ldots , g_k).\]
We denote by $P_{k-1}(G)$ the corresponding action groupoid.
\end{example}

Analogous to actions of Lie groupoids, there is the notion of
infinitesimal actions. An action of a Lie algebroid $A$ on a space
$P \stackrel{\nu}{\rightarrow} M$ over $M$ is a Lie algebra map
$\rho_P: \Gamma(A)\rightarrow \frak{X}(P)$, into the Lie algebra of
vector fields on $P$, which is $C^{\infty}(M)$-linear in the sense
that
\[ \rho_P(f\alpha)= (f\circ \nu) \rho_P(\alpha), \]
for all $\alpha\in \Gamma(A)$, $f\in C^{\infty}(M)$. Note that this
last condition is equivalent to the fact that $\rho_P$ is induced by
a bundle map $\nu^*A\rightarrow TP$.

As in the case of Lie groupoids, associated to an action of $A$ on
$P$ there is an action algebroid $A\ltimes P$ over $P$. As a vector
bundle, it is just the pull-back of $A$ via $\mu$. The anchor is
just the infinitesimal action $\rho_P$. Finally, the bracket is
uniquely determined by the Leibniz identity and
\[ [\mu^*(\alpha), \mu^*(\beta)]= \mu^*([\alpha, \beta]),\]
for all $\alpha, \beta\in \Gamma(A)$.

As expected, an action of a groupoid $G$ on a space $P
\stackrel{\nu}{\rightarrow} M$ over $M$ induces an action of the Lie
algebroid $A$ of $G$ on $P$. As a bundle map $\rho_P:
\nu^*A\rightarrow TP$ it is defined fiberwise as the differential at
the identity of the map
\[ s^{-1}(\nu(p))\rightarrow P ,\ \ g\mapsto gp.\]
Moreover, the Lie algebroid of $G\ltimes P$ is equal to $A\ltimes
P$.

\begin{example}\label{action-gr-inf}\rm \
For the action of $G$ on $G_k$ (Example \ref{action-nerve}), the
resulting algebroid $A\ltimes G_k$ is just the foliation
$\mathcal{F}_k$ of $G_k$ by the fibers of the map $d_0: G_k\rmap
G_{k-1}$, which deletes $g_1$ from $(g_1, \ldots , g_k)$.
\end{example}

\subsection{Classifying spaces}  We now recall the construction of the
classifying space of a Lie groupoid, as the geometric realization of
its nerve \cite{Sea}. First of all, the nerve of $G$, denoted $N(G)$, is the
simplicial manifold whose $k$-th component is $N_k(G)= G_k$, with
the simplicial structure given by the face maps:
\begin{equation*}
d_i(g_1,\dots,g_k)=
\begin{cases}
(g_2,\dots,g_k) & \text{if } i=0,\\
(g_1,\dots,g_ig_{i+1},\dots,g_k) & \text{if } 0<i<k,\\
(g_1,\dots,g_{k-1}) & \text{if } i=k,\\
\end{cases}
\end{equation*}
and the degeneracy maps:
\begin{equation*}
s_i(g_1,\dots,g_k)=(g_1\dots,g_{i},1,g_{i+1},\dots,g_k)
\end{equation*}
for $0\leq i\leq k$.\\

The thick geometric realization of a simplicial manifold
$X_{\bullet}$, is defined as the quotient space
\begin{equation*}
||X_{\bullet}||=(\coprod_{k\geq 0} X_k \times \Delta^k) / \sim,
\end{equation*}
obtained by identifying $(d_i(p),v)\in X_k \times \Delta^k$ with
$(p,\delta_i(v))\in  X_{k+1} \times \Delta^{k+1}$ for any $p\in
X_{k+1}$ and any $v\in \Delta^k$. Here $\Delta^k$ denotes the
standard topological $k$-simplex and $\delta_i: \Delta^k\rmap
\Delta^{k+1}$ is the inclusion as the $i$-th face. The classifying
space of a Lie groupoid $G$ is defined as
\begin{equation*}
\BG= || N(G) ||.
\end{equation*}

\begin{definition}
The universal $G$-bundle of a Lie groupoid $G$ is defined as
\begin{equation*}
EG=B(\mathrm{P_0}(G)),
\end{equation*}
the classifying space of the groupoid associated to the action of $G$ on itself.
\end{definition}

The nerve of $P_0(G)$ satisfies $(\mathrm{P_0}(G))_k=G_{k+1}$ which, for
each $k$, is a (principal) $G$-space over $G_k$. Moreover, each face
map is $G$-equivariant with respect to the right action, see Example
\ref{action-nerve}. It follows that $EG$ is a principal $G$-bundle
over $BG$
$$\xymatrix{
   G_1 \ar@ <-2 pt>[d] \ar@ <+2 pt>[d]&\EG \ar[d]_{\pi}\ar[dl]_{\mu}\\
G_0& \BG\\}$$

\begin{example}\rm \
When $G$ is a Lie group, one recovers (up to homotopy) the usual
classifying space of $G$ and the universal principal $G$-bundle
$EG\rightarrow BG$. More generally, for the groupoid $G \ltimes M$
associated to an action of $G$ on $M$, $B(G \ltimes M)$ is a model
for the homotopy quotient
\begin{equation*}
B(G \ltimes M) \cong M_G= (EG\times M)/G.
\end{equation*}
\end{example}

\subsection{The Bott-Shulman-Stasheff complex} In general, the geometric
realization of a simplicial manifold $X_\bullet$ is infinite
dimensional and in particular, it is not a manifold. However, there
is a De Rham theory that allows one to compute the cohomology of the
geometric realization $|| X_{\bullet} ||$ with real coefficients
using differential forms. Given a simplicial manifold $X_{\bullet}$
the Bott-Shulman-Stasheff complex \cite{Bott2}, denoted $\Omega(X_{\bullet})$, is the
double complex
$$
\xymatrix{
 \vdots& \vdots& \vdots & \\
 \Omega^2(X_0)\ar[u]\ar[r]^{\delta} &\Omega^2(X_1)\ar[u]^{\ud}\ar[r]^{\delta}
 &\Omega^2(X_2)\ar[u]^{\ud}\ar[r]^{\delta} & \dots\\
 \Omega^1(X_0)\ar[u]^{\ud}\ar[r]^{\delta} &\Omega^1(X_1)\ar[u]^{\ud}\ar[r]^{\delta}
 &\Omega^1(X_2)\ar[u]^{\ud}\ar[r]^{\delta} & \dots\\
 \Omega^0(X_0)\ar[u]^{\ud}\ar[r]^{\delta} &\Omega^0(X_1)\ar[u]^{\ud}\ar[r]^{\delta}
  &\Omega^0(X_2)\ar[u]^{\ud}\ar[r]^{\delta} & \dots\\
}$$ where the vertical differential is just the De Rham differential
and the horizontal differential $\delta$ is given by the simplicial
structure,
\begin{equation*}
\delta=\sum_{i=0}^{p+1}(-1)^id_i^*.
\end{equation*}
The total complex of $\Omega(X_{\bullet})$ is the De Rham model for
the cohomology of $||X_{\bullet}||$. We will also consider the
normalized Bott-Shulman-Stasheff complex of $X_{\bullet}$, denoted
$\hat{\Omega}(X)$, which is the subcomplex of $\Omega(X_{\bullet})$
that consists of forms $\eta \in \Omega^q(X_p)$ such that
$s_i^*(\eta)=0$ for all $i=0,\dots ,p-1$. The inclusion
$\hat{\Omega}(X_{\bullet}) \to \Omega(X_{\bullet})$ induces an
isomorphism in cohomology.
\begin{theorem}{\bf{(Dupont, Bott, Shulman, Stasheff,\dots)}} There is a natural isomorphism
\begin{equation*}
H(\mathrm{Tot}(\Omega(X_{\bullet}))) \cong H(|| X_{\bullet} ||).
\end{equation*}
\end{theorem}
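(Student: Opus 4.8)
The plan is to factor the comparison through Dupont's complex of \emph{simplicial differential forms} on $X_\bullet$, which simultaneously models $H(||X_\bullet||)$ geometrically and maps to the Bott-Shulman total complex by fibrewise integration over the simplices. Concretely, I would introduce the complex $A^\bullet(X_\bullet)$ whose degree-$n$ elements are families $\varphi = (\varphi_p)_{p\geq 0}$ with $\varphi_p\in \Omega^n(\Delta^p\times X_p)$ that are compatible with the face maps, in the sense that $(\delta_i\times \uid)^*\varphi_p = (\uid\times d_i)^*\varphi_{p-1}$ for all $i$; these are exactly the differential forms on $\coprod_p \Delta^p\times X_p$ that descend to the gluing relation defining $||X_\bullet||$. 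Equipped with the de Rham differential, $A^\bullet(X_\bullet)$ is a cochain complex functorial in $X_\bullet$.

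The first half of the argument is a de Rham theorem for the realization: $H^\bullet(A^\bullet(X_\bullet))\cong H^\bullet(||X_\bullet||;\mathbb{R})$. I would prove this by integrating a simplicial form over the piecewise-smooth singular simplices of $||X_\bullet||$, which yields a comparison map from $A^\bullet$ into the (piecewise-smooth) singular cochain complex of the realization, and then by running a spectral-sequence (acyclic models) argument over the skeletal filtration of $||X_\bullet||$. The input at each stage is the ordinary de Rham theorem on the manifold $X_p$ together with the contractibility of $\Delta^p$, which controls the associated graded and forces the comparison map to be a quasi-isomorphism.

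The second half identifies $A^\bullet(X_\bullet)$ with the Bott-Shulman complex. Integration over the simplex defines a map $I\colon A^n(X_\bullet)\to \bigoplus_{p+q=n}\Omega^q(X_p)$, $\varphi\mapsto \int_{\Delta^p}\varphi_p$, picking out the part of $\varphi_p$ of form-degree $p$ along $\Delta^p$. A Stokes computation shows $I$ is a cochain map into $\uTot(\Omega(X_\bullet))$ with total differential $\ud\pm\delta$: the de Rham differential along $X_p$ yields the vertical $\ud$, while the boundary term $\int_{\partial\Delta^p}$ reassembles, via the compatibility conditions, into precisely the alternating face sum $\delta=\sum_i(-1)^i d_i^*$. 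That $I$ is a quasi-isomorphism is Dupont's theorem: using the Whitney elementary forms attached to the barycentric coordinates on each $\Delta^p$, one builds a prolongation $E$ with $I\circ E=\uid$ together with an explicit fibrewise Poincaré homotopy $h$ on $\Delta^p\times X_p$ satisfying $\uid - E\circ I = \ud h + h\ud$. Composing the two halves, and using that every construction is natural in the simplicial manifold, gives the asserted isomorphism.

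I expect the main obstacle to be the quasi-isomorphism statement for $I$, i.e. constructing and checking Dupont's explicit homotopy $h$ so that the many face contributions cancel correctly; a secondary technical difficulty is the de Rham theorem of the first half, where one must work around the fact that $||X_\bullet||$ is not a manifold by filtering it by skeleta and patching the local de Rham isomorphisms along the filtration.
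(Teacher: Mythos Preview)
The paper does not prove this theorem: it is stated as a known result attributed to Dupont, Bott, Shulman, Stasheff (with reference to \cite{Bott2}) and no proof is given. Your proposal is a correct outline of the standard argument from the literature---factoring through Dupont's complex $A^\bullet(X_\bullet)$ of compatible forms on $\Delta^p\times X_p$, establishing a de~Rham theorem for the realization, and using Whitney elementary forms plus the fibrewise Poincar\'e homotopy to show that integration over the simplex is a quasi-isomorphism onto the Bott--Shulman total complex---so there is nothing to compare against here beyond noting that your sketch matches the approach of the cited sources.
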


For a Lie groupoid $G$ we will write $\Omega(G_{\bullet})$ instead
of $\Omega(\nerve)$. Note that the Bott-Shulman-Stasheff complex
$\Omega(G_{\bullet})$ provides us with an explicit model computing
$H^{\bullet}(BG)$. However, it is rather big and unsatisfactory
compared with the infinitesimal models available for Lie groups. We
would like to emphasize another aspect of the Bott-Shulman-Stasheff complex.
It is the natural place on which several geometric structures live.
The best example is probably that of multiplicative forms. We first
recall the definition (see, for instance, \cite{BUR}).

\begin{definition}
A multiplicative $k$-form on a Lie groupoid $G$ is a $k$-form
$\omega \in \Omega^k(G)$ satisfying
\begin{equation*}
d^{*}_{1}(\omega)=d^{*}_{0} (\omega)+d^{*}_{2}(\omega).
\end{equation*}
Given $\phi\in \Omega^{k+1}(M)$ closed, we say that $\omega$ is
relatively $\phi$-closed if $d\omega= s^*\phi- t^*\phi$.
\end{definition}

In terms of the Bott-Shulman-Stasheff complex, the conditions appearing in
the previous definition can be put together into just one: $\omega
+\phi$ is a cocycle in the Bott-Shulman-Stasheff complex of $G$.

\begin{example}\label{IM} \rm \ With this terminology, a symplectic groupoid is
a Lie groupoid $G$ endowed with a symplectic form $\omega$ which is
multiplicative. This corresponds to the case $k= 2$, $\phi= 0$ in
the previous definition. Symplectic groupoids arise in Poisson
geometry, the global geometry of a Poisson manifold is encoded in a
topological groupoid which is a symplectic groupoid provided it is
smooth. In turn, smoothness holds under relatively mild topological
conditions. The case $k =2$ and $\phi$-arbitrary arises from various
generalizations of Poisson geometry which, in turn, show up in the
study of Lie-group valued momentum maps. With these motivations,
relatively closed multiplicative two forms have been intensively
studied in \cite{BUR} culminating with their infinitesimal
description which we now recall. Given a Lie algebroid $A$ over $M$
and a closed $3$-form $\phi$  on $M$, an IM (infinitesimally
multiplicative) form on $A$ relative to $\phi$ is, by definition, a
bundle map
\[ \sigma: A\rmap T^*M, \]
satisfying
\begin{eqnarray*}
\langle\sigma(\alpha),\rho(\beta)\rangle  & = & -\langle\sigma(\beta),\rho(\alpha)\rangle,\\
\sigma([\alpha, \beta])  & = & L_{\rho(\alpha)}(\sigma(\beta))-
                                      L_{\rho(\beta)}(\sigma(\alpha))+
                                      d\langle\sigma(\alpha),\rho(\beta)\rangle + i_{\rho(\alpha\wedge \beta}(\phi),
\end{eqnarray*}
for all $\alpha, \beta \in \Gamma(A)$. Here
$\langle\cdot,\cdot\rangle$ denotes the pairing between a vector
space and its dual. If $A$ is the Lie algebroid of a Lie groupoid
$G$, then any multiplicative $2$-form $\omega$ on $G$ which is
closed relative to $\phi$ induces such a $\sigma$:
\[ \sigma(\alpha)= i_{\alpha}(\omega)|_{M}.\]
The main result of \cite{BUR} says that, if the $s$-fibers of $G$
are $1$-connected, then the correspondence $\omega\mapsto \sigma$ is
a bijection. The basic example of this situation comes from Poisson
geometry. The cotangent bundle $T^*M$ of a Poisson manifold $M$
carries an induced Lie algebroid structure and the identity map is
an IM form. If $T^*M$ is integrable and $\Sigma(M)$ is the (unique)
Lie groupoid with $1$-connected $s$-fibers integrating it, the
corresponding multiplicative two form $\omega$ is precisely the one
that makes $\Sigma(M)$ a symplectic groupoid.
\end{example}

\section{The Weil algebra}\label{the Weil algebra}

In this section we introduce and discuss Weil algebras in the
context of Lie algebroids.

Throughout the section, $A$ is a fixed Lie algebroid over $M$.
The Weil algebra of $A$, denoted $W(A)$, will be a bi-graded
differential algebra. An element $c\in W^{p, q}(A)$ is a sequence $c= (c_0, c_1, \ldots )$ of operators
that satisfy some compatibility relation. Before explaining what
each $c_i$ is, we want to emphasize that $c_0$ should be viewed as
the leading term of $c$, while the remaining terms $c_1, c_2,
\ldots$ should be viewed as correction terms for $c_0$. The leading
term $c_0$ is just an antisymmetric $\mathbb{R}$-multilinear map
\[ c_0: \underbrace{\Gamma(A)\times \ldots \times \Gamma(A)}_{p\ \textrm{times}} \rightarrow \Omega^q(M) .\]
As a general principle, the role of the higher order terms is to
measure the lack of $C^{\infty}(M)$-linearity of $c_0$. With this in
mind, one can often compute the higher terms from $c_0$.

\begin{definition}\label{def-W} A element in $W^{p, q}(A)$ is a sequence of operators $c= (c_0, c_1, \ldots )$
with
\[ c_i: \underbrace{\Gamma(A)\times \ldots \times\Gamma(A)}_{p-i\ \textrm{times}} \rightarrow \Omega^{q-i}(M; S^i(A^*)), \]
satisfying
\[ c_i(\alpha_1, \ldots, f\alpha_{p-i})= fc_i(\alpha_1, \ldots , \alpha_{p-i})- df \wedge \partial_{\alpha_{p-i}}(c_{i+1}(\alpha_1, \ldots , \alpha_{p-i-1})),\]
for all $f\in C^{\infty}(M)$, $\alpha_i\in\Gamma(A)$.
\end{definition}

Here we use the notation from the Appendix. In particular, for
$\alpha\in \Gamma(A)$, $\partial_{\alpha}: S^k(A^*)\rmap
S^{k-1}(A^*)$ is the partial derivative along $\alpha$. Also,
viewing elements of $\Omega(M; S(A^*))$ as polynomial functions on
$A$ with values in $\Lambda T^*M$, we use the notation:
\[ c_i(\alpha_1, \ldots , \alpha_{p-i}| \alpha):= c_i(\alpha_1, \ldots , \alpha_{p-i})(\alpha) \in \Omega(M) \ \ \ (\textrm{for}\ \alpha\in \Gamma(A)).\]

\begin{remark}\label{rk-body}\rm \
Suppose that  $c, c'$ are elements of  $\W^{p, q}(A)$. If $c_0=
c'_0$ then $c= c'$ provided $q\leq \textrm{dim}(M)$.
\end{remark}

\subsection{The DGA structure} We now discuss the differential graded algebra structure on
$W(A)$. First of all, as in the case of the
Weil algebra of a Lie algebra, the differential $d$ of $W(A)$ is a sum of two
differentials
\[ d= d^{\textrm{v}}+ d^{\textrm{h}}.\]

{\bf The vertical differential} $d^{\textrm{v}}$ increases $q$ and it is induced by the De
Rham differential on $M$ in the following sense. Given $c\in W^{p,
q}(A)$, the leading term of $d^{\textrm{v}}(c)$ is, up to a sign,
just the De Rham differential of the leading term of $c$:
\[ (d^{\textrm{v}}c)_0(\alpha_1, \ldots , \alpha_p| \alpha)= (-1)^pd_{DR}(c_0(\alpha_1, \ldots , \alpha_p| \alpha)).\]
The other components $(d^{\textrm{v}}c)_k$ ($k\geq 1$) can be found
by applying the general principle mentioned above, by looking at the
failure of $C^{\infty}(M)$-linearity. For instance, replacing
 $\alpha_{p}$ with $f\alpha_{p}$ in the previous formula, one finds the following formula for the next component
of $d^{\textrm{v}}c$:
\[ (d^{\textrm{v}}c)_1(\alpha_1, \dots ,\alpha_{p-1}| \alpha)= (-1)^{p-1}(d_{DR}(c_1(\alpha_1, \ldots , \alpha_{p-1}| \alpha))+ c_0(\alpha_1, \ldots , \alpha_{p-1}, \alpha| \alpha)).\]
Proceeding inductively, one can find the explicit formulas for all
the other components. The final result, which will be taken as the
complete definition of $(d^{\textrm{v}}c)$, is:
\[ (d^{\textrm{v}}c)_k(\alpha_1, \dots ,\alpha_{p-k}|\alpha)=
(-1)^{p-k} (d_{DR}(c_k(\alpha_1, \dots ,\alpha_{p-k}|\alpha))+
c_{k-1}(\alpha_1, \ldots, \alpha_{p-k}, \alpha| \alpha)).\]

{\bf The horizontal differential $d^{\textrm{h}}$} increases $p$ and, as above, it is induced
by the Koszul differential in the following sense. Given $c\in W^{p,
q}(A)$, the leading term of $d^{\textrm{h}}(c)$ is given by the
Koszul differential of the leading term of $c$, where we use
$\Omega(M; SA^*)$ as a representation of the Lie algebra $\Gamma(A)$
(see the Appendix):
\begin{eqnarray*}
(d^{\textrm{h}}c)_0(\alpha_1, \dots ,\alpha_{p+1})&=& \sum_{i<j}(-1)^{i+j} c_{0}
([\alpha_i,\alpha_j],\dots,\hat{\alpha_i},\dots,\hat{\alpha_j},\dots,\alpha_{p+1})+\\
&+&\sum_i(-1)^{i+1}L_{\rho(\alpha_i)}(c_{0}(\alpha_1,\dots,\hat{\alpha_i},\dots,\alpha_{p+1})).
\end{eqnarray*}
As above, replacing $\alpha_{p+1}$ with $f\alpha_{p+1}$ and applying
the general principle, one finds the formula for the next component
of $d^{\textrm{h}}c$:
\[ (d^{\textrm{h}}c)_1(\alpha_1, \dots ,\alpha_{p}| \alpha)= \delta(c_1)(\alpha_1, \ldots , \alpha_{p}|\alpha)+ (-1)^{p-1} i_{\rho(\alpha)}c_0(\alpha_1, \ldots , \alpha_{p}).\]
Proceeding inductively, one finds the explicit formulas for all the
other components. The final result, which will be taken as the
complete definition of $(d^{\textrm{h}}c)$, is:
\[ (d^{\textrm{h}}c)_k(\alpha_1, \dots ,\alpha_{p-k+1}|\alpha)= \delta(c_k)(\alpha_1, \dots ,\alpha_{p-k+1}|\alpha)+ (-1)^{p-k}i_{\rho(\alpha)}c_{k-1}(\alpha_1, \ldots , \alpha_{p-k+1}|\alpha).\]

\begin{remark}\rm \  Our signs were chosen so
that they coincide with the standard ones for Lie algebra actions.
Admittedly, they do not look very natural.
\end{remark}

{\bf The algebra structure} on $W(A)$ is the following. Given $c\in W^{p, q}(A)$, $c'\in W^{p',
q'}(A)$ we describe $cc'\in W^{p+p', q+q'}(A)$. The leading
term is
\[ (cc')_0(\alpha_1, \ldots , \alpha_{p+p'}| \alpha)= (-1)^{q p'}\sum \textrm{sgn}(\sigma) c_{0}(\alpha_{\sigma(1)}, \ldots, \alpha_{\sigma(p)}| \alpha) c^{'}_{0}(\alpha_{\sigma(p+ 1)}, \ldots, \alpha_{\sigma(p+ p')}| \alpha) ,\]
where the sum is over all $(p, p')$-shuffles. The other components
can be deduced, again, by applying the general principle we have
already used. The general formula also follows from the relation
with the Kalkman's BRST algebra (Proposition \ref{rel-Kal} below).

\begin{theorem} $d^{\textrm{v}}$, $d^{\textrm{h}}$ and the product are well defined, $d^{\textrm{v}}$, $d^{\textrm{h}}$ are derivations
and
\[ d^{\textrm{v}}d^{\textrm{v}}= 0, d^{\textrm{h}}d^{\textrm{h}}= 0, d^{\textrm{v}}d^{\textrm{h}}+ d^{\textrm{h}}d^{\textrm{v}}= 0.\]
In conclusion, $W(A)$ becomes a bigraded bidifferential algebra.
\end{theorem}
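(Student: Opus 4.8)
The plan is to reduce everything to two handles provided in the text: Remark~\ref{rk-body}, which says that an element of $W^{p,q}(A)$ is pinned down by its leading term $c_0$ as soon as $q\leq\dim(M)$, and the relation with Kalkman's BRST algebra promised in Proposition~\ref{rel-Kal}, which realizes $W(A)$ inside a DG-algebra where the structure operations are transparent. The formulas defining $d^{\textrm{v}}$, $d^{\textrm{h}}$ and the product are prescriptions on the leading term, with the higher components $c_1,c_2,\dots$ forced by the ``general principle'' of measuring the failure of $C^{\infty}(M)$-linearity; so the natural strategy is to verify each assertion first on leading terms and then to propagate it to the full sequences.

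First I would check \emph{well-definedness}, i.e.\ that the sequences $(d^{\textrm{v}}c)$, $(d^{\textrm{h}}c)$ and $(cc')$ actually satisfy the compatibility relation of Definition~\ref{def-W}. This is the genuinely computational part. For $d^{\textrm{v}}$ one substitutes $f\alpha_{p-k}$ into the explicit formula for $(d^{\textrm{v}}c)_k$, expands using the Leibniz rule for $d_{DR}$ together with the compatibility relation for $c$, and recognizes the resulting correction term as $-\,df\wedge\partial_{\alpha_{p-k}}\bigl((d^{\textrm{v}}c)_{k+1}\bigr)$; the terms organize exactly because the higher components were built for this purpose. The same scheme handles $d^{\textrm{h}}$ — here one also uses that $\partial_{\alpha}$ is a derivation of $S(A^*)$ and tracks the interaction of $i_{\rho(\alpha)}$ with the Koszul differential $\delta$ — and the product, where the shuffle sum and the sign $(-1)^{qp'}$ must be carried through (associativity and graded-commutativity of the product, needed for the final ``bigraded algebra'' conclusion, follow the same way from the shuffle structure).

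For the \emph{derivation property} and the three differential identities I would pass to the BRST model. Under the Mathai--Quillen-type isomorphism of Proposition~\ref{rel-Kal}, the operators $d^{\textrm{v}}$, $d^{\textrm{h}}$ and the product correspond to operations assembled from the honest de Rham differential on $M$, the interior products and Lie derivatives $i_{\rho(\alpha)}, L_{\rho(\alpha)}$, and the free graded-commutative algebra structure; for these the Leibniz rule and the relations $(d^{\textrm{v}})^2=0$, $(d^{\textrm{h}})^2=0$, $d^{\textrm{v}}d^{\textrm{h}}+d^{\textrm{h}}d^{\textrm{v}}=0$ are either standard or manifest, and restrict to $W(A)$. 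Alternatively, without invoking the appendix, one checks the three identities on leading terms: there $(d^{\textrm{v}}c)_0=\pm\, d_{DR}(c_0)$, so $(d^{\textrm{v}})^2=0$ is $d_{DR}^2=0$; $(d^{\textrm{h}}c)_0$ is the Chevalley--Eilenberg/Koszul differential of $\Gamma(A)$ with coefficients in the representation $\Omega(M;SA^*)$, so $(d^{\textrm{h}})^2=0$ is the Jacobi identity for the algebroid bracket together with the representation axioms; and the cross term $d^{\textrm{v}}d^{\textrm{h}}+d^{\textrm{h}}d^{\textrm{v}}$ is a sign computation. Remark~\ref{rk-body} then upgrades each leading-term identity to an identity in $W^{p,q}(A)$ whenever $q\leq\dim(M)$, and since all the defining formulas are natural in $M$ and independent of $\dim(M)$, the degree restriction is removed by stabilizing $M$ to $M\times\mathbb{R}^N$.

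The main obstacle is the bookkeeping of the higher components together with the signs. Verifying $(d^{\textrm{v}})^2=0$ and its companions \emph{on leading terms} is essentially classical; the real content is that the single inductive principle generating $c_1,c_2,\dots$ is self-consistent and simultaneously compatible with all three operations, so that, for instance, the leading-term vanishing of $d^{\textrm{v}}d^{\textrm{h}}+d^{\textrm{h}}d^{\textrm{v}}$ truly forces the whole anticommutator to vanish. It is exactly here that the BRST description earns its keep, since it supplies one ambient DG-algebra in which coherence of the correction terms is automatic rather than something to be checked component by component; I would therefore treat the BRST route as the primary argument and the leading-term computation as a cross-check.
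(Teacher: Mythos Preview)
Your primary argument is exactly the paper's: the theorem is stated as a direct consequence of Proposition~\ref{rel-Kal}, with the well-definedness computation (showing that $d^{\textrm{v}}$ and $d^{\textrm{h}}$ preserve the subspace $W(A)\subset W(\mathfrak{g}_A;\Omega(M))$) carried out in the proof of that proposition and the differential identities inherited from the ambient Kalkman algebra described in the appendix. Your alternative leading-term-plus-stabilization route is not in the paper at this point, though the same dimension-raising trick does appear later in the proof of the Van Est compatibility; one small correction is that Proposition~\ref{rel-Kal} gives an \emph{inclusion} into the infinite-dimensional Kalkman algebra rather than a Mathai--Quillen isomorphism, but this does not affect your argument.
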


\begin{proof} The statement is a consequence of Proposition \ref{rel-Kal}.
\end{proof}

In order to shed some light into the formulas, we point out the
relationship with the infinite dimensional version of Kalkman's BRST
algebra (see the Appendix). We will consider $W(\mathfrak{g};
\Omega(M))$ applied to the Lie algebra
\[ \mathfrak{g}_A:= \Gamma(A)\]
acting on $M$ via the anchor map. We will use the canonical
inclusion
\[ W(A)\hookrightarrow W(\mathfrak{g}_A; \Omega(M))\]
to realize $W(A)$ as a subspace of Kalkman's complex. From the
explicit formulas, we deduce the following:

\begin{proposition}\label{rel-Kal}
The algebra $W(A)$ is a sub-algebra of $W(\mathfrak{g}_A;
\Omega(M))$. Moreover, the horizontal and the vertical differentials
of $W(\mathfrak{g}_A; \Omega(M))$ restrict to $W(A)$ and coincide
with the ones defined above.
\end{proposition}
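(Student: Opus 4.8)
The plan is to realize $W(A)$ inside Kalkman's complex $W(\mathfrak{g}_A;\Omega(M))$ by decomposing a Kalkman cochain according to its symmetric (``curvature'') degree, and then to reduce every assertion — closure under the product and under the two differentials, together with the agreement of the restricted operators with the formulas of Section \ref{the Weil algebra} — to a comparison of leading terms via Remark \ref{rk-body}. First I would fix the bookkeeping. Using the explicit model of the appendix, a homogeneous element of $W(\mathfrak{g}_A;\Omega(M))$ of bidegree $(p,q)$ is a family of $\mathbb{R}$-multilinear maps indexed by the symmetric degree $i$, the $i$-th being antisymmetric in $p-i$ arguments, of polynomial degree $i$ in the curvature variable, and of De Rham degree $q-i$ on $M$; this is precisely the datum of a sequence $(c_0,c_1,\dots)$ with $c_i\colon \Gamma(A)^{\times(p-i)}\to \Omega^{q-i}(M;S^i(A^*))$ as in Definition \ref{def-W}, except that a priori no relation is imposed between the $c_i$. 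Thus $W(A)$ is literally the linear subspace of $W(\mathfrak{g}_A;\Omega(M))$ cut out by the Leibniz-type relations of Definition \ref{def-W}, and this is the canonical inclusion. As a sanity check, when $M$ is a point only the component with $q-i=0$ survives, so $c=c_q\in S^q(\mathfrak{g}^*)\otimes \Lambda^{p-q}(\mathfrak{g}^*)$ and one recovers the classical bigraded Weil algebra.

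Next I would translate the three Kalkman operations into this component language and match their leading ($i=0$) terms. The product is the shuffle product of the exterior variables twisted by the polynomial and De Rham degrees, whose $i=0$ component is exactly the shuffle formula of Section \ref{the Weil algebra}. The Kalkman differential $d=d_W\otimes 1+1\otimes d_{DR}$ together with the correction terms $\theta^a\otimes L_{e^a}$ and $-\omega^a\otimes i_{e^a}$ splits into a vertical part, namely $1\otimes d_{DR}$ together with the part of $d_W$ raising the symmetric degree, and a horizontal part, namely the Koszul part of $d_W$, the action term $\theta^a\otimes L_{e^a}$, and the contraction term $-\omega^a\otimes i_{e^a}$. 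Computing the $i=0$ components of these restrictions reproduces, on the nose, the leading-term formulas $(d^{\mathrm v}c)_0=(-1)^p d_{DR}(c_0)$ and the Koszul formula for $(d^{\mathrm h}c)_0$ displayed in Section \ref{the Weil algebra}; this is a direct unwinding of definitions and is where the signs get fixed.

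The conceptual engine is then Remark \ref{rk-body}: an element of $W^{p,q}(A)$ is determined by its leading term $c_0$, at least when $q\leq \dim M$, the remaining high-degree cases being handled by the same argument applied to the first non-vanishing component since $\Omega^{>\dim M}(M)=0$. Consequently, to prove that a Kalkman operation restricted to $W(A)$ coincides with the operation of Section \ref{the Weil algebra}, it suffices to check two things: that the output again lies in $W(A)$, and that the two outputs have equal leading terms. The leading terms agree by the previous paragraph, because the operators of Section \ref{the Weil algebra} were \emph{defined} by prescribing exactly these leading terms and then extending so as to land in $W(A)$.

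Hence the whole statement reduces to the single point I expect to be the main obstacle, namely \textbf{closure}: that the Kalkman product and the Kalkman differentials send sequences satisfying the relations of Definition \ref{def-W} to sequences satisfying the same relations. I would verify this directly at the level of components, substituting $f\alpha_{p-i}$ into the component formulas for $cc'$, $d^{\mathrm v}c$ and $d^{\mathrm h}c$ and using that each input $c_j$ obeys its own Leibniz relation, that $d_{DR}$ is a derivation with $d_{DR}(df\wedge\eta)=-df\wedge d_{DR}(\eta)$, that $\partial_\alpha$ interacts compatibly with the contraction $i_{\rho(\alpha)}$ and the Koszul differential $\delta$, and that the shuffle product is $C^\infty(M)$-bilinear up to the same $df\wedge\partial_\alpha$-correction. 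Each check is routine but bookkeeping-heavy; the only genuine care is matching the $df\wedge\partial_\alpha$ correction produced on the left with the one demanded on the right, which is exactly what forces the particular signs in the formulas of Section \ref{the Weil algebra}. Once closure is established, Remark \ref{rk-body} upgrades the leading-term agreement to full agreement, the product closure gives the subalgebra statement, the differentials restrict as claimed, and the identities $d^{\mathrm v}d^{\mathrm v}=0$, $d^{\mathrm h}d^{\mathrm h}=0$ and $d^{\mathrm v}d^{\mathrm h}+d^{\mathrm h}d^{\mathrm v}=0$ of the preceding theorem are inherited for free from the corresponding identities in $W(\mathfrak{g}_A;\Omega(M))$.
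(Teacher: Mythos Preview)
Your proposal is correct and agrees with the paper on the substantive point: the real work is the closure check, i.e.\ verifying that the product and the two Kalkman differentials send sequences satisfying the Leibniz relation of Definition~\ref{def-W} back to such sequences. Both you and the paper do this by direct substitution of $f\alpha$ into the component formulas; the paper carries out the computation for $d^{\textrm v}$ exactly as you outline, and for $d^{\textrm h}$ it isolates the key identity describing how the Koszul piece $\delta(c_k)$ behaves under $\alpha_{p-k+1}\mapsto f\alpha_{p-k+1}$.

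Where you diverge is in the ``agreement'' step. You route it through Remark~\ref{rk-body}: show closure, match leading terms, and invoke uniqueness (with a side argument for $q>\dim M$). The paper takes a shorter path. Section~\ref{the Weil algebra} does not merely prescribe the leading terms and then extend; it writes down explicit formulas for \emph{every} component $(d^{\textrm v}c)_k$ and $(d^{\textrm h}c)_k$ and declares those to be the definition. Those formulas are, by inspection, identical to the component formulas of Kalkman's $d^{\textrm v}$ and $d^{\textrm h}$ from the Appendix restricted to $W(A)$. So once closure is established there is nothing left to compare, and Remark~\ref{rk-body} is not needed. Your detour is logically sound but unnecessary, and it also leans on the basis-dependent description $\theta^a\otimes L_{e^a}$, $-\omega^a\otimes i_{e^a}$, which strictly speaking is only a mnemonic here since $\mathfrak{g}_A=\Gamma(A)$ is infinite-dimensional; the paper works with the intrinsic formulas of the Appendix throughout.
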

\begin{proof}
As a vector space $W(A)$ lives inside of $W(\mathfrak{g}_A;
\Omega(M))$ and a simple computation shows that it is closed under
the product. The explicit formulas for the differentials in the two
algebras clearly coincide on $W(A)$. Thus one only needs to show
that the differentials preserve the $W(A)$. For the vertical
differential we compute:
\begin{eqnarray*}
(d^{\textrm{v}}c)_k(\alpha_1, \dots ,f\alpha_{p-k}|\alpha)&=&
(-1)^{p-k} (d_{DR}(c_k(\alpha_1, \dots ,\alpha_{p-k}|\alpha))\\
&&+c_{k-1}(\alpha_1, \ldots, f\alpha_{p-k}, \alpha| \alpha))\\
&=& (-1)^{p-k} (fd_{DR}(c_k(\alpha_1, \dots ,\alpha_{p-k}|\alpha))\\
&&+df\wedge (c_k(\alpha_1, \dots ,\alpha_{p-k}|\alpha))\\
&&+df \wedge
d_{DR}(\partial_{\alpha_{p-k}}c_{k+1}(\alpha_1,\dots,\alpha_{p-k-1}|\alpha))\\
&&+ fc_{k-1}(\alpha_1, \ldots, \alpha_{p-k}, \alpha| \alpha)\\
&& +df \wedge
\partial{\alpha_{p-k}}(c_k(\alpha_1,\dots,\alpha_{p-k-1},\alpha|
\alpha)))\\
&=&f(d^{\textrm{v}}c)_k(\alpha_1, \dots ,f\alpha_{p-k}|\alpha)\\
&&- df\wedge
(\partial_{\alpha_{p-k}}(d^{\textrm{v}}c)_{k+1}(\alpha_1,\dots,\alpha_{p-k-1}|\alpha)).
\end{eqnarray*}
The fact that the horizontal differentials preserves $W(A)$ follows
by a similar computation once we observe that

\begin{eqnarray*}
\delta(c_k)(\alpha_1,\dots,f\alpha_{p-k+1}|\alpha)&=&
f \delta(c_k)(\alpha_1,\dots,\alpha_{p-k+1}|\alpha)\\
&&-df\wedge
\partial_{\alpha_{p-k+1}}\delta(c_{k+1})(\alpha_1,\dots,\alpha_{p-k}|\alpha)\\
&&+(-1)^{p-k+1}df\wedge
i_{\rho(\alpha_{p-k+1})}c_k(\alpha_1,\dots,\alpha_{p-k}|\alpha).
\end{eqnarray*}

\end{proof}

\begin{remark}\rm \ The previous proposition can be taken as a definition of the differentials and the product on $W(A)$.
The converse is more interesting, Kalkman's formulas can be
recovered  from the De Rham and Koszul differentials by computing
the higher order terms.
\end{remark}

\begin{example} \rm \ When $A= \mathfrak{g}$ is a Lie algebra
one recovers the usual Weil algebra. Also, when $A=
\mathfrak{g}\ltimes M$, one recovers Kalkman's differentials.
\end{example}

\subsection{The Weil algebra in local coordinates} \rm \ Since all the operators involved are
local, it is possible to describe $W(A)$ in coordinates.

\begin{definition}
Let $(x_a)$ be local coordinates in a chart for $M$ on which there is a
trivialization $(e_i)$ of the vector bundle $A$. Over this chart, we
obtain the following algebra $W_{\textrm{flat}}(A)$. As a bigraded
algebra, it is the commutative bigraded algebra over the space of smooth functions
generated by elements $\partial^a$ of bidegree $(0, 1)$, elements
$\theta^i$ of bi-degree $(1, 0)$ and elements $\mu^i$ of bi-degree
$(1, 1)$.
\end{definition}
There is an isomorphism between $W_{\textrm{flat}}(A)$ and $W(A)$, over the trivializing chars, given by:
\begin{enumerate}
\item $\partial^a$ to $dx_a\in \Omega^1(M)= W^{0, 1}(A)$.
\item  $\theta^i$ to the duals of $e_i$, viewed as elements in $\Gamma(\Lambda^1A^*)= W^{1, 0}(A)$.
\item $\mu^i$ to the elements $\widehat{\mu}^i\in W^{1, 1}(A)$, where
$\widehat{\mu}^{i}$ is determined by the fact that
$\widehat{\mu}^{i}_{0}$ vanishes on the $e_i$'s, while
$\widehat{\mu}^{i}_{1}$ is the dual of $e_i$, viewed as an element
of $\Gamma(S^1A^*)$.
\end{enumerate}
The  map $W_{\textrm{flat}}(A)\rmap W(A)$ is an isomorphism. The
differentials can now be computed explicitly on generators and one
finds (compare with \cite{GS}):
\begin{eqnarray*}
d_{\textrm{flat}}^{\textrm{v}}(\partial^a) & =&  0,\\
d_{\textrm{flat}}^{\textrm{v}}(\theta^i) & =& \mu^i,\\
d_{\textrm{flat}}^{\textrm{v}}(\mu^i)&=& 0,\\
d_{\textrm{flat}}^{\textrm{h}}(\partial^a)&=& -  \rho^{a}_{i} \mu^i+  \frac{\partial \rho^{a}_{i}}{\partial x_b}\theta^i\partial^b,\\
d_{\textrm{flat}}^{\textrm{h}}(\theta^i)&=& - \frac{1}{2} c^{i}_{jk} \theta^j\theta^k,\\
d_{\textrm{flat}}^{\textrm{h}}(\mu^i)&=& -  c_{jk}^{i}\theta^j\mu^k+
\frac{1}{2}\frac{\partial c_{jk}^{l}}{\partial x_a}
\theta^j\theta^k\partial^a,
\end{eqnarray*}
where we use the Einstein summation convention, $\rho^{a}_{i}$ are
the coefficients of $\rho$ and $c^{i}_{jk}$ are the structure
functions of $A$. Namely,
\[ \rho(e_i)= \sum \rho^{a}_{i}\partial_a,\ \ [e_j, e_k]= \sum c^{i}_{jk} e_i.\]
Note that, on smooth functions:
\[d_{\textrm{flat}}^{\textrm{v}}(f)= \partial_{a}(f) \partial^a, \ d_{\textrm{flat}}^{\textrm{h}}(f)= \partial_a(f)\rho_{i}^{a}\theta^i.\]

\subsection{The Weil algebra using a connection} \rm \ A global version of the previous
remark is possible with the help of a connection $\nabla$ on the
vector bundle $A$ and produces a version $W_{\nabla}(A)$ of $W(A)$
depending on $\nabla$. As a bigraded algebra it is just:
\[ W^{p, q}_{\nabla}(A)= \bigoplus_{k} \Gamma(\Lambda^{q-k}T^*M\otimes S^{k}(A^*)\otimes \Lambda^{p-k}(A^*)).\]
However, the associated operators $d^{\textrm{h}}_{\nabla}$,
$d^{\textrm{v}}_{\nabla}$ acting on $W_{\nabla}(A)$ are more
involved and are computed  in \cite{Ari-Cra1}. Working with the
global $\nabla$, one can write down the explicit local formulas  for
$d^{\textrm{h}}_{\nabla}$, $d^{\textrm{v}}_{\nabla}$ on generators.
The resulting equations will be similar to the ones for
$W_{\textrm{flat}}(A)$, but they have rather non-trivial extra-terms
which involve the coefficients of the connection and two types of
curvature tensors. The explicit map
\[ I_{\nabla}: W_{\nabla}(A)\rmap W(A),\]
is defined as follows. It is the unique algebra map which is
$C^{\infty}(M)$-linear and has the properties:
\begin{itemize}
\item on $\Omega(M)$ and $\Gamma(\Lambda A^*)$, which are subspaces of both $W_{\nabla}(A)$ and $W(A)$,
$I_{\nabla}$ is the identity.
\item for $\xi\in \Gamma(S^1A^*)$, $I_{\nabla}(\xi)= \widehat{\xi}$, where
\[ \widehat{\xi}_0(\alpha)= -\xi(\nabla (\alpha)), \widehat{\xi}_1= \xi.\]
\end{itemize}

\begin{proposition}\label{algebras are the same}
$I_{\nabla}$ is an isomorphism of bigraded algebras.
\end{proposition}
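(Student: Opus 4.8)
The plan is to exploit the fact that, as a bigraded-commutative algebra over $C^{\infty}(M)$, $W_{\nabla}(A)$ is \emph{free} on the three spaces of generators $\Omega^1(M)$, $\Gamma(\Lambda^1 A^*)$ and $\Gamma(S^1 A^*)$ (of bidegrees $(0,1)$, $(1,0)$ and $(1,1)$ respectively), and then to run a filtration/associated-graded argument with respect to the symmetric degree, so that the isomorphism becomes a formal consequence of $I_{\nabla}$ being "the identity on leading symbols''.

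First I would settle existence and well-definedness of $I_{\nabla}$. Since $W_{\nabla}(A)$ is free, a $C^{\infty}(M)$-linear algebra map is completely determined by its values on generators, and it exists as soon as these values satisfy the graded-commutativity relations of the target $W(A)$. Two checks are needed. First, the prescribed image $\widehat{\xi}$ of $\xi\in\Gamma(S^1A^*)$ is genuinely an element of $W^{1,1}(A)$: this is a one-line verification of the compatibility relation of Definition \ref{def-W}, using the Leibniz rule $\nabla(f\alpha)=f\nabla\alpha+df\otimes\alpha$ together with $\partial_{\alpha}\xi=\xi(\alpha)$. Second, the images of the generators must graded-commute; for this I invoke Proposition \ref{rel-Kal}, which realizes $W(A)$ as a subalgebra of the Kalkman algebra $W(\mathfrak{g}_A;\Omega(M))$, a graded-commutative algebra, so all required relations hold automatically. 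As $\xi\mapsto\widehat{\xi}$ is visibly $C^{\infty}(M)$-linear, $I_{\nabla}$ exists and is unique.

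Next I would filter both algebras by symmetric degree. On $W_{\nabla}(A)$ this is the existing grading by $k$, with $\mathrm{gr}_k W^{p,q}_{\nabla}(A)=\Gamma(\Lambda^{q-k}T^*M\otimes S^k A^*\otimes\Lambda^{p-k}A^*)$. On $W(A)$ I set $F_k W^{p,q}(A)=\{c : c_i=0 \text{ for } i>k\}$; for such a $c$ with $c_{k+1}=0$, the compatibility relation of Definition \ref{def-W} forces the top component $c_k$ to be $C^{\infty}(M)$-multilinear, hence (being antisymmetric) a genuine tensor in $\Gamma(\Lambda^{q-k}T^*M\otimes S^k A^*\otimes\Lambda^{p-k}A^*)$, the \emph{same} space as $\mathrm{gr}_k W_{\nabla}(A)$. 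Under the embedding of Proposition \ref{rel-Kal} the symmetric degree is the polynomial ($\mu$-)degree, which is multiplicative, so the product respects these filtrations and the top symbol of a product is the product of top symbols. Since $I_{\nabla}$ is an algebra map and each generator maps to an element with the same top symbol (the identity on $\Omega^1(M)$ and $\Gamma(\Lambda^1 A^*)$, while $\xi$ maps to $\widehat{\xi}$ with top symbol $\widehat{\xi}_1=\xi$), it follows that $I_{\nabla}$ is filtered and induces the identity on each associated-graded piece.

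Bijectivity is then formal, the filtration being bounded ($0\le k\le\min(p,q)$). For injectivity, if $I_{\nabla}(x)=0$ with $x\neq 0$ of top symmetric degree $k$, the degree-$k$ component of $I_{\nabla}(x)$ is exactly the top symbol $x_k\neq 0$, a contradiction. For surjectivity I argue by downward induction on the top degree: given $c\in W^{p,q}(A)$ with top component $c_k$, I lift the tensor $c_k$ to the pure-degree-$k$ element $\tilde{x}:=c_k\in W_{\nabla}(A)$; then $c-I_{\nabla}(\tilde{x})$ has vanishing degree-$k$ component, hence lies in $F_{k-1}$, and the induction closes (base case $c=0$). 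I expect the only delicate point to be the bookkeeping behind "the product respects the symmetric filtration and multiplies top symbols'', where the shuffle sign $(-1)^{qp'}$ of the product formula intervenes; but this only affects the identification of the graded pieces up to isomorphism, not the conclusion that $\mathrm{gr}(I_{\nabla})$ is an isomorphism, so the argument goes through.
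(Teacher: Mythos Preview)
Your proof is correct and takes a genuinely different route from the paper's. The paper argues locally: it views $I_{\nabla}$ as a map of sheaves, passes to a trivializing chart, and uses the local generators $\partial^a,\theta^i,\mu^i$ of the earlier flat description $W_{\textrm{flat}}(A)$. Injectivity is obtained by observing that the composite $W_{\nabla}(A)\to W(A)\hookrightarrow W(\mathfrak{g}_A;\Omega(M))$ sends these generators to $\partial^a,\theta^i,\widehat{\mu}^i$, which are algebraically independent in the (free graded-commutative) Kalkman algebra; surjectivity is deduced from the fact that $W(A)$ is generated in low bidegrees, where $I_{\nabla}$ is visibly bijective.

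Your argument, by contrast, is global and structural: you filter both sides by symmetric degree and show that $I_{\nabla}$ is upper-triangular with identity on the associated graded. The key observations you isolate---that the top component $c_k$ of a Weil element with $c_{k+1}=0$ is a genuine tensor, and that the symmetric degree is multiplicative under the Kalkman embedding---are exactly what makes this work, and they replace the paper's reliance on the explicit local isomorphism $W_{\textrm{flat}}(A)\cong W(A)$. What you gain is a coordinate-free proof that does not invoke the earlier flat model; what the paper's local argument gains is concreteness (and it simultaneously checks compatibility with the differentials on generators, which is outside the scope of this proposition). Both approaches ultimately rest on the same ``triangular'' phenomenon: $\widehat{\xi}$ differs from the symmetric generator $\xi$ only by a term of lower symmetric degree.
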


\begin{proof}
 We view $I_{\nabla}$ as a map of sheaves. It suffices to
show that $I_{\nabla}$ is an isomorphism locally. We then use the
generators $\partial^a$, $\theta^i$ and $\mu^i$ as above. These
elements also belong to the Kalkman algebra $W(\mathfrak{g}_A,
\Omega(M))$ and the map $I_{\nabla}$ can be seen as a map from
$W_{\nabla}(A)$ into $W(\mathfrak{g}_A, \Omega(M))$ which leaves
$\partial^a$ and $\theta^i$ invariant, but which sends $\mu^i$ into
$\widehat{\mu}^i$. Since the Kalkman algebra is free commutative and
the map is injective on the generators, we conclude that
$I_{\nabla}$ is injective. Surjectivity is a consequence of the fact
that $W(A)$ is generated by the elements in $W^{0,0}(A)$,
$W^{1,0}(A)$, $W^{0,1}(A)$ and the map $I_{\nabla}$ is clearly a
bijection in those degrees. Finally, since the differentials are
derivations, it is enough to prove that they coincide in low degree,
and this is a simple check.
\end{proof}

\subsection{The Weil algebra and the adjoint representation} \rm \ Let us give now a short
summary of our paper \cite{Ari-Cra1} and explain the connection with
the Weil algebra. In order to be able to talk about the adjoint
representation of a Lie algebroid, one has to enlarge the category
$\textrm{Rep}(A)$ of (standard) representations and work in the
category $\textrm{Rep}^{\infty}(A)$ of representations up to
homotopy. Such representations, by their nature, serve as
coefficients for the cohomology of $A$. Underlying any object of
$\textrm{Rep}^{\infty}(A)$ there is a cochain complex $(E,
\partial)$ of vector bundles over $A$; the extra-structure present
on $(E, \partial)$ is a linear operation of $A$ on $E$, which is not
quite an action- but the failure is precisely measured and there are
higher and higher coherence conditions. For instance, for the
adjoint representation, the underlying complex is:
\begin{equation}\label{adjoint}
A\stackrel{\rho}{\rmap} TM
\end{equation}
with $A$ in degree zero, $TM$ in degree one and zero otherwise.
However, to give this complex the structures of a representation up
to homotopy, one needs to use a connection $\nabla$ on the vector
bundle $A$. The resulting object $\textrm{Ad}_{\nabla}\in
\textrm{Rep}^{\infty}(A)$ does not depend on $\nabla$ up to
isomorphism. Its isomorphism class is denoted by $\textrm{Ad}$. This
indicates in particular that the resulting cohomologies with
coefficients in $\textrm{Ad}_{\nabla}$ or other associated
representations (such as symmetric powers, duals etc) do not depend
on $\nabla$ and can be computed by an intrinsic complex. The rows
$(W^{\bullet, q}(A), d^{\textrm{h}})$ of the Weil algebra are the
intrinsic complexes computing the cohomology of $A$  with
coefficient in $S^q(\textrm{Ad}^*)$:
\begin{equation}
\label{Sadjoint} H(W^{\bullet, q}(A))\cong H(A; S^q(\textrm{Ad}^*)).
\end{equation}
From this description it immediately follows that the cohomology of
$W(A)$ is isomorphic to the cohomology of $M$- which should be
expected because the fibers of the map $EG\rmap M$ are contractible.

\begin{example}[Multiplicative forms] \rm \ Closed multiplicative forms on
groupoids are related to homogeneous cocycles of the Weil algebra. To
illustrate this, let $A$ be the Lie algebroid of a Lie groupoid $G$
over $M$. Then, any $2$-form $\omega\in \Omega^2(G)$ induces an
element $c\in W^{1, 2}(A)$ with leading term
\[ c_0: \Gamma(A)\rmap \Omega^2(M), c_0(|\alpha )= L_{\alpha}(\omega)|_{M},\]
where $\alpha\in \Gamma(A)$ is identified with the induced right
invariant vector field on $G$ and we use the inclusion
$M\hookrightarrow G$ as units. The other component, $c_1\in
\Omega^1(M; S^1A^*)$, is given by
\[ c_1( |\alpha)= -i_{\alpha}(\omega)|_{M}.\]
When $\omega$ is closed $c$ is $d^{\textrm{v}}$ closed and when
$\omega$ is multiplicative $c$ is $d^{\textrm{h}}$-closed. This is
an instance of the Van Est map that will be explained in the next
section.
\end{example}

\begin{example}[IM forms] \rm \
In turn, $(1, 2)$ cocycles on the Weil algebra of a Lie algebroid
$A$ can be identified with the IM forms on $A$ (see Example \ref{IM}
in the case when $\phi= 0$). To see this, we first remark that an
element $c\in W^{1, 2}(A)$ which is $d^{\textrm{v}}$-closed is
uniquely determined by its component $c_1$, which we interpret as a
bundle map $A\rmap T^*M$ as before and denote it by $\sigma$.
Indeed,  the condition $(d^{\textrm{v}}c)_1= 0$ gives us
\[ c_0(|\alpha  )= -d_{DR}(\sigma(\alpha) ).\]
If $c$ is also $d^{\textrm{h}}$-closed  one has in particular that
$(d^{\textrm{h}}c)_2= 0$ and $(d^{\textrm{h}}c)_1= 0$. These two
conditions coincide with the conditions for $\sigma$ to be an IM
form (see Example \ref{IM}). One can check directly that,
conversely, these conditions also imply $(d^{\textrm{h}}c)_0= 0$.

As a conclusion of the last two examples, the correspondence between
multiplicative two-forms on groupoids and $IM$-forms on algebroids
described in Example (\ref{IM}) factors through the Weil algebra.
This will be generalized to arbitrary forms on the nerve of $G$ in
the next section. We will show that the main result of \cite{BUR}
can be derived as a consequence of a general Van Est isomorphism
theorem.
\end{example}

We now explain a version of the Weil algebra with coefficients which
will be used in the proof of our main theorem. The coefficients are
the generalizations of $\mathfrak{g}$-DG algebras to the context of
Lie algebroids.

\begin{definition} Given a Lie algebroid $A$ over $M$, an $A$-DG algebra
is a $DG$-algebra $(\mathcal{A}, d)$ together with
\begin{itemize}
\item a structure of $\Gamma(A)$-DG algebra, with Lie derivatives and
interior products denoted by $L_{\alpha}$ and $i_{\alpha}$,
respectively.
\item a graded multiplication $\Omega(M)\otimes \mathcal{A}\rmap \mathcal{A}$ which makes
$(\mathcal{A}, d)$ a DG-algebra over the De Rham algebra $\Omega(M)$
and which is compatible with $L_{\alpha}$ and $i_{\alpha}$ (i.e. it
is a map of $\Gamma(A)$-DG algebras)
\end{itemize}
such that
\[ i_{f\alpha}(a)= f i_{\alpha}(a), \ \ L_{f\alpha}(a)= fL_{\alpha}(a)+ (df) i_{\alpha}(a),\]
for all $\alpha\in \Gamma(A)$, $f\in C^{\infty}(M)$, $a\in
\mathcal{A}$.
\end{definition}

Given such an $A$-DG algebra, we define $W(A; \mathcal{A})$ as
follows. An element $c\in W^{p, q}(A; \mathcal{A})$ is a sequence
$(c_0, c_1, \ldots )$ where
\[ c_i: \underbrace{\Gamma(A)\times \ldots \times \Gamma(A)}_{p-i\ \textrm{times}}\times
\underbrace{\Gamma(A)\times \ldots \times \Gamma(A)}_{i\
\textrm{times}} \rightarrow \mathcal{A}^{q-i} ,\]
\[ (\alpha_1, \ldots , \alpha_{p-i}, \alpha_{p-i+1}, \ldots , \alpha_p)\mapsto c_i
(\alpha_1, \ldots , \alpha_{p-i}| \alpha_{p-i+1}, \ldots
,\alpha_{p}) \] is $\mathbb{R}$-multilinear and antisymmetric on
$\alpha_1, \ldots , \alpha_{p-i}$ and is $C^{\infty}(M)$-multilinear
and symmetric on $\alpha_{p-i+1},$ $\ldots$ , $\alpha_p$; moreover,
$c_i$ and $c_{i+1}$ are required to be related as in Definition
\ref{def-W}. As before, $W(A; \mathcal{A})$ sits inside Kalkman's
$W(\mathfrak{g}_A; \mathcal{A})$ and we use this inclusion to induce
the algebra structure and the two differentials on $W(A;
\mathcal{A})$.

\begin{example} \label{ex-ADG} The basic example of an $A$-DG algebra is the De Rham complex of $M$, in which case we recover $W(A)$. More generally,
if $A$ acts on a space $P\stackrel{\mu}{\rmap} M$ over $M$,
$\Omega(P)$ has the structure of an $A$-DG algebra. In this case
$L_{\alpha}$ and $i_{\alpha}$ are the usual Lie derivatives and
interior products with respect to the vector fields on $P$ induced
by $\alpha$, while the $\Omega(M)$-module structure is
\[ \Phi \cdot \omega= \mu^*(\Phi)\wedge \omega ,\]
for $\Phi\in\Omega(M)$ and $\omega\in \Omega(P)$.
\end{example}

\begin{remark}\label{the-trick} Consider an action of $A$ on $P\stackrel{\mu}{\rmap} M$ and the induced $A$-DG algebra structure on $\Omega(P)$.
Then, the algebra $W(A; \Omega(P))$ is isomorphic (as a bigraded
differential algebra) to $W(A\ltimes P)$, where $A\ltimes P$ is the
corresponding action Lie algebroid.
\end{remark}

\section{The Van Est map}\label{section the Van Est map}

In this section we introduce the Van Est map which relates the
Bott-Shulman-Stasheff complex of a groupoid to the Weil algebra of its
algebroid.

Throughout this section, $G$ is a Lie groupoid over $M$ and $A$ is its Lie algebroid.
Any section $\alpha\in \Gamma(A)$ induces a vector field
$\alpha^p$ on each of the spaces $G_p$ of strings of $p$-composable
arrows.  Explicitly, for $g= (g_1, \ldots , g_p)\in G_p$ with $t(g)=
x$, $\alpha^{p}_{g}$ is the image of $\alpha_x\in A_x$ (i.e. in the
tangent space at $1_x$ of $s^{-1}(x)$) by the differential of the
map
\[ R_g: s^{-1}(x)\rmap G_p, \ \ a\mapsto ag:= (ag_1, g_2, \ldots , g_p).\]
The map $\alpha\rmap \alpha^p$ is nothing but the infinitesimal
action induced by the canonical right action of $G$ on $G_p$ (see
Subsection \ref{oids} and in particular Example \ref{action-nerve}).
When no confusion arises, we will denote the vector field $\alpha^p$
simply by $\alpha$.  The induced Lie derivative acting on
$\Omega(G_p)$, combined with the simplicial face map $s_0:
G_{p-1}\rmap G_p$ (which inserts a unit on the first place) induces
a map
\[ R_{\alpha}: \Omega^q(G_p)\rmap \Omega^q(G_{p-1}).\]
Intuitively, $R_{\alpha}(\omega)$ is the derivative on the first
argument along $\alpha$, at the units.



\begin{proposition}\label{proposition Van Est}
Let $G$ be a Lie groupoid over $M$ with Lie algebroid $A$. For any
normalized form in the Bott-Shulman-Stasheff complex of $G$, $\omega\in
\hat{\Omega}^q(G_p)$, the map
\[ \underbrace{\Gamma(A)\times \ldots \times \Gamma(A)}_{p\ \textrm{times}} \rmap \Omega^q(M),\]
\[ (\alpha_1, \ldots , \alpha_p)\mapsto  (-1)^{\frac{p(p+1)}{2}} \sum_{\sigma\in S_p} \textrm{sgn}(\sigma) R_{\alpha_{\sigma(1)}}\ldots R_{\alpha_{\sigma(p)}} (\omega)\]
is the leading term of a canonical element $V(\omega)\in W^{p,
q}(A)$ induced by $\omega$. Moreover, the resulting map
\begin{equation*}
V :\hat{\Omega}^q(G_p) \to W^{p,q}(A)
\end{equation*}
is compatible with the horizontal and the vertical differentials in
the sense that
\begin{eqnarray}
V\ud&=&(-1)^p d^{\textrm{v}} V,\\
V \delta&=& d^{\textrm{h}} V.
\end{eqnarray}
\end{proposition}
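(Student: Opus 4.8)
The plan is to realize the whole of $V(\omega)$ --- not merely its leading term --- as a cochain in the infinite-dimensional Kalkman algebra $W(\mathfrak{g}_A;\Omega(M))$ of Proposition~\ref{rel-Kal}, and then to exploit the fact that on the subalgebra $W(A)$ the two differentials $d^{\textrm{v}},d^{\textrm{h}}$ agree with the restrictions of the Kalkman differentials. Writing $R_\alpha=s_0^*\,L_{\alpha^p}$ and introducing the interior-product analogue $I_\alpha:=s_0^*\,i_{\alpha^p}:\Omega^q(G_p)\rmap\Omega^{q-1}(G_{p-1})$, I would fix the higher components of $V(\omega)$ through this inclusion; concretely, the $k$-th component $V(\omega)_k(\alpha_1,\ldots,\alpha_{p-k}\,|\,\alpha_{p-k+1},\ldots,\alpha_p)$ is obtained, up to signs and symmetrization, by the same symmetrized string of operators as the leading term but with the $k$ symmetric bar-variables fed in through $I$'s rather than $R$'s (this is $C^\infty(M)$-linear and symmetric in those slots, since $i_{f\alpha}=f\,i_\alpha$). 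The well-definedness claim then amounts to checking that the sequence $(V(\omega)_0,V(\omega)_1,\ldots)$ lies in $W^{p,q}(A)$, i.e. satisfies the recursion of Definition~\ref{def-W}.

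For that recursion the single input is the $A$-DG Cartan identity $L_{f\alpha}=f\,L_\alpha+(df)\,i_\alpha$, valid on $\Omega(G_p)$ with $f$ pulled back from $M$: pushing a function past one $R$ in the symmetrized string produces exactly an $f$-times term plus a $df\wedge(\,\cdot\,)$ term in which one $R$ has been converted into an $I$, which is precisely the $\partial_{\alpha_{p-i}}c_{i+1}$ correction demanded by Definition~\ref{def-W}. By Remark~\ref{rk-body} the element is moreover uniquely pinned down by its leading term in the range $q\le\dim M$, and this is what lets the differential compatibilities be verified on leading terms alone; the remaining degrees follow because all the operators involved are natural in $M$ (one may check the identity after a local embedding, or directly at the level of the full Kalkman cochain).

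The vertical compatibility $V\ud=(-1)^p d^{\textrm{v}}V$ is the easy half. Both $L_{\alpha^p}$ and the pullback $s_0^*$ commute with the De Rham differential, so $R_\alpha\,\ud=\ud\,R_\alpha$; hence the string of $R$'s commutes with $\ud$ and the leading term of $V(\ud\omega)$ is $d_{DR}$ applied to the leading term of $V(\omega)$. Matching this against $(d^{\textrm{v}}c)_0=(-1)^p d_{DR}(c_0)$ forces precisely the compensating factor $(-1)^p$, and the higher components then agree automatically by the uniqueness just noted.

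The horizontal compatibility $V\delta=d^{\textrm{h}}V$ is the substance of the proof and, I expect, the main obstacle. On leading terms it is exactly the statement that the Van Est map is a morphism from the groupoid (simplicial) complex to the Lie-algebroid Koszul complex: one must expand $\sum_\sigma\mathrm{sgn}(\sigma)\,R_{\alpha_{\sigma(1)}}\cdots R_{\alpha_{\sigma(p+1)}}(\delta\omega)$ with $\delta=\sum_i(-1)^i d_i^*$ and reorganize it into $\sum_{i<j}(-1)^{i+j}c_0([\alpha_i,\alpha_j],\ldots)+\sum_i(-1)^{i+1}L_{\rho(\alpha_i)}(c_0(\ldots))$. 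The three geometric inputs are: the simplicial identities relating the face maps $d_i$ to the unit-insertion $s_0$, which govern how each $d_i^*$ interacts with the string of $R$'s and collapses most terms; the Lie-morphism property $\alpha\mapsto\alpha^p$ with its compatibility under $s_0^*$, producing the commutator terms $R_{[\alpha,\beta]}$ that become the bracket contributions; and the identification, via $d_0$, of the outermost differentiation with the anchor, yielding the $L_{\rho(\alpha_i)}$ terms. The difficulty is essentially combinatorial: tracking the signs and recording which composable-string space $G_j$ each intermediate form lives on, and checking the telescoping cancellation among the $d_i^*$ with $0<i<p+1$ --- exactly as in the classical group case, but with forms (hence the extra $s_0^*$ bookkeeping) in place of functions.
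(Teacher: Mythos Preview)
Your approach is essentially the paper's: the operator you call $I_\alpha$ is exactly the $J_\alpha$ used there, the well-definedness argument via $R_{f\alpha}=fR_\alpha+df\wedge J_\alpha$ is the paper's, and the vertical compatibility is handled identically.

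Two points on the horizontal side need correcting. First, the reduction to leading terms: the paper does \emph{not} embed $M$ locally (the groupoid would have to extend as well, which is not available). Instead it uses functoriality under actions: for a fixed $\omega$, pull everything back to the action groupoid $G\ltimes P$ for an auxiliary $G$-space $P$ of dimension exceeding $q$; then Remark~\ref{rk-body} applies inside $W(A\ltimes P)$, and the inclusion $W(A)\hookrightarrow W(A\ltimes P)$ is compatible with both the Van Est maps and the differentials, so the identity descends.

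Second, your attribution of which face map produces which term is inverted. The key identities are
\[
R_\alpha d_0^*=0,\qquad R_\alpha d_1^*=L_\alpha,\qquad R_\alpha d_i^*=d_{i-1}^*R_\alpha\ \ (i>1).
\]
Thus $d_0^*$ contributes nothing; each $d_k^*$ with $k\ge1$ is shifted down by successive $R$'s until it becomes $d_1^*$ and is converted into a single $L$ sitting at position $p-k+1$ of the string. It is $d_p^*$ (not $d_0^*$) that places the $L$ at the far left and yields, after antisymmetrization, the $L_{\rho(\alpha_i)}$ terms of the Koszul differential. The remaining contributions $1\le k\le p-1$ do not telescope away: pairing $(\sigma,k)$ with $(\sigma\circ\tau_k,k)$, where $\tau_k=(p-k,\,p-k+1)$, and using $R_\alpha L_\beta-R_\beta L_\alpha=R_{[\alpha,\beta]}$ is precisely what manufactures the bracket terms.
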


\begin{remark}[More standard Van Est maps]\rm\
The standard Van Est map for a Lie groupoid $G$ relates the
differentiable cohomology $H^{\bullet}_{d}(G)$ with the cohomology
$H^{\bullet}(A)$ of the associated Lie algebroid. These cohomologies
can be identified in our framework as follows. $H^{\bullet}_{d}(G)$
is the cohomology of the first row $\Omega^{0}(G_{\bullet})$ of the
Bott-Shulman-Stasheff complex of $G$. On the other hand $H^{\bullet}(A)$ is
the cohomology of the first row $W^{\bullet, 0}(A)$ of the Weil
algebra. Our Van Est map extends the ordinary one to a map of double
complexes.
\end{remark}

As in the discussions in the previous section, one can heuristically
derive all the components of $V(\omega)$ out of the formula for the
leading term. However, strictly speaking we have to specify the
higher order terms for $V(\omega)$ to be well defined. To achieve
this, we need an operation similar to $R_{\alpha}$, but which uses
interior products instead of Lie derivatives:
\[ J_{\alpha}: \Omega^q(G_p)\rmap \Omega^{q-1}(G_{p-1}),\ \ J_{\alpha}(\omega):= s_{0}^{*}(i_{\alpha}(\omega)).\]

The component $\V(\omega)_i$ evaluated on sections of $\Gamma(A)$,
\[ V(\omega)_i(\alpha_1, \ldots , \alpha_{p-i}| \alpha)\in \Omega^{q-i}(M; S^{i}(A^*))\]
will be a sum in which each term arises by applying the operators
$R_{\alpha_k}$ $p-i$ times and $J_{\alpha}$ $i$ times in all
possible ways, with the appropriate sign. The summation is over all
permutations $\sigma\in S_p$ such that
\[ \sigma^{-1}(p-i+1) < \ldots < \sigma^{-1}(p-1) < \sigma^{-1}(p) .\]
We denote by $S_p(i)$ the set of all such permutations. For each
$\sigma\in S_p(i)$, we consider the expression
\[ V(\omega)_{i}^{\sigma}(\alpha_1, \ldots , \alpha_{p-i}| \alpha) := (-1)^iD_1\ldots D_p(\omega)\]
where the ordered sequence $D_1, \ldots , D_p$ is obtained as
follows. One starts with the sequence
\[ R_{\alpha_{\sigma(1)}}, \ldots , R_{\alpha_{\sigma(p)}} \]
and one replaces $R_{\alpha_{k}}$ by $J_{\alpha}$ whenever $k\in
\{p-i+1, \ldots , p\}$. With these conventions we define
\[ V(\omega)_i= (-1)^{\frac{p(p+1)}{2}}\sum_{\sigma\in S_p(i)} \textrm{sgn}(\sigma) V(\omega)_{i}^{\sigma}.\]

\begin{proof} (of Proposition \ref{proposition Van Est})
We first point out the following properties of the operators
$R_{\alpha}$ and $J_{\alpha}$, which follow immediately from similar
properties of the operators $L_{\alpha}$ and $i_{\alpha}$:
\begin{align}
&R_{\alpha}=J_{\alpha}\ud +\ud J_{\alpha}
\label{homotopy},\\
&R_{\alpha}(\eta \omega)=R_{\alpha}(\eta) s_0^*(\omega)+s_0^*(\eta)
R_{\alpha}(\omega) \label{r with product},\\
&J_{\alpha}(\eta \omega)=J_{\alpha}(\eta) s_0^*(\omega)+(-1)^q
s_0^*(\eta) J_{\alpha}(\omega) \label{jota with product},\\
&R_{f \alpha}(\eta)=d(f)J_{\alpha}(\eta)+f R_{\alpha}(\eta)\label{r takes f},\\
&J_{f \alpha}=f J_{\alpha} \label{jota takes f}.\\
\end{align}
Next, $R_{\alpha}$ and $J_{\alpha}$ interact with the degeneracy
maps $s_i$ as follows:
\begin{align}
&s_j^* J_{\alpha}=J_{\alpha}s_{j+1}^*\label{j with s},\\
&s_j^* R_{\alpha}=R_{\alpha}s_{j+1}^*\label{r with s}.
\end{align}
The second equation follows formally from the first one and formula
(\ref{homotopy}). The first one follows from the simplicial
relations and the equation
\begin{equation}
\label{help} s_{j+1}^*i_{\alpha^q}=i_{\alpha^{q-1}}s_{j+1}^*.
\end{equation}
In order to prove this last equation it is enough to evaluate it on
a one form $\omega \in \Omega^1(G_q)$. We will use the formula
\begin{equation*}
(ds_{j+1})_{g}(\alpha^{q-1})= \alpha_{s_{j+1}(g)}^q,
\end{equation*}
which follows from the definition of $\alpha^q$ and the fact that
$s_{j+1}R_g= R_{s_{j+1}(g)}$. Indeed, one simply computes:
\begin{eqnarray*}
i_{\alpha^{q-1}}s_{j+1}^*(\omega)_g&=&s_{j+1}^*(\omega)(\alpha^{q-1}_{g})\\
&=&\omega (ds_{j+1})_g(\alpha^{q-1}_{g}))\\
&=&\omega (\alpha^{q}_{s_{j+1}(g)})   \\
&=&s_{j+1}^*i_{\alpha^q}(\omega)_{g}.
\end{eqnarray*}
In particular, the equations above imply that $R_{\alpha}$ and
$J_{\alpha}$ preserve the normalized subcomplex. We will also use
the $\Omega(M)$-module structure on $\Omega(G_p)$ given by
\[ \Phi \omega= t^*(\Phi)\wedge \omega .\]
As a consequence of the previous formulas we have:
\[ R_{\alpha}(\Phi \omega)= \Phi R_{\alpha}(\omega), \ J_{\alpha}(\Phi \omega)=
(-1)^{\textrm{deg}(\Phi)} \Phi J_{\alpha}(\omega),\] for all
$\Phi\in \Omega(M), \omega\in \hat{\Omega}(G_{\bullet})$. From these
and (\ref{r takes f}) and (\ref{jota takes f}), it immediately
follows that the components $V(\omega)_i$ satisfy the desired
$C^{\infty}(M)$-linearity in the symmetric variables while on the
other variables we obtain the equation which expresses the relation
between $V(\omega)_i$ and $V(\omega)_{i+1}$. In other words,
$V(\omega)$ does belong to $W(A)$.

We will use the following remark on the functoriality of the Van Est
map. Given an action of $G$ on a space $P\stackrel{\mu}{\rmap} M$,
there is an action groupoid $G\ltimes P$ over $P$ with associated
Lie algebroid $A\ltimes P$. The pull-back from $M$ to $P$ induces
inclusions of the Weil algebra of $A$ and of the Bott-Shulman-Stasheff
complex of $G$, into the ones corresponding to $A\ltimes P$ and
$G\ltimes P$, respectively (see also Remark \ref{the-trick}), which
is compatible with the Van Est map:
\[ \xymatrix{
W(A) \ar[r]^-{V}\ar[d]^-{\textrm{incl}}  & \hat{\Omega}(G) \ar[d]^-{\textrm{incl}} \\
W(A\ltimes P) \ar[r]^-{V} & \hat{\Omega}(P\ltimes G) }\] Also, the
inclusion maps are clearly compatible with the vertical and the
horizontal differentials. We will use this diagram in order to
simplify the proof of the compatibility of $V$ with the
differentials. For instance, take $\omega\in \hat{\Omega}^q(G_p)$
and $q\leq \textrm{dim}(M)$. In order to prove that
\[ V(\ud(\omega))= (-1)^p d^{\textrm{v}}(V(\omega)),\]
it suffices to show that their leading terms coincide (see Remark
\ref{rk-body}). However, using a $G$-space $P$ with the dimension of
$P$ big enough (for a fixed $\omega$!), the previous diagram shows
that all we have to show is that
\[ V(\ud(\omega))_0= (-1)^p d^{\textrm{v}}(V(\omega))_0,\]
for all algebroids and all $\omega$'s. In turn, this formula follows
immediately from the definition of $d^{\textrm{v}}(\omega)_0$ and
the fact that the operations $R_{\alpha}$ commute with De Rham
differentials.

For the compatibility of $V$ with the horizontal differentials, we
will use the following formulas.
\begin{align}
R_{\alpha}d_i^*=\begin{cases}\label{r with d}
d_{i-1}^*R_{\alpha}&\text{ if } i>1,\\
L_{\alpha}& \text{ if } i=1,\\
0 & \text{ if } i=0.\\
\end{cases}\\
R_{\alpha}L_{\beta}- R_{\beta}L_{\alpha}= R_{[\alpha, \beta]}.
\label{commute}
\end{align}
Equations (\ref{r with d}) follow from the simplicial equations and
the following formula, which can be proven in the same way in which
(\ref{help}) was proved:
\begin{equation*}
i_{\alpha^{q+1}}d_{i}^*=\begin{cases}
{d_i}^*i_{\alpha^q}&\text{ if } i>0,\\
0 & \text{ if } i=0.\\
\end{cases}
\end{equation*}
Equation (\ref{commute}) follows immediately from $[L_{\alpha},
L_{\beta}]= L_{[\alpha, \beta]}$. We now prove the compatibility
with the horizontal differentials. As before, it suffices to show
that
\begin{equation}
\label{to-prove} V(\delta(\omega))_0= d^{\textrm{h}}(V(\omega))_0.
\end{equation}
Assume that  $\omega\in \Omega^q(G_{p-1})$ and we evaluate the
right hand side on $(\alpha_1, \ldots , \alpha_p)$. We have two
types of terms. The first type is
\[ \sum_{i= 1}^{p} (-1)^{i+1}L_{\alpha_i}(V(\omega)(\alpha_1, \ldots , \widehat{\alpha_i}, \ldots , \alpha_p)).\]
Writing out $V$ (for each $i$ fixed) we get a sum over permutations
$\sigma_0$ of $1, \ldots , \hat{i}, \ldots , p$. To the pair $(i,
\sigma_0)$ corresponds the permutation $\sigma= (i, \sigma_0(1),
\ldots )\in S_p$. Note that the number $\tau(\sigma)$ of
transpositions of $\sigma$ equals to $i-1+ \tau(\sigma_0)$, so the
sum above equals to
\begin{equation}
\label{inter-1} \sum_{i= 1}^{p} (-1)^{\frac{p(p-1)}{2}}
\textrm{sgn}(\sigma)L_{\alpha_{\sigma(1)}}(R_{\alpha_{\sigma(2)}}\ldots
R_{\alpha_{\sigma(p)}}\omega).
\end{equation}
The other term is
\[ \sum_{i< j} (-1)^{i+j} (V\omega)([\alpha_i, \alpha_j], \alpha_1, \ldots , \widehat{\alpha_i}, \ldots , \widehat{\alpha_j}, \ldots , \alpha_p).\]
Again, for each $i$ and $j$, writing out $V$ we get a sum over
permutations $\sigma_1$ of the list $$0, 1, \ldots , \hat{i}, \ldots
, \hat{j}, \ldots , p,$$ where $0$ is used to index the position of
$[\alpha_i, \alpha_j]$. To $(i, j, \sigma_1)$ we associate
\begin{itemize}
\item a number $k\in \{1, \ldots , p-1\}$ defined by the condition that $0$ is on the $(p-k)^{\textrm{th}}$-position of $\sigma_1$.
\item a permutation $\sigma \in S_p$ which is obtained from $\sigma_1$ by inserting $i$ on the $(p-k)^{\textrm{th}}$ place,
and $j$ on the $(p-k+1)^{\textrm{th}}$ (so that $\sigma(p-k)= i$,
$\sigma(p-k+1)= j$ and the ordered sequence $\sigma(1), \sigma(2),
\ldots$ from which $i$ and $j$ are deleted coincides with ordered
sequence $\sigma_1(0), \sigma_1(1), \ldots$ from which $0$ is
deleted.
\end{itemize}
Note that, modulo $2$, the number of transpositions of these
permutations satisfy:

\begin{eqnarray*}
\tau(\sigma_1) & = & \tau(\sigma(1), \ldots ,
\sigma(p-k-1), 0, \sigma(p-k+ 2), \ldots , \sigma(p)) \\
& = &  p-k +1+ \tau(\sigma(1), \ldots, \sigma(p-k-1), \sigma(p-k+2),
\ldots , \sigma(p)),
\end{eqnarray*}
\begin{eqnarray*}
\tau(\sigma) & = & \tau(\sigma(1), \ldots, \sigma(p-k-1), i, j, \sigma(p-k+2), \ldots , \sigma(p))
\equiv\\
& \equiv & \tau(i, j, \sigma(1), \ldots, \sigma(p-k-1), \sigma(p-k+2), \ldots , \sigma(p))
\\&&+ \tau (p-k, p-k+1, 1, 2, \ldots )\\
& = & \tau(i, j, \sigma(1), \ldots, \sigma(p-k-1), \sigma(p-k+2), \ldots , \sigma(p)) \\
& = & (i-1)+ (j-2)+ \tau(\sigma(1), \ldots, \sigma(p-k-1),
\sigma(p-k+2), \ldots , \sigma(p)).
\end{eqnarray*}
Thus
\[ (-1)^{i+j}\textrm{sgn}(\sigma_1)= (-1)^{p-k} \textrm{sgn}(\sigma) .\]
We conclude that the second term which arises from the right hand
side of (\ref{to-prove}) is:
\begin{equation}
\label{inter-2} (-1)^{\frac{p(p+1)}{2}}\sum_{k} \sum_{\sigma:
\sigma(p-k)< \sigma(p-k+1)} (-1)^{k}
\textrm{sgn}(\sigma)R_{\alpha_{\sigma(1)}} \ldots
R_{\alpha_{\sigma(p-k-1)}}R_{[\alpha_{\sigma(p-k)},
\alpha_{\sigma(p-k+1)}]} \ldots R_{\alpha_{\sigma(p)}}.
\end{equation}
The left hand side of (\ref{to-prove}) applied to $(\alpha_1, \ldots
, \alpha_p)$ is
\[ (-1)^{\frac{p(p+1)}{2}}\sum_{\sigma}\sum_{k= 0}^{p} \textrm{sgn}(\sigma) (-1)^k R_{\alpha_{\sigma(1)}}\ldots R_{\alpha_{\sigma(p)}} d_{k}^{*}\omega .\]
Using (\ref{r with d}), this is equal to
\[ (-1)^{\frac{p(p+1)}{2}}\sum_{k= 1}^{p} \textrm{sgn}(\sigma) (-1)^k R_{\alpha_{\sigma(1)}}\ldots R_{\alpha_{\sigma(p-k)}}L_{\alpha_{\sigma(p-k+1)}}\ldots R_{\alpha_{\sigma(p)}}.\]
When $k= p$ we obtain precisely (\ref{inter-1}). It remains to show
that the remaining terms give us (\ref{inter-2}). In that sum  (over
$\sigma$ and $k\leq p-1$) we distinguish two cases:
\begin{itemize}
\item $(k, \sigma)$ satisfies: $\sigma(p-k)< \sigma(p-k+1)$.
\item $(k, \sigma)$-satisfies: $\sigma(p-k)> \sigma(p-k+1)$.
\end{itemize}
Note that, using the transposition $\tau_k:= (p-k, p-k+1))$, we have
a bijection $(k, \sigma)\mapsto (k, \sigma \circ \tau_k)$ between
the first and second cases. Hence, both cases can be indexed by $(k,
\sigma)$ which satisfy $\sigma(p-k)< \sigma(p-k+1)$, but the second
case will produce terms of type:
\[ (-1)^{\frac{p(p+1)}{2}}(-\textrm{sgn}(\sigma)) (-1)^k R_{\alpha_{\sigma(1)}}\ldots R_{\alpha_{\sigma(p-k+1)}}L_{\alpha_{\sigma(p-k)}}\ldots R_{\alpha_{\sigma(p)}},\]
where we used that $\textrm{sgn}(\sigma\circ \tau_k)=
-\textrm{sgn}(\sigma)$. Putting together the two cases, we obtain:
\[ (-1)^{\frac{p(p+1)}{2}}(-1)^k\textrm{sgn}(\sigma)R_{\alpha_{\sigma(1)}}\ldots
(R_{\alpha_{\sigma(p-k)}}L_{\alpha_{\sigma(p-k+1)}}-
R_{\alpha_{\sigma(p-k+1)}}L_{\alpha_{\sigma(p-k)}})\ldots
R_{\alpha_{\sigma(p)}}, \] which combined with (\ref{commute}) gives
us precisely (\ref{inter-2}).
\end{proof}


\section{The Van Est isomorphism}\label{section the Van Est isomorphism}

In this section we will prove the following Van Est isomorphism
theorem.

\begin{theorem}\label{Van Est}
Let $G$ be a Lie groupoid with Lie algebroid $A$ and $k$-connected
source fibers. The homomorphism induced in cohomology by the Van Est
map:
\begin{equation*}
V:H^p(\hat{\Omega}^q(G_{\bullet}))\rightarrow H^p(W^{\bullet,q}(A)),
\end{equation*}
is an isomorphism for $p \leq k$ and is injective for $p=k+1$.
\end{theorem}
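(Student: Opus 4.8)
The plan is to fix the vertical degree $q$ and to compare the two complexes $(\hat{\Omega}^q(G_{\bullet}), \delta)$ and $(W^{\bullet,q}(A), d^{\textrm{h}})$ row by row, realising the Van Est map $V$ as an edge homomorphism of a single double complex that interpolates between them. The natural interpolating object is built from the source-fibre foliations of the nerve: by Example \ref{action-gr-inf}, each $G_p$ carries the action algebroid $A\ltimes G_p=\mathcal{F}_p$, whose leaves are diffeomorphic to the source fibres of $G$, and by Remark \ref{the-trick} the Weil algebra with coefficients $W(A;\Omega(G_p))$ is identified with $W(A\ltimes G_p)$. Assembling these over $p$, with the simplicial differential $\delta$ in one direction and the leafwise de Rham differential along $\mathcal{F}_p$ in the other, produces a double complex $C^{\bullet,\bullet}$ in which the operators $R_{\alpha}$ and $J_{\alpha}$ of Section \ref{section the Van Est map}, together with the homotopy identity \eqref{homotopy}, exhibit $V$ as the map onto the Weil edge.

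I would then compute the cohomology of $C^{\bullet,\bullet}$ in the two standard ways and compare the resulting spectral sequences. Collapsing first the leafwise direction, the crucial input is a fibrewise Poincar\'e lemma along the leaves of $\mathcal{F}_p$: since the source fibres are $k$-connected, their leafwise de Rham cohomology is $\mathbb{R}$ in degree $0$ and vanishes in degrees $1\leq j\leq k$. In that range the corresponding spectral sequence degenerates and its $E_1$-page reduces, row by row, to the normalised Bott--Shulman complex $\hat{\Omega}^q(G_{\bullet})$. Collapsing instead in the simplicial direction, evaluation at the units together with the jet/symmetric bookkeeping of Definition \ref{def-W} identifies the outcome with $(W^{\bullet,q}(A), d^{\textrm{h}})$, whose cohomology is $H^{\bullet}(A;S^q(\mathrm{Ad}^*))$ by \eqref{Sadjoint}. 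Because $V$ intertwines the two descriptions, the comparison of the two filtrations yields an isomorphism in cohomology precisely in those total degrees to which the positive-degree leafwise cohomology does not contribute.

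To pin down the range I would make the comparison of the two spectral sequences explicit. The discrepancy between the two $E_1$-pages is governed entirely by the leafwise cohomology $H^j(\text{source fibre})$ with $j\geq 1$; $k$-connectedness forces these to vanish for $1\leq j\leq k$, which gives that $V$ is an isomorphism for $p\leq k$, while the first possibly non-trivial contribution (at $j=k+1$) can obstruct only surjectivity and not injectivity, yielding the injectivity statement at $p=k+1$. Exactly as in the proof of Proposition \ref{proposition Van Est}, I would use the functoriality of $V$ under the passage from $G$ to an action groupoid $G\ltimes P$ for a high-dimensional $G$-space $P$, together with Remark \ref{rk-body}, to reduce each identity between spectral-sequence terms to a statement about leading terms, where it becomes a direct computation with $R_{\alpha}$ and $J_{\alpha}$.

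The main obstacle is the fibrewise Poincar\'e lemma in exactly the form required: one must construct a contracting homotopy along the leaves of $\mathcal{F}_p$ that is uniform in the simplicial degree $p$, commutes appropriately with the face and degeneracy maps (so that it descends to the normalised complex, compatibly with the relations $s_j^*J_{\alpha}=J_{\alpha}s_{j+1}^*$ and $s_j^*R_{\alpha}=R_{\alpha}s_{j+1}^*$), and respects the $S^q(A^*)$-valued symmetric structure underlying $W^{\bullet,q}(A)$. Equivalently, one must show that the infinitesimal homotopy \eqref{homotopy} integrates, under $k$-connectedness of the source fibres, to a genuine leafwise acyclicity in degrees $\leq k$. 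This is the technical heart of the argument and the one place where the connectivity hypothesis is used in an essential way.
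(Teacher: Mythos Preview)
Your architecture is the same as the paper's: a co-augmented double complex built from $W^{\bullet,q}(\mathcal{F}_j)\cong W(A;\Omega(G_{j+1}))$, with one augmentation by $\hat{\Omega}^q(G_{\bullet})$ and the other by $W^{\bullet,q}(A)$, and a comparison of the two edge maps. You also locate the difficulty correctly: the simplicial direction is the easy one (the paper contracts the columns using $s_0^*$, which is your ``evaluation at the units''), and the leafwise direction is where the $k$-connectedness is spent. Where your proposal diverges from the paper, and where there is a genuine gap, is in what actually controls the leafwise direction.

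You phrase the needed input as a ``fibrewise Poincar\'e lemma'' and speak of the leafwise de Rham cohomology of the source fibres. That is only the $q=0$ story. For general $q$ the rows are $(W^{\bullet,q}(\mathcal{F}_j), d^{\textrm{h}})$, which by \eqref{Sadjoint} compute $H^{\bullet}(\mathcal{F}_j; S^q(\mathrm{Ad}^*))$, not bare leafwise de Rham cohomology. The paper does \emph{not} build an explicit leafwise contracting homotopy; instead it proves Lemma~\ref{vanishing for foliations}: for a foliation by fibres of a submersion, the adjoint complex is quasi-isomorphic to $\nu[1]$, so $S^q(\mathrm{Ad}^*)$ is quasi-isomorphic to the ordinary representation $\Lambda^q(\nu^*)$, and then Theorem~2 of \cite{Cra1} gives $H^p(\mathcal{F}; \Lambda^q(\nu^*))=0$ for $0<p\leq k$ when the fibres are homologically $k$-connected. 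Your plan to integrate the infinitesimal homotopy \eqref{homotopy} into a leafwise null-homotopy compatible with all simplicial maps and with the $S^q$-structure is a much harder programme than what is actually required, and it is not clear it can be carried out.

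Two smaller points. First, you still owe the identification of $V$ with the zig-zag map $a^{-1}\circ b\circ e$; the paper does this by an explicit inductive computation showing $(s_0^* d^{\textrm{h}})^l(\eta)_0=\pm\sum_{\lambda}(-1)^{|\lambda|}R_{\alpha_{\lambda_1}}\cdots R_{\alpha_{\lambda_l}}(\eta)$, which is where $R_\alpha$ and $J_\alpha$ enter, not via a spectral-sequence identification. Second, the $G\ltimes P$ functoriality trick and Remark~\ref{rk-body} belong to the proof of Proposition~\ref{proposition Van Est} (compatibility of $V$ with the differentials), not to the isomorphism theorem; they play no role here.
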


\begin{remark}\rm \
When $q= 0$ one recovers the Van Est isomorphism of \cite{Cra1}. In
view of the isomorphism (\ref{Sadjoint}), the theorem gives an
isomorphism between $H^p(\Omega^q(G_{\bullet}))$ and $H^{p}(A;
S^{q}(\textrm{Ad}^*))$. When $G$ is a Lie group and $A=
\mathfrak{g}$ is a Lie algebra, this should be compared with the
result of Bott \cite{Bott} which gives an isomorphism between
$H^p(\Omega^q(G_{\bullet}))$ and the differentiable cohomology
$H^{p-q}_{d}(G; S^{q}\mathfrak{g}^*)$. In our coming paper
\cite{Ari-Cra2} we will show that the result of Bott holds for
arbitrary Lie groupoids.
\end{remark}

The proof of the theorem will be divided in two steps. First we will
prove that there is a homomorphism in cohomology which is an
isomorphism in the required degrees. Then we will prove that this
homomorphism coincides with the one induced by the Van Est map.

The first step is organized in the following co-augmented double
complex:
$$
\xymatrix{
\vdots &\vdots &\vdots &  \\
\Omega^q(G_2)\ar[r]^{d^*_0}\ar[u]^{\mathrm{d^{\textrm{h}}}}&
\W^{0,q}({\mathcal{F}}_2)\ar[u]^{\delta^{\textrm{v}}}
\ar[r]^{d^{\textrm{h}}}&\W^{1,q}({\mathcal{F}}_2)\ar[u]^{\delta^{\textrm{v}}}
\ar[r]&\dots\\
\Omega^q(G_1)\ar[r]^{d^*_0}\ar[u]^{\mathrm{d^{\textrm{h}}}}&
\W^{0,q}({\mathcal{F}}_1)\ar[u]^{\delta^{\textrm{v}}}
\ar[r]^{d^{\textrm{h}}}&W^{1,q}({\mathcal{F}}_1)\ar[u]^{\delta^{\textrm{v}}}
\ar[r]& \dots\\
\Omega^q(G_0)\ar[r]^{d^*_0}\ar[u]^{\mathrm{d^{\textrm{h}}}}&
\W^{0,q}({\mathcal{F}}_0)\ar[u]^{\delta^{\textrm{v}}}
\ar[r]^{d^{\textrm{h}}}&
W^{1,q}({\mathcal{F}}_0)\ar[u]^{\delta^{\textrm{v}}}
\ar[r]&\dots \\
 &  \W^{0,q}(A)\ar[u]^{\delta^{\textrm{v}}} \ar[r]^{d^{\textrm{h}}} &
\W^{1,q}(A)\ar[u]^{\delta^{\textrm{v}}}\ar[r] & \dots\\
}$$

Let us explain how this double complex is defined. As before, $G_k$
denotes the space of strings of $k$-composable arrows. Next,
$\mathcal{F}_k$ is the foliation on $G_{k+1}$ given by the fibers of
the map $d_0: G_{k+1}\rmap G_{k}$. We interpret $\mathcal{F}_k$ as
an integrable sub-bundle of $TG_{k+1}$ (namely the kernel of the
differential of $d_0$), hence also as a Lie algebroid over
$G_{k+1}$, with the inclusion as anchor. $W(\mathcal{F}_k)$ is the
Weil algebra of $\mathcal{F}_k$ and the $d^{\textrm{h}}$'s are the
corresponding horizontal differentials. We also define
$\mathcal{F}_{-1}:= A$.

The maps $d_{0}^{*}: \Omega^q(G_k)\rmap W^{0, q}(\mathcal{F}_{k})=
\Omega^q(G_{k+1})$ are given by the pull-back by $d_{0}$. To explain
$\delta^{\textrm{v}}$, we view $W(\mathcal{F}_k)$ as follows. First
of all, using the action of $G$ on $G_{k+1}$ (Example
\ref{action-nerve}) and the induced infinitesimal action of $A$ on
$G_{k+1}$, we have already mentioned that the associated Lie
algebroid $A\ltimes G_{k+1}$ can be identified with $\mathcal{F}_k$
(see Example \ref{action-gr-inf}). Hence, by Remark \ref{the-trick},
we have a canonical isomorphism
\[ W(\mathcal{F}_k) \cong W(A; \Omega(G_{k+1})),\]
where the left hand side is the Weil algebra with coefficients in
the $A$-DG algebra $\Omega(G_{k+1})$ associated to the action of $A$
on $G_{k+1}$ (see Example \ref{ex-ADG}). Since the simplicial maps
$d_i: G_{k+1}\rmap G_{k}$ are maps of $G$-spaces for $i\geq 1$, we
will have induced maps
\[ d_{i}^{*}: W(A; \Omega(G_{k}))\rmap W(A; \Omega(G_{k+1}))\]
which commute with $d^{\textrm{h}}$. We define
\[ \delta^{\textrm{v}}= \sum_{i\geq 1} (-1)^{i} d_{i}^{*}.\]
This completes the description of the double complex. Next, we claim
that the co-augmented columns of the double complex
$$
\xymatrix{0\ar[r]& W^{p,q}(A)\ar[r]^{\delta^{\textrm{v}}} &
W^{p,q}({\mathcal{F}}_0) \ar[r]^{\delta^{\textrm{v}}}&
W^{p,q}({\mathcal{F}}_1) \ar[r]^{\delta^{\textrm{v}}}&
 W^{p,q}({\mathcal{F}}_1)\ar[r]^{\delta^{\textrm{v}}}&\dots
}$$ are exact. This is a rather standard argument. Since
$\delta^{\textrm{v}}$ comes from a simplicial structure arising from
the nerve of $G$ by deleting the first face map ($d_0$ is not used
in the definition of $\delta^{\textrm{v}}$), the first degeneracy
map $s_0$ can be used to produce a contraction. More precisely,
since $s_{0}^{*}: \Omega(G_{k})\rmap \Omega(G_{k-1})$ respects the
$\Omega(M)$-module structure, it will induce a map:
\[ s_{0}^{*}: W(A; \Omega(G_{k}))\rmap W(A;\Omega(G_{k-1})).\]
The fact that $s_{0}^{*}$ is a contracting homotopy follows formally
from the simplicial identities:
\begin{eqnarray*}
\sigma^*_0\delta^{\textrm{v}}+\delta^{\textrm{v}}\sigma^*_0&=&\sum_{i=1}^{j+2}(-1)^{i+1}\sigma^*_0\delta^*_i+
\sum_{i=1}^{j+1}(-1)^{i+1}\delta^*_i\sigma^*_0\\
&=&Id+\sum_{i=2}^{j+2}(-1)^{i+1}\sigma^*_0\delta^*_i+
\sum_{i=1}^{j+1}(-1)^{i+1}\delta^*_i\sigma^*_0\\
&=&Id-\sum_{i=1}^{j+1}(-1)^{i+1}\delta^*_i\sigma^*_0+
\sum_{i=1}^{j+1}(-1)^{i+1}\delta^*_i\sigma^*_0=Id.
\end{eqnarray*}

This proves that the co-augmented columns of the double complex are
exact. The standard homological algebra of double complexes implies
that the map induced by the co-augmentation maps are isomorphisms:
\begin{equation*}
a:H(W^{\bullet,q}(A))\cong
H(Tot(W^{\bullet,q}({\mathcal{F}}_{\bullet}))).
\end{equation*}

Next, we look at the co-augmented rows
$$
\xymatrix{ 0\ar[r]&\Omega^q(G_{j})\ar[r]^{d^*_0}&
\W^{0,q}({\mathcal{F}}_j)\ar[r]& \W^{1,q}({\mathcal{F}}_j)\ar[r] & \ldots, \\
}$$ and we show that, under the assumption in the theorem, these are
exact up to degree $k$. Again, homological algebra implies that the
map induced by the co-augmentation of the columns
\begin{equation*}
b:H^p(\Omega^q(G_{\bullet}))\to
H^p(Tot(W^{\bullet,q}({\mathcal{F}}_{\bullet})),
\end{equation*}
is an isomorphism for $p<k+1$ and is injective for $p=k+1$. To prove
the (partial) acyclicity of the rows, we will use the following
lemma.

\begin{lemma}\label{vanishing for foliations}
Let $\mathcal{F}$ be a foliation given by the fibers of a submersion
with homologically $k$-connected fibers. Then, for $0<p \leq k$,
\begin{equation*}
H^p(W^{\bullet,q}({\mathcal{F}}))=0.
\end{equation*}
\end{lemma}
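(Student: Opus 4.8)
The plan is to identify the row cohomology $H^p(W^{\bullet,q}(\mathcal{F}))$ with a leafwise (foliated) cohomology group and then reduce it to the cohomology of the fibers of the defining submersion. Write $\pi:N\rightarrow B$ for the submersion whose fibers are the leaves of $\mathcal{F}$, so that $\mathcal{F}=\ker(d\pi)\subset TN$. By the identification (\ref{Sadjoint}) applied to the Lie algebroid $\mathcal{F}$, one has $H^p(W^{\bullet,q}(\mathcal{F}))\cong H^p(\mathcal{F};S^q(\textrm{Ad}^*))$, the Lie algebroid (i.e. leafwise) cohomology of $\mathcal{F}$ with coefficients in the $q$-th symmetric power of the dual adjoint representation. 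The first task is therefore to understand $S^q(\textrm{Ad}^*)$ for a foliation coming from a submersion.

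Here the adjoint representation is especially simple: its underlying complex is $\mathcal{F}\hookrightarrow TN$, the inclusion, which is injective with cokernel the normal bundle $\nu=TN/\mathcal{F}$. For a submersion $\nu\cong\pi^*TB$, and the Bott connection makes $\nu$ a genuine representation of $\mathcal{F}$ which is \emph{leafwise trivial}: sections pulled back from $B$ are leafwise flat. Dualizing and taking symmetric powers, the coefficient complex $(S^q(\textrm{Ad}^*),\partial)$ is, in each internal degree, the weight-$q$ piece of the Koszul complex of the surjection $\iota^*:T^*N\twoheadrightarrow \mathcal{F}^*$ with kernel $\nu^*$. A direct computation shows that this Koszul complex is acyclic except in internal degree $0$, where its cohomology is $\Lambda^q\nu^*$. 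Consequently the spectral sequence of the double complex (leafwise differential versus $\partial$) degenerates, no degree shift occurs, and one obtains $H^p(W^{\bullet,q}(\mathcal{F}))\cong H^p(\mathcal{F};\Lambda^q\nu^*)$, the leafwise cohomology with coefficients in the leafwise-trivial bundle $\Lambda^q\nu^*=\pi^*\Lambda^qT^*B$. The case $q=0$ recovers the plain leafwise cohomology $H^p(\mathcal{F})$, a useful sanity check.

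It then remains to prove that $H^p(\mathcal{F};\pi^*\Lambda^qT^*B)=0$ for $0<p\leq k$. Since the coefficient bundle is pulled back from $B$, the leafwise differential does not act on it (on a pulled-back frame it is the plain leafwise de Rham operator tensored with the identity), so the leafwise complex computes fiberwise de Rham cohomology: the cohomology sheaf $\mathcal{H}^p$ on $B$ has stalk $H^p(\pi^{-1}(b);\mathbb{R})\otimes\Lambda^qT_b^*B$. By the homological $k$-connectivity of the fibers, $H^p(\pi^{-1}(b);\mathbb{R})=0$ for $0<p\leq k$ and $H^0=\mathbb{R}$, so $\mathcal{H}^p=0$ in that range. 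To pass from this fiberwise vanishing to the vanishing of the \emph{global} cohomology, I would push the leafwise complex forward along $\pi$ and use that it is a complex of fine sheaves over $B$; the resulting foliated Leray spectral sequence $H^i(B;\mathcal{H}^j)\Rightarrow H^{i+j}(W^{\bullet,q}(\mathcal{F}))$ has, in total degree $p\leq k$, only the $j=0$ row surviving, and $H^i(B;\mathcal{H}^0)=0$ for $i>0$ because $\mathcal{H}^0$ is a fine $C^\infty_B$-module. This yields the desired vanishing for $0<p\leq k$.

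The main obstacle is the globalization step: one must justify carefully that fiberwise vanishing of cohomology forces global vanishing in the prescribed range. The clean route is the soft/fine-sheaf spectral sequence above; alternatively one can patch the fiberwise Poincar\'e homotopies with a partition of unity on $B$, but either way care is needed about convergence and about the fact that leafwise cohomology equals fiberwise cohomology only after this sheaf-theoretic reduction. A secondary point not to be skipped is the verification that the internal Koszul cohomology of $S^q(\textrm{Ad}^*)$ is concentrated in internal degree $0$: any nonzero shift there would push the vanishing outside the range $0<p\leq k$ and so would break the statement.
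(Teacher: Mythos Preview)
Your approach is essentially the same as the paper's. Both arguments invoke the identification (\ref{Sadjoint}) to rewrite $H^p(W^{\bullet,q}(\mathcal{F}))$ as Lie algebroid cohomology of $\mathcal{F}$ with coefficients in $S^q(\textrm{Ad}^*)$, then observe that for a foliation the adjoint complex $\mathcal{F}\hookrightarrow TN$ is quasi-isomorphic to the normal bundle $\nu$ in a single degree, so that the coefficient system reduces to the ordinary representation $\Lambda^q\nu^*$ with its Bott connection, and finally conclude by vanishing of foliated cohomology $H^p(\mathcal{F};\Lambda^q\nu^*)$ for $0<p\le k$.

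The only difference is in what is outsourced. The paper cites \cite{Ari-Cra1} for the quasi-isomorphism $S^q(\textrm{Ad}^*)\simeq \Lambda^q\nu^*$ and invokes Theorem~2 of \cite{Cra1} as a black box for the final vanishing step; you instead sketch the Koszul computation and the Leray-type spectral sequence argument explicitly. Your sketch of the latter is exactly the content of that theorem from \cite{Cra1}, and you correctly flag the globalization step (passing from fiberwise acyclicity to global vanishing via fineness of the pushed-forward sheaves) as the point needing care. One small wording issue: the cohomology of the Koszul complex for $S^q(\textrm{Ad}^*)$ is not literally concentrated in internal degree $0$ but rather in degree $-q$ (since $\nu^*$ sits in degree $-1$); however, this shift is precisely cancelled by the shift built into the identification (\ref{Sadjoint}), so your conclusion $H^p(W^{\bullet,q}(\mathcal{F}))\cong H^p(\mathcal{F};\Lambda^q\nu^*)$ is correct.
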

\begin{proof}
Here we will use the interpretation of the Weil algebra in terms of
representations up to homotopy given in Remark \ref{adjoint}. The
adjoint complex of a foliation is quasi-isomorphic to the complex
$\nu[1]$ which consists of the normal bundle $\nu= TM/\mathcal{F}$
concentrated in degree $1$. We now use the properties of
representations up to homotopy proved in \cite{Ari-Cra1}. We observe
that the representation up to homotopy $S^q(\textrm{Ad}^*)$ is
quasi-isomorphic to the ordinary representation $\Lambda^q(\nu^*)$.
Passing to cohomology and using the isomorphism (\ref{Sadjoint}), we
deduce that
\[ H^p(W^{\bullet,q}({\mathcal{F}}))\cong H^p({\mathcal{F}}, S^q(\nu ^*))=0\]
where the last equation is a direct application of Theorem $2$ from
\cite{Cra1}.
\end{proof}

We still have to show that the map
\[ a^{-1}\circ b: H^p(\Omega^q(G_{\bullet}))\rightarrow H^p(W^{\bullet,q}(A)),\]
which is an isomorphism in the desired degrees, is the same as the
Van Est map. More precisely we will show that, in cohomology,
\[ V= \pm a^{-1}\circ b \circ e ,\]
where $e$ is the isomorphism induced by the inclusion
$\hat{\Omega}(G_{\bullet})\hookrightarrow \Omega(G_{\bullet})$.
First we observe that $s^*_0$, the homotopy operator for the columns
of the double complex, gives a formula for the map $a^{-1}$.
Consider an element $c \in W^{j,q}(\mathcal{F}_p)$ such that
$\delta^{\textrm{v}}(c)=d^{\textrm{h}}(c)=0$. Then, chasing the
diagram we obtain that $a^{-1}(c)=(-1)^ps^*_0(d^{\textrm{h}}
s^*_0)^p$. We will use this formula to compute $a^{-1}\circ b \circ
e$. Take an element $\eta \in \hat{\Omega^q}(G_p)$ such that
$\delta(\eta)=0$ and compute:
\begin{eqnarray*}
a^{-1}\circ b \circ e(\eta)=(-1)^ps^*_0(d^{\textrm{h}}
s^*_0)^pd_0^*(\eta)=(-1)^p(s^*_0d^{\textrm{h}})^p(\eta).
\end{eqnarray*}
We claim that for each $0\leq l\leq p$:
\begin{equation*}
(s^*_0d^{\textrm{h}})^l(\eta)_0(\alpha_1,\dots,\alpha_l)=(-1)^{lq}\sum_{\lambda
\in S_l}(-1)^{|\lambda|}R_{\alpha_{\lambda_1}}\dots
R_{\alpha_{\lambda_l}}(\eta),
\end{equation*}
and also that $s^*_0((s^*_0d^{\textrm{h}})^l(\eta)_0(\alpha_1,\dots
,\alpha_l))=0$. We will prove our claim by induction on $l$. For
$l=0$ the claim is true because $\eta$ is normalized. We assume now
that the condition holds for $l-1$ and compute:
\begin{eqnarray*}
(s^*_0d^{\textrm{h}})^l(\eta)_0(\alpha_1,\dots,\alpha_l)
&=&s^*_0(d^{\textrm{h}}((s^*_0d^{\textrm{h}})^{l-1}(\eta)(\alpha_1,\dots,\alpha_l)\\
&=&s^*_0(\sum_{i=1}^{l}(-1)^{i+q+1}L_{\alpha_i}((s^*_0d^{\textrm{h}})^{l-1}(\eta))_0
(\alpha_1,\dots,\hat{\alpha_i},\dots,\alpha_l))\\
&=&(-1)^{q}(\sum_{i=1}^{l}(-1)^{i+1}R_{\alpha_i}((s^*_0d^{\textrm{h}})^{l-1}(\eta))_0
(\alpha_1,\dots,\hat{\alpha_i},\dots,\alpha_l))\\
&=&(-1)^{lq}\sum_{\lambda \in
S_l}(-1)^{|\lambda|}R_{\alpha_{\lambda_1}}\dots
R_{\alpha_{\lambda_l}}(\eta).
\end{eqnarray*}
In particular, for $l=p$ this means that $V=\pm a^{-1}\circ b \circ
e$. Since all $e,b, a^{-1}$ are isomorphisms in the required
degrees, this completes the proof.

\begin{corollary}
Let $G$ be a Lie groupoid with Lie algebroid $A$ and $k$-connected
source fibers. Then, the Van Est map $V:
H^p(\Omega(G_{\bullet}))\rightarrow H^p(\mathrm{Tot}(W(A)))$ is an
isomorphism for $p \leq k$ and is injective for $k=p+1$.
\end{corollary}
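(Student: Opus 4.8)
The plan is to obtain the corollary as a formal consequence of Theorem~\ref{Van Est} by comparing the two first-quadrant double complexes $\hat{\Omega}(G_{\bullet})$ and $W(A)$. First I would reduce to the normalized model: since the inclusion $\hat{\Omega}(G_{\bullet})\hookrightarrow \Omega(G_{\bullet})$ induces an isomorphism on total cohomology, and the Van~Est map is defined on $\hat{\Omega}(G_{\bullet})$, it suffices to prove the statement for $V\colon \mathrm{Tot}(\hat{\Omega}(G_{\bullet}))\rightarrow \mathrm{Tot}(W(A))$.

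Next I would verify that $V$ is an honest chain map between the two total complexes. Give $\mathrm{Tot}(\hat{\Omega}(G_{\bullet}))$ the total differential $D=\delta+(-1)^{p}\ud$ and $\mathrm{Tot}(W(A))$ the total differential $D_W=d^{\textrm{h}}+d^{\textrm{v}}$; these square to zero because $\delta$ and $\ud$ commute while $d^{\textrm{h}}$ and $d^{\textrm{v}}$ anticommute. A direct substitution of the two relations $V\ud=(-1)^{p}d^{\textrm{v}}V$ and $V\delta=d^{\textrm{h}}V$ from Proposition~\ref{proposition Van Est} then gives $VD=D_WV$, with no extra signs needed. In particular $V$ is a morphism of first-quadrant double complexes, and Theorem~\ref{Van Est} is exactly the statement that, on the $E_1$-pages of the spectral sequences obtained by first taking horizontal cohomology (with respect to $\delta$, resp.\ $d^{\textrm{h}}$), the map $V\colon H^{p}(\hat{\Omega}^{q}(G_{\bullet}))\rightarrow H^{p}(W^{\bullet,q}(A))$ is an isomorphism for $p\le k$ and is injective for $p=k+1$, uniformly in $q$.

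To finish I would pass through the mapping cone $C^{\bullet,\bullet}$ of $V$, formed in the category of double complexes. For each fixed $q$, the horizontal long exact sequence of the cone, together with the elementary criterion that $H^{p}$ of a cone vanishes precisely when the map is surjective on $H^{p}$ and injective on $H^{p+1}$, shows that the horizontal cohomology $H^{p}_{\delta}(C^{\bullet,q})$ vanishes for all $p\le k$: indeed $V$ is surjective on $H^{p}$ for $p\le k$ and injective on $H^{p+1}$ for $p+1\le k+1$. Hence, in the spectral sequence of $C^{\bullet,\bullet}$ whose $E_1$-term is horizontal cohomology, one has $E_1^{p,q}=0$ whenever $p\le k$. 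Since $C$ is first-quadrant, every bidegree $(p,q)$ contributing to total degree $n\le k$ has $p=n-q\le k$, so $E_\infty^{p,q}=0$ there and $H^{n}(\mathrm{Tot}(C))=0$ for all $n\le k$. Writing $n=p+q$ for the total degree and feeding this vanishing into the total long exact sequence of the cone, $H^{n}(\mathrm{Tot}(C))=0$ for $n\le k$ forces $V$ to be an isomorphism on $H^{n}$ for $n\le k$ and injective for $n=k+1$; this is the claim of the corollary. (Equivalently, one may simply invoke the comparison theorem for spectral sequences of first-quadrant double complexes.)

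I do not expect a substantive obstacle: all the analytic and geometric content is already contained in Theorem~\ref{Van Est}, and what remains is formal. The two points that require care are the sign bookkeeping that turns the relations of Proposition~\ref{proposition Van Est} into the single identity $VD=D_WV$ for the respective commuting and anticommuting conventions, and the treatment of the boundary degree $k+1$, where only injectivity is available; the mapping-cone long exact sequence is precisely the device that transports the ``isomorphism up to $k$, injective at $k+1$'' pattern from the $E_1$-page to total cohomology.
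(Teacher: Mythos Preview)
Your proposal is correct, and since the paper states this corollary without proof, your argument supplies exactly the standard homological step that the authors leave implicit: pass from the row-by-row statement of Theorem~\ref{Van Est} to the total complex via the mapping cone (equivalently, the comparison of the two column-filtration spectral sequences). One small quibble: the cone $C^{\bullet,\bullet}$ is not literally first-quadrant but lives in $p\ge -1$, $q\ge 0$; this does not affect your argument, since the long exact sequence of the row-wise cone together with the $p=0$ case of Theorem~\ref{Van Est} gives $H^{-1}_{h}(C^{\bullet,q})=0$ as well, and the filtration is still bounded so the spectral sequence converges and your vanishing of $E_1^{p,q}$ for $p\le k$ yields $H^n(\mathrm{Tot}(C))=0$ for $n\le k$ as claimed.
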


\begin{remark}
The isomorphism theorem \ref{Van Est} is the result one would expect
from a topological point of view. The map $V$ corresponds to the
projection $EG\rightarrow BG$, whose fibers are isomorphic to the
fibers of the source of $G$. In case this map is a fibration, the
Lerray-Serre spectral sequence gives isomorphisms in cohomology in
degrees less than the connectedness of the fiber.
\end{remark}


\section{Applications}\label{integrability}

In this section we discuss multiplicative forms from the point of
view of the Van Est isomorphism theorem. In particular, we will prove
Theorem \ref{xxx2} and Theorem \ref{xxx3} from the introduction.

We start with the following more precise version of Theorem \ref{xxx2},
which generalizes the main result of \cite{BUR} on integration of Dirac structures.

\begin{theorem}\label{cohomological integration1}
Let $G$ be a source simply connected Lie groupoid over $M$ with Lie
algebroid $A$ and let $\phi \in \Omega^{k+1}(M)$ be a closed form.
Then there is a one to one correspondence between:
\begin{itemize}
\item multiplicative forms $\omega \in \Omega^k(G)$ which are $\phi$-relatively closed.
\item $C^{\infty}(M)$-linear maps $\tau:\Gamma(A)\rightarrow \Omega^{k-1}(M)$
satisfying the equations:
\begin{eqnarray}
& i_{\rho(\beta)}(\tau(\alpha))  =  -i_{\rho(\alpha)}(\tau(\beta)),\label{mk-1}\\
& \tau([\alpha,\beta])  =  L_{\alpha}(\tau(\beta))-L_{\beta}(\tau(\alpha))
+d_{DR}(i_{\rho(\beta)}\tau(\alpha))+ i_{\rho(\alpha)\wedge
\rho(\beta)}(\phi).\label{mk-2}
\end{eqnarray}
for all $\alpha, \beta\in \Gamma(A)$.
\end{itemize}
The correspondence is given by:
\[ \tau(\alpha)= i_{\alpha}(\omega)|_{M},\]
where, on the right hand side, $\alpha\in \Gamma(A)$ is identified with the corresponding right invariant vector field on $G$ and
the restriction to $M$ makes use of the inclusion $M\hookrightarrow G$ as units.
\end{theorem}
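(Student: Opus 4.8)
The plan is to run the entire argument through the Van Est map of Proposition \ref{proposition Van Est}, reducing the statement to a purely infinitesimal computation in the Weil algebra together with one rigidity lemma that upgrades the cohomological Van Est isomorphism of Theorem \ref{Van Est} to a bijection of honest cochains. The role of the source simple connectedness is precisely to place the degree $p=1$ (where multiplicative forms live) inside the isomorphism range of Theorem \ref{Van Est}. First I would reformulate the groupoid side: $\omega$ is multiplicative and relatively $\phi$-closed exactly when $\omega+\phi$ is a cocycle in the total Bott-Shulman complex, with $\omega\in\Omega^k(G_1)$ and the fixed closed $\phi\in\Omega^{k+1}(M)=\Omega^{k+1}(G_0)$. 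I would also record that any multiplicative form is automatically normalized: pulling the multiplicativity relation back along $x\mapsto(1_x,1_x)$ gives $\epsilon^*\omega=2\epsilon^*\omega$, so $\epsilon^*\omega=0$ and $\omega\in\hat{\Omega}^k(G_1)$, which lets us feed it into $V$.

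On the infinitesimal side I would show, by exactly the computation carried out for IM forms in Example \ref{IM}, that a map $\tau$ satisfying (\ref{mk-1})--(\ref{mk-2}) is the same datum as an element $c\in W^{1,k}(A)$ with $c_1=-\tau$ (so that $c_0$ is forced, $c_0(\alpha)=d_{DR}\tau(\alpha)-i_{\rho(\alpha)}\phi$) for which $c+\phi$ is a cocycle for the total differential on $\mathrm{Tot}(W(A))$ making $V$ a morphism of complexes. Concretely, $(d^{\textrm{h}}c)_2=0$ is equation (\ref{mk-1}) after polarization, while $d^{\textrm{h}}c=0$ together with the vertical relation $d^{\textrm{v}}c=\pm d^{\textrm{h}}\phi$ unwind to (\ref{mk-2}), the remaining leading identity $(d^{\textrm{h}}c)_0=0$ being automatic just as in the IM case. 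Since $V$ is a morphism of bi-differential algebras and $V(\omega)_1(\,|\alpha)=\pm i_\alpha(\omega)|_M$, the prescription $\omega\mapsto\tau=i_\bullet(\omega)|_M$ factors as $\omega+\phi\mapsto V(\omega)+\phi=:c+\phi\mapsto\tau$, so proving the theorem amounts to showing that $\omega\mapsto V(\omega)$ is a bijection from multiplicative $\phi$-closed forms onto the cocycles $c$ above.

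The heart of the matter, and the step I expect to be the main obstacle, is that this last bijection is a statement about genuine cochains, whereas Van Est gives only a cohomological isomorphism; it is not formal, since already for $M=\mathrm{pt}$ one has nonzero classes in $H^1(W^{\bullet,k})$ while the theorem correctly predicts only $\omega=0$. The mechanism is the following rigidity lemma: \emph{if $\xi$ is multiplicative and closed with $V(\xi)=0$, then $\xi=0$}. To prove it I would use that $V$ is injective on $H^1$ (Theorem \ref{Van Est} with source simple connectedness) to write $\xi=\delta\beta$ with $\beta\in\Omega(M)$; then $V(\xi)=d^{\textrm{h}}\beta=0$ forces $i_{\rho(\alpha)}\beta=0$ and $L_{\rho(\alpha)}\beta=0$ for all $\alpha$, i.e.\ $\beta$ is basic. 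Writing $\xi=t^*\beta-s^*\beta$, one checks that $\xi$ vanishes along the units $M\hookrightarrow G$ (using $i_{\rho(\alpha)}\beta=0$) and is invariant under the flows of all right-invariant vector fields (using $L_{\rho(\alpha)}\beta=0$, that these fields are $s$-vertical and $t$-related to $\rho(\alpha)$); since the source fibers are connected, such flows reach every point of $G$ from a unit, whence $\xi\equiv0$.

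With the lemma in hand both halves follow. For injectivity, if $\omega,\omega'$ are multiplicative and $\phi$-closed with the same $\tau$, then $\eta=\omega-\omega'$ is multiplicative and closed with $V(\eta)=0$, so $\eta=0$. For surjectivity, given $\tau$ and its associated cocycle $c$, the surjectivity of $V$ on $H^1(W^{\bullet,k}(A))$ produces a multiplicative $\omega_0\in\hat{\Omega}^k(G_1)$ with $V(\omega_0)=c+d^{\textrm{h}}u$ for some $u\in\Omega^k(M)$; replacing $\omega_0$ by $\omega=\omega_0-\delta u$ keeps it multiplicative and achieves $V(\omega)=c$ exactly. Finally $\xi:=\ud\omega+\delta\phi$ is again multiplicative and closed, and $V(\xi)=0$ because $V(\omega)=c$ and $c+\phi$ is a total cocycle; the rigidity lemma then gives $\xi=0$, i.e.\ $\ud\omega=-\delta\phi$, which is exactly relative $\phi$-closedness, while $i_\bullet(\omega)|_M=\tau$ holds by construction. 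This establishes the one-to-one correspondence.
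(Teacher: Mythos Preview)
Your proof is correct and follows the same overall architecture as the paper: translate both sides into cocycle conditions (Bott--Shulman and Weil), use Proposition \ref{proposition Van Est} to map one to the other, invoke Theorem \ref{Van Est} for surjectivity after adjusting by a $\delta u$, and close with a rigidity lemma (``multiplicative, closed, $V(\cdot)=0$ $\Rightarrow$ $0$'') both to finish surjectivity and to get injectivity.

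The one genuine difference is how you prove the rigidity lemma. The paper (Lemma \ref{l-uniq}) argues directly: from $V(\omega)_1=0$ one sees $\omega_x=0$ at every unit $x$, then uses the multiplicativity identity $\alpha_g=(dm)(\alpha_y,0)$ to show $i_\alpha\omega=0$ globally, so $\omega$ is $s$-basic and hence zero. This uses only source-connectedness and no cohomology. You instead feed the Van Est isomorphism back into the lemma: from injectivity on $H^1$ you write $\xi=\delta\beta$, read off $i_{\rho(\alpha)}\beta=0$ and $L_{\rho(\alpha)}\beta=0$ from $d^{\textrm{h}}\beta=0$, and then argue $\xi=s^*\beta-t^*\beta$ vanishes at units and is right-invariant. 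Your route is a bit more indirect and formally requires source $1$-connectedness (harmless here since that is the standing hypothesis), whereas the paper's version is a standalone pointwise statement valid already for source-connected $G$; on the other hand your argument avoids the explicit multiplicativity computation at the level of tangent vectors. Both are valid and yield the theorem.
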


First we show that the correspondence $\omega\mapsto \tau$ is
well-defined, i.e. $\tau$ satisfies the equations (\ref{mk-1}) and
(\ref{mk-2}). For that, we first remark that $\tau$ is precisely
$V(\omega)_1\in \Omega^{k-1}(M; S^1(A^*))$, viewed as a map
$\Gamma(A)\rmap \Omega^{k-1}$. Since $\omega + \phi$ is a cocycle in
the Bott-Shulman-Stasheff complex (see subsection \ref{oids}), it follows
that $V(\omega)+ \phi$ is a cocycle in the Weil algebra. The desired
equations for $\tau$ will then be implied by the following:

\begin{proposition}\label{int-pr} Given $\phi\in \Omega^{k+1}(M)$, $\sigma= (\sigma_0, \sigma_1)\in W^{1, k}(A)$,
$\sigma+ \phi$ is a cocycle in the Weil algebra if and only if:
\begin{enumerate}
\item $\phi$ is a closed form and $\sigma_1$ satisfies equations (\ref{mk-1}), (\ref{mk-2}).
\item $\sigma_0(\alpha)= i_{\rho(\alpha)}(\phi)- d( \sigma_1(\alpha))$.
\end{enumerate}
\end{proposition}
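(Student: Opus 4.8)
The plan is to expand the cocycle condition $d(\sigma+\phi)=0$ according to bidegree. Since $\sigma\in W^{1,k}(A)$ and $\phi\in\Omega^{k+1}(M)=W^{0,k+1}(A)$, the element $\sigma+\phi$ is homogeneous of total degree $k+1$, and $d=d^{\textrm{v}}+d^{\textrm{h}}$ sends it to a total degree $k+2$ element whose only possibly nonzero bihomogeneous parts live in $W^{0,k+2}(A)$, $W^{1,k+1}(A)$ and $W^{2,k}(A)$. Tracking where each of $d^{\textrm{v}}\sigma, d^{\textrm{h}}\sigma, d^{\textrm{v}}\phi, d^{\textrm{h}}\phi$ lands, the cocycle condition is equivalent to the three independent equations
\[
d^{\textrm{v}}\phi=0,\qquad d^{\textrm{v}}\sigma+d^{\textrm{h}}\phi=0,\qquad d^{\textrm{h}}\sigma=0,
\]
living in these three bidegrees respectively. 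I would treat them one at a time using the explicit component formulas for $d^{\textrm{v}}$ and $d^{\textrm{h}}$ from Section \ref{the Weil algebra}, writing $\tau(\alpha):=\sigma_1(|\alpha)$ throughout.

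The first equation is immediate: since $\phi$ has $p=0$ one has $(d^{\textrm{v}}\phi)_0=d_{DR}(\phi)$, so $d^{\textrm{v}}\phi=0$ is exactly the closedness of $\phi$. For the middle equation I would compute its two components separately. From $(d^{\textrm{v}}\sigma)_1(|\alpha)=d_{DR}(\tau(\alpha))+\sigma_0(\alpha)$ and $(d^{\textrm{h}}\phi)_1(|\alpha)=-i_{\rho(\alpha)}(\phi)$, the degree-one component yields precisely condition (2), namely $\sigma_0(\alpha)=i_{\rho(\alpha)}(\phi)-d_{DR}(\tau(\alpha))$. The degree-zero component reads $d_{DR}(\sigma_0(\alpha))=L_{\rho(\alpha)}(\phi)$; I would then note that this is automatic once (2) holds and $\phi$ is closed, since applying $d_{DR}$ to (2) and using Cartan's formula $d_{DR}i_{\rho(\alpha)}\phi=L_{\rho(\alpha)}\phi-i_{\rho(\alpha)}d_{DR}\phi$ gives exactly this identity. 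Thus, given that $\phi$ is closed, the middle equation is equivalent to (2).

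It remains to analyze $d^{\textrm{h}}\sigma=0$, which splits into its components of symmetric degree $0,1,2$. The degree-two component is $(d^{\textrm{h}}\sigma)_2(|\alpha)=-i_{\rho(\alpha)}(\tau(\alpha))$; requiring this to vanish for all $\alpha$ and polarizing gives $i_{\rho(\beta)}\tau(\alpha)+i_{\rho(\alpha)}\tau(\beta)=0$, which is equation (\ref{mk-1}). The degree-one component is $(d^{\textrm{h}}\sigma)_1(\beta|\alpha)=\delta(\sigma_1)(\beta|\alpha)+i_{\rho(\alpha)}\sigma_0(\beta)$, where $\delta(\sigma_1)(\beta|\alpha)=L_{\rho(\beta)}\tau(\alpha)-\tau([\beta,\alpha])$ is the Koszul differential in the coadjoint-type representation on $\Omega(M;S^1A^*)$; here I would substitute (2) for $\sigma_0(\beta)$, rewrite $i_{\rho(\alpha)}d_{DR}\tau(\beta)=L_{\rho(\alpha)}\tau(\beta)-d_{DR}i_{\rho(\alpha)}\tau(\beta)$ via Cartan, use (\ref{mk-1}) to replace $i_{\rho(\alpha)}\tau(\beta)$ by $-i_{\rho(\beta)}\tau(\alpha)$, and identify $-i_{\rho(\alpha)}i_{\rho(\beta)}\phi$ with $i_{\rho(\alpha)\wedge\rho(\beta)}\phi$, after which the equation collapses to exactly (\ref{mk-2}). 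Finally the degree-zero component $(d^{\textrm{h}}\sigma)_0(\alpha_1,\alpha_2)=-\sigma_0([\alpha_1,\alpha_2])+L_{\rho(\alpha_1)}\sigma_0(\alpha_2)-L_{\rho(\alpha_2)}\sigma_0(\alpha_1)$ must be shown to be redundant: after substituting (2) it reduces, by the same Cartan identities together with $\rho([\alpha_1,\alpha_2])=[\rho(\alpha_1),\rho(\alpha_2)]$ and $d_{DR}\phi=0$, to a consequence of (\ref{mk-1}) and (\ref{mk-2}), exactly as in the IM-form computation following Example \ref{IM}. The main obstacle is precisely this sign- and Cartan-bookkeeping in the degree-one and degree-zero parts of $d^{\textrm{h}}\sigma$: the decomposition by bidegree is routine, but fixing the convention for $\delta(\sigma_1)$ and verifying the redundancy of the leading term require care.
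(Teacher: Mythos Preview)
Your proposal is correct and follows essentially the same route as the paper's own proof: decompose the cocycle condition by bidegree into $d^{\textrm{v}}\phi=0$, $d^{\textrm{v}}\sigma+d^{\textrm{h}}\phi=0$, and $d^{\textrm{h}}\sigma=0$, then unpack each into its components using the explicit formulas from Section~\ref{the Weil algebra}. The only differences are cosmetic: you make the equation $d^{\textrm{v}}\phi=0$ (hence closedness of $\phi$) explicit where the paper leaves it implicit, and you write $(d^{\textrm{h}}\sigma)_1(\beta\,|\,\alpha)$ with the variables swapped relative to the paper's $(d^{\textrm{h}}\sigma)_1(\alpha\,|\,\beta)$; both orderings of course lead to the same identity (\ref{mk-2}) after using (\ref{mk-1}) and Cartan's formula, and the paper likewise dismisses the redundancy of $(d^{\textrm{h}}\sigma)_0$ as ``a simple computation''.
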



\begin{proof}
The condition that $\sigma+ \phi$ is a cocycle means that
$d^{\textrm{v}}(\sigma)+ d^{\textrm{h}}(\phi)= 0$. But
\[ d^{\textrm{v}}(\sigma)_0(\alpha)= - d_{DR}(\sigma_0(\alpha)), (d^{\textrm{v}}\sigma)_1( | \alpha)= d_{DR}(\sigma_1(|\alpha))+ \sigma_0(\alpha) ,\]
\[ (d^{\textrm{h}}\phi)_0(\alpha)= L_{\rho(\alpha)}(\phi), (d^{\textrm{h}}\phi)_1( | \alpha)= -i_{\rho(\alpha)}(\phi), \]
hence we obtain the equations:
\[ \sigma_0(\alpha)= i_{\rho(\alpha)}(\phi)- d(\sigma_1(\alpha)), d(\sigma_0(\alpha))= L_{\rho(\alpha)}(\phi).\]
Since the second equation is obtained by applying $d_{DR}$ to the
first one and using that $\phi$ is closed, we only have to keep in
mind the first equation.

The other condition for $\sigma+ \phi$ to be a cocycle is
$d^{\textrm{h}}(\sigma)= 0$. We write the components:
\[ (d^{\textrm{h}}\sigma)_0(\alpha, \beta)= - \sigma_0([\alpha, \beta])+ L_{\rho(\alpha)}(\sigma_0(\beta))- L_{\rho(\beta)}(\sigma_0(\alpha)),\]
\[ (d^{\textrm{h}}\sigma)_1(\alpha| \beta)= L_{\rho(\alpha)}(\sigma_1(\beta))- \sigma_1([\alpha, \beta])+ i_{\rho(\beta)}(\sigma_0(\alpha)),\]
\[ (d^{\textrm{h}}\sigma)_2(|\alpha)= - i_{\rho(\alpha)}(\sigma_1(\alpha)).\]
 Clearly, $(d^{\textrm{h}}\sigma)_2= 0$ is equivalent to (\ref{mk-1}). Also, $(d^{\textrm{h}}\sigma)_1= 0$ is equivalent to
\[ \sigma_1([\alpha, \beta])= L_{\rho(\alpha)}(\sigma_1(\beta))- L_{\rho(\beta)}(\sigma_1(\alpha))+ di_{\rho(\beta)}(\sigma_1(\alpha))+ i_{\rho(\beta)}i_{\rho(\alpha)}(\phi),\]
which is equivalent to (\ref{mk-2}). Finally, a simple computation
shows that if $(d^{\textrm{h}}\sigma)_2= 0$ and
$(d^{\textrm{h}}\sigma)_1= 0$ then $(d^{\textrm{h}}\sigma)_0= 0$.
\end{proof}

Next, we prove that the correspondence $\omega\mapsto \tau$ in the
theorem is injective. Since $\tau= V(\omega)_1$, this part follows
from the following:

\begin{lemma} \label{l-uniq}
If $G$ is a Lie groupoid with connected source-fibers and $\omega\in
\Omega^k(G)$ is closed and multiplicative, then the following are
equivalent:
\begin{enumerate}
\item[(i)] $V(\omega)= 0$.
\item[(ii)] $V(\omega)_1= 0$.
\item[(iii)] $\omega_x= 0$ for all $x\in M$.
\item[(iv)] $\omega= 0$.
\end{enumerate}
\end{lemma}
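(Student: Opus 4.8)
The plan is to establish the cycle of implications $(iv) \Rightarrow (i) \Rightarrow (ii) \Rightarrow (iii) \Rightarrow (iv)$, so that all four conditions are equivalent. The implications $(iv)\Rightarrow (i)\Rightarrow (ii)$ are immediate: if $\omega=0$ then certainly $V(\omega)=0$ by linearity of the Van Est map, and $V(\omega)=0$ forces its component $V(\omega)_1=0$. The geometrically transparent implication $(iii)\Rightarrow (iv)$, that vanishing of $\omega$ along the unit manifold $M$ forces $\omega$ to vanish everywhere, is where the source-connectedness hypothesis enters, and I expect this to be the main obstacle.

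First I would dispose of $(ii)\Rightarrow(iii)$. Recall from the preceding discussion that $V(\omega)_1(\,|\,\alpha)=-i_\alpha(\omega)|_M$, where $\alpha\in\Gamma(A)$ is viewed as a right-invariant vector field and the restriction uses $M\hookrightarrow G$. Since $\omega$ is closed, the component $V(\omega)_0$ is determined by $V(\omega)_1$ through the relation $V(\omega)_0(\alpha)=-d_{DR}(V(\omega)_1(\alpha))$ (this is exactly the $d^{\textrm{v}}$-closedness computation used in the IM-forms example). Thus $V(\omega)_1=0$ already gives $V(\omega)=0$, and in particular $i_\alpha(\omega)|_M=0$ for every section $\alpha$. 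At a unit $x\in M$, the vectors $\alpha_x$ together with $T_xM$ span the full tangent space $T_xG$ (because $A_x\oplus T_xM = T_{1_x}G$, reflecting the splitting of $T_xG$ into the source-fiber direction and the unit direction). Since $\omega$ is normalized in the Bott-Shulman sense, its pullback under the degeneracy $s_0$ vanishes, which kills the purely ``horizontal'' $T_xM$-contributions; combining this with $i_{\alpha}\omega|_M=0$ for all $\alpha$ forces $\omega_x=0$ for all $x\in M$.

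The heart of the argument is $(iii)\Rightarrow(iv)$, and I would run it by a right-invariance plus connectedness argument. Because $\omega$ is both closed and multiplicative, the Lie derivatives $L_\alpha\omega$ along right-invariant vector fields can be controlled by Cartan's formula $L_\alpha\omega = d\,i_\alpha\omega + i_\alpha\,d\omega = d\,i_\alpha\omega$. The plan is to show that the set $Z=\{g\in G : \omega_g=0\}$ is both open and closed in each source fiber $s^{-1}(x)$, whence by $k$-connectedness (in particular connectedness) of the fibers and the fact that $Z$ contains the unit $1_x$, one gets $Z=s^{-1}(x)$ for every $x$, i.e. $\omega=0$. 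Closedness of $Z$ in each fiber is clear by continuity. For the flow-invariance that yields the spreading, I would use that $\omega$ is determined along a source fiber by its value at the unit together with the first-order data encoded in $V(\omega)$, exploiting that the multiplicativity equation $d_1^*\omega = d_0^*\omega + d_2^*\omega$ lets one propagate the vanishing from units to arbitrary arrows via the groupoid multiplication. The subtle point, and the one I would be most careful about, is making the ``propagation'' precise: one must show that the flows of the right-invariant vector fields $\alpha$ act transitively enough on each source fiber to reach every arrow from the unit, which is exactly where connectedness of $s^{-1}(x)$ is indispensable. Once the vanishing is transported along these flows using $L_\alpha\omega=d\,i_\alpha\omega$ and the already-established $i_\alpha\omega|_M=0$, one concludes $\omega_g=0$ on the connected component of $1_x$, which is all of $s^{-1}(x)$.
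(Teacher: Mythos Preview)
Your argument for $(ii)\Rightarrow(iii)$ is essentially the paper's, and the trivial implications are fine. The gap is in $(iii)\Rightarrow(iv)$.

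The flow argument you sketch does not go through. From $L_\alpha\omega = d\,i_\alpha\omega$ and $i_\alpha\omega|_M=0$ you cannot conclude that the vanishing set $Z=\{g:\omega_g=0\}$ is flow-invariant: even if $\omega_g=0$, the value $(d\,i_\alpha\omega)_g$ depends on first derivatives of $i_\alpha\omega$ near $g$, which are not controlled by $\omega_g$ alone. So $L_\alpha\omega$ need not vanish on $Z$, and nothing forces $Z$ to be open in the source fiber. The phrase ``the multiplicativity equation lets one propagate the vanishing'' is the right instinct, but you never cash it in; you immediately revert to the Cartan-formula argument, which by itself is insufficient.

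What the paper does instead is use multiplicativity \emph{pointwise} at $(1_y,g)\in G_2$ (with $y=t(g)$) to show directly that $i_\alpha\omega_g=0$ for every $g\in G$, not just for $g\in M$. Concretely, one writes $\alpha_g=(dm)_{(1_y,g)}(\alpha_y,0)$ and $V_g=(dm)_{(1_y,g)}\bigl((dt)_g V_g,\,V_g\bigr)$ and applies $m^*\omega=\mathrm{pr}_1^*\omega+\mathrm{pr}_2^*\omega$; the resulting expression lives at the unit $1_y$ and vanishes by $(iii)$. Once $i_\alpha\omega=0$ holds globally, closedness gives $L_\alpha\omega=0$ as well, so $\omega$ is basic for the submersion $s:G\to M$. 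Source-connectedness then yields $\omega=s^*\theta$ for some $\theta\in\Omega^k(M)$, and $\omega|_M=0$ forces $\theta=0$. Your proposal would be repaired by inserting exactly this multiplicativity computation in place of the flow argument; the connectedness hypothesis is then used only in the last ``basic $\Rightarrow$ pulled back'' step, not to propagate along flows.
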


\begin{proof}
Since $V(\omega)$ is a cocycle in the Weil algebra, Proposition
\ref{int-pr} tells us that $V(\omega)$ is determined by
$V(\omega)_1$, hence (i) and (ii) are equivalent. In turn, from the
definition of $V(\omega)_1$, (ii) means that
\[ \omega(\alpha_x, V^{2}_{x}, \ldots , V^{k}_{x})= 0\]
for all $x\in M$, $V^{k}_{x}\in T_xM$, $\alpha\in \Gamma(A)$, where
we identify $\alpha$ with the induced right invariant vector field
on $G$. In other words, $\omega_x$ is zero when applied to one
vector tangent to the $s$-fiber and $(k-1)$ vectors tangent to the
base. We have to show that this implies $\omega_x$ is zero when
applied to all vectors. But $T_xG$ splits as the sum of the tangent
space to the $s$-fiber and the tangent space of $M$ (both at $x$),
hence it remains to show that $\omega|M= 0$. But this follows
immediately from $\omega|_{M}= s_{0}^{*}d^{\textrm{h}}\omega$.
Finally, we have to show that (iii) implies $\omega= 0$. For this we
evaluate expressions of type
\begin{equation}
\label{omega-part} \omega_g(\alpha_g, V^{2}_{g}, \ldots , V^{k}_{g})
\end{equation}
for $g\in G$, $\alpha\in \Gamma(A)$, $V^{i}_{g}\in T_gG$ arbitrary.
To make use of the multiplicativity of $\omega$, we write
\[ \alpha_g= (dm)_{y, g}(\alpha_y, 0), v^{i}_{g}= (dm)_{y, g}((dt)_g(V^{i}_{g}), V^{i}_{g})\]
and we find that (\ref{omega-part}) is equal to
\[ \omega(\alpha_y, (dt)_g(V^{2}_{g}), \ldots , (dt)_g(V^{k}_{g})),\]
which, by assumption, is zero. We conclude that $i_{\alpha}(\omega)=
0$ for all $\alpha$ hence, since $\omega$ is also closed, it is
basic  with respect to the submersion $s: G\rmap M$. Since the
$s$-fibers are connected, we find $\theta$ on $M$ such that $\omega=
s^{*}\theta$. But $\omega|_{M}= 0$ implies $\theta= 0$ and then
$\omega= 0$.
\end{proof}

\begin{proof} (end of proof of theorem \ref{cohomological integration1}) Finally, we prove that the correspondence $\omega\mapsto \tau$ in
the theorem is surjective. Note that the case $k=2$ was proved in
\cite{BUR} and surjectivity was the most difficult part of the
proof. Given $\tau$, take $\sigma\in W^{1, k}(A)$ as in Proposition
\ref{int-pr} with $\sigma_1= \tau$. Since $\sigma$ is
$d^{\textrm{h}}$-closed, Theorem \ref{Van Est} implies that there
exist some multiplicative form $\omega' \in \Omega^p(G)$ such that
$V(\omega')=d^{\textrm{h}}(\theta)+\sigma$, for some $\theta \in
\Omega^k(M)$. It is then clear that $\omega= \omega'-\delta(\theta)$
is a multiplicative form satisfying $V(\omega)=\sigma$. In
particular, $V(\ud\omega+ \delta\phi)= d^{\textrm{v}}\sigma+
d^{\textrm{h}}\phi= 0$ and $d^{\textrm{v}}\omega+
d^{\textrm{h}}\phi$ is both multiplicative and closed. Using the
previous lemma, we find that $\omega$ is $\phi$-relatively closed.
This concludes the proof of Theorem \ref{cohomological
integration1}.
\end{proof}

Next, we discuss Theorem \ref{xxx3} from the introduction. We will
prove the following more precise statement.

\begin{theorem}\label{cohomological integration2} Let $G$ be a source simply connected Lie groupoid over $M$ with Lie
algebroid $A$ and let $\omega\in \Omega^k(G)$ be a closed
multiplicative $k$-form. Then there is a 1-1 correspondence between:
\begin{itemize}
\item $\theta\in \Omega^{k-1}(G)$ multiplicative satisfying $\ud(\theta)= \omega$.
\item $C^{\infty}(M)$-linear maps $l: \Gamma(A)\rmap \Omega^{k-2}(M)$ satisfying
\begin{eqnarray}
&i_{\rho(\beta)}(l(\alpha))=-i_{\rho(\alpha)}(l(\beta)),\label{mmk-1}\\
& c_{\omega}(\alpha, \beta)= - l([\alpha, \beta])+
L_{\rho(\alpha)}(l(\beta))- L_{\rho(\beta)}(l(\alpha))+
d_{DR}(i_{\rho(\beta)}l(\alpha)). \label{mmk-2}
\end{eqnarray}
\end{itemize}
where $c_{\omega}(\alpha, \beta)= i_{\rho(\alpha)\wedge \rho(\beta)}(\omega)|_{M}$. The correspondence is given by
\begin{equation*}
l(\alpha)= - i_{\alpha}(\theta)|_{M}.
\end{equation*}
\end{theorem}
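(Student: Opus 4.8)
The plan is to transport the whole question to the Weil algebra via the Van Est map of Proposition~\ref{proposition Van Est}, following the template of the proof of Theorem~\ref{cohomological integration1}. Given $\theta\in\hat{\Omega}^{k-1}(G_1)$, set $\sigma:=V(\theta)\in W^{1,k-1}(A)$. From the definition of the Van Est map one reads off $\sigma_1(\,|\alpha)=J_{\alpha}(\theta)=s_0^*(i_{\alpha}\theta)=i_{\alpha}(\theta)|_M$, so that $l=-\sigma_1$ is exactly the map $l(\alpha)=-i_{\alpha}(\theta)|_M$ of the statement, and (as a consistency check) $\sigma_0(\alpha)=-R_{\alpha}(\theta)=-(L_{\alpha}\theta)|_M$. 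The two defining properties of $\theta$ translate transparently: multiplicativity is the equation $\delta(\theta)=0$, which by $V\delta=d^{\textrm{h}}V$ gives $d^{\textrm{h}}\sigma=0$; and $\ud(\theta)=\omega$ gives, via the compatibility $V\ud=(-1)^{p}d^{\textrm{v}}V$ with $p=1$, the relation $-d^{\textrm{v}}\sigma=V(\omega)$. Since $\omega$ is closed and multiplicative, $V(\omega)\in W^{1,k}(A)$ is a $(d^{\textrm{v}},d^{\textrm{h}})$-cocycle.

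The core of the argument is a Weil-algebra lemma in the exact spirit of Proposition~\ref{int-pr}: for $\sigma=(\sigma_0,\sigma_1)\in W^{1,k-1}(A)$, the pair of conditions $-d^{\textrm{v}}\sigma=V(\omega)$ and $d^{\textrm{h}}\sigma=0$ is equivalent to requiring that $\sigma_0(\alpha)=-d_{DR}(\sigma_1(\alpha))-V(\omega)_1(\alpha)$ and that $l=-\sigma_1$ satisfy (\ref{mmk-1}) and (\ref{mmk-2}). To see this I would write out the components as in Proposition~\ref{int-pr}. The vertical equation $-(d^{\textrm{v}}\sigma)_1=V(\omega)_1$, together with $(d^{\textrm{v}}\sigma)_1(\,|\alpha)=d_{DR}(\sigma_1(\alpha))+\sigma_0(\alpha)$, forces the displayed formula for $\sigma_0$, while $-(d^{\textrm{v}}\sigma)_0=V(\omega)_0$ is then automatic because $V(\omega)$ is $d^{\textrm{v}}$-closed. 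For the horizontal differential, $(d^{\textrm{h}}\sigma)_2(\,|\alpha)=-i_{\rho(\alpha)}(\sigma_1(\alpha))$ vanishes iff $l$ is skew as in (\ref{mmk-1}); substituting the formula for $\sigma_0$ into $(d^{\textrm{h}}\sigma)_1(\alpha|\beta)=L_{\rho(\alpha)}(\sigma_1(\beta))-\sigma_1([\alpha,\beta])+i_{\rho(\beta)}(\sigma_0(\alpha))=0$ and using $i_{\rho(\beta)}d_{DR}=L_{\rho(\beta)}-d_{DR}i_{\rho(\beta)}$ yields (\ref{mmk-2}); and $(d^{\textrm{h}}\sigma)_0=0$ follows formally from the other two, just as in Proposition~\ref{int-pr}. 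I expect the one genuinely delicate point to be the identification of the source term $i_{\rho(\beta)}\big(V(\omega)_1(\alpha)\big)$ with $c_{\omega}(\alpha,\beta)=i_{\rho(\alpha)\wedge\rho(\beta)}(\omega)|_M$: this uses $\tau(\alpha)=V(\omega)_1(\alpha)=i_{\alpha}(\omega)|_M$ from Theorem~\ref{cohomological integration1} and requires careful tracking of the interior-product sign convention for $\rho(\alpha)\wedge\rho(\beta)$ so that the inhomogeneous term of (\ref{mmk-2}) comes out with the correct sign.

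Granting this lemma, well-definedness of $\theta\mapsto l$ is its forward direction, and injectivity is immediate from Lemma~\ref{l-uniq}: if two multiplicative primitives $\theta,\theta'$ of $\omega$ produce the same $l$, then $\psi=\theta-\theta'$ is closed, multiplicative, and has $V(\psi)_1=0$, whence $\psi=0$. For surjectivity, given $l$ satisfying (\ref{mmk-1}) and (\ref{mmk-2}) I would set $\sigma_1=-l$ and $\sigma_0(\alpha)=-d_{DR}(\sigma_1(\alpha))-V(\omega)_1(\alpha)$; the backward direction of the lemma gives $d^{\textrm{h}}\sigma=0$ and $-d^{\textrm{v}}\sigma=V(\omega)$. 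Since $G$ is source simply connected its $s$-fibers are $1$-connected, so by Theorem~\ref{Van Est} (applied with connectedness degree $1$) the map $V\colon H^{1}(\hat{\Omega}^{k-1}(G_{\bullet}))\to H^{1}(W^{\bullet,k-1}(A))$ is an isomorphism. Hence the $d^{\textrm{h}}$-cocycle $\sigma$ lies in the image: there is a multiplicative $\theta'\in\hat{\Omega}^{k-1}(G_1)$ with $V(\theta')=\sigma+d^{\textrm{h}}\eta$ for some $\eta\in\Omega^{k-1}(M)=W^{0,k-1}(A)$.

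Finally I would correct $\theta'$ by the coboundary $\delta\eta$, which is normalized (since $s_0^*(\delta\eta)=\eta-\eta=0$) and satisfies $V(\delta\eta)=d^{\textrm{h}}\eta$ because $V$ is the identity on $\hat{\Omega}^{k-1}(G_0)$. Thus $\theta:=\theta'-\delta\eta$ is again multiplicative with $V(\theta)=\sigma$; in particular $V(\theta)_1=\sigma_1=-l$ recovers $l(\alpha)=-i_{\alpha}(\theta)|_M$. It remains to check $\ud(\theta)=\omega$: the form $\ud\theta-\omega$ is closed and multiplicative of degree $k$, and $V(\ud\theta-\omega)=-d^{\textrm{v}}\sigma-V(\omega)=0$, so Lemma~\ref{l-uniq} forces $\ud\theta=\omega$. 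This completes both the construction and the verification, and hence the proof.
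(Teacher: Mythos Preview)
Your proposal is correct and follows essentially the same route as the paper's proof: translate the conditions on $\theta$ into the pair of Weil-algebra equations $d^{\textrm{h}}(-)=0$ and $d^{\textrm{v}}(-)=\pm V(\omega)$, unpack these component-wise exactly as in Proposition~\ref{int-pr} to obtain (\ref{mmk-1}) and (\ref{mmk-2}), then use Theorem~\ref{Van Est} plus the $\delta\eta$-correction for surjectivity and Lemma~\ref{l-uniq} for injectivity and for the final verification $\ud\theta=\omega$. The only difference is notational: you work with $\sigma=V(\theta)$ and the equation $-d^{\textrm{v}}\sigma=V(\omega)$, whereas the paper passes to $\xi=-V(\theta)$ and writes $d^{\textrm{v}}\xi=V(\omega)$; your added remarks (that $\delta\eta$ is normalized, that $V$ is the identity on degree~$0$) and your flagging of the sign in identifying $i_{\rho(\beta)}\big(V(\omega)_1(\alpha)\big)$ with $c_{\omega}(\alpha,\beta)$ are helpful clarifications of points the paper leaves implicit.
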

From the previous two theorems we immediately deduce what is the infinitesimal 
data associated to
multiplicative forms on groupoids.

\begin{remark}\rm \ When $k= 2$, $\Omega^{k-2}(M)= C^{\infty}(M)$, so that (\ref{mmk-1}) is void
while (\ref{mmk-2}) simply becomes $c_{\omega}= \delta(l)$, where $\delta$ is the differential
of the DeRham complex $(\Omega(A), \delta)$ of $A$. Hence, in this case, we obtain the following:
if $\omega\in \Omega^2(G)$ is multiplicative and
closed then $c_{\omega}\in \Omega^2(A)$ is a cocycle and there is a
1-1 correspondence between $\theta\in \Omega^1(G)$ multiplicative
such that $\ud\theta= \omega$ and $l\in \Omega^1(A)$ satisfying
$\delta(l)= \omega$. This is the statement that appears in \cite{Cra2}.
\end{remark}
\begin{corollary}\label{cor:hen} Let $G$ be a source simply connected Lie groupoid over $M$ 
with Lie
algebroid $A$. Then there is a one to one correspondence between:
\begin{itemize}
\item multiplicative forms $\theta \in \Omega^k(G)$.
\item $C^{\infty}(M)$-linear maps $\tau:\Gamma(A)\rightarrow 
\Omega^{k}(M)$ and
$l: \Gamma(A)\rmap \Omega^{k-1}(M)$ satisfying the equations:
\begin{eqnarray}
&i_{\rho(\beta)}(l(\alpha))=-i_{\rho(\alpha)}(l(\beta)),\nonumber\\
& i_{\rho(\beta)}(\tau(\alpha))= - l([\alpha, \beta])+
L_{\rho(\alpha)}(l(\beta))- L_{\rho(\beta)}(l(\alpha))+
d_{DR}(i_{\rho(\beta)}l(\alpha)). \nonumber\\
& \tau([\alpha,\beta])  =  L_{\alpha}(\tau(\beta))-L_{\beta}(\tau(\alpha))
+d_{DR}(i_{\rho(\beta)}\tau(\alpha)),\nonumber
\end{eqnarray}
for all $\alpha, \beta\in \Gamma(A)$.
\end{itemize}
\end{corollary}

\begin{proof} A multiplicative $k$-form $\theta$ is determined by the following data:
\begin{enumerate}
\item A closed multiplicative $(k+1)$-form $\omega$,
\item A multiplicative $k$-form $\theta$ such that $d\theta= \omega$.
\end{enumerate}
Thus, in order to reconstruct $\theta$ it suffices to apply the previous two theorems.
\end{proof}

We now discuss the proof. Let $\sigma= V(\omega)$ and let
us first look at solutions $\xi\in W^{1, k-1}(A)$ of the equations:
\begin{equation}
\label{xi-eq} d^{\textrm{v}}(\xi)= \sigma, d^{\textrm{h}}(\xi)= 0.
\end{equation}
We use the same formulas as in the proof of Proposition \ref{int-pr}
(but applied to $\xi$ instead of $\sigma$) to write out explicitly
the equations. For $d^{\textrm{v}}(\xi)= \sigma$ we find
\[ -d_{DR}(\xi_0(\alpha))= \sigma_0(\alpha), d_{DR}(\xi_1(\alpha))+ \xi_0(\alpha)= \sigma_1(\alpha).\]
As in the proof of Proposition \ref{int-pr}, we only have to
remember the second one. In other words, $d^{\textrm{v}}(\xi)=
\sigma$ tells us that $\xi_0$ is determined by $\xi_1$:
\begin{equation}
\label{xi0} \xi_0(\alpha)= \sigma_1(\alpha)- d_{DR}(\xi_1(\alpha)).
\end{equation}
The condition $d^{\textrm{h}}(\xi)= 0$ gives three equations,
corresponding to the three components. For $(d^{\textrm{h}}(\xi))_2=
0$ we find that $\xi_1$ must satisfy the anti-symmetry condition
(\ref{mmk-1}). For $(d^{\textrm{h}}(\xi))_1= 0$ we find:
\[ \xi_1([\alpha, \beta])= L_{\rho(\alpha)}(\xi_1(\beta))+ i_{\rho(\beta)}(\xi_0(\alpha)).\]
Using the formula for $\xi_0$ in terms of $\xi_1$, we find that
$\xi_1$ must satisfy (\ref{mmk-2}).

Next, if $\theta$ is as in the theorem, we have $\delta \theta= 0$
and $\ud \theta= -\omega$, i.e. $\xi:= - V(\theta)$ must satisfy
(\ref{xi-eq}). The previous discussion shows that $l= \xi_1$ must
satisfy the equations above. From the definition of the Van Est map
it follows that $l(\alpha)= - J_{\alpha}(\theta)= -
i_{\alpha}(\theta)|_{M}$.

Assume now that $l$ satisfies the equations from the statement. Let
$\xi\in W^{1, k-1}(A)$ with $\xi_1= l$ and $\xi_0$ defined by
(\ref{xi0}), so that $\xi$ satisfies (\ref{xi-eq}). Using the Van
Est isomorphism, we find $\theta'\in \Omega^{k-1}(G)$ multiplicative
and $\eta\in \Omega^{k-1}(M)$ such that $V(\theta')= - \xi+
d^{\textrm{h}}(\eta)$. Choose $\theta= \theta'- \delta\eta$. Then
\[ V(\ud(\theta)- \omega)= - d^{\textrm{v}}(V(\theta))- V(\omega)= -d^{\textrm{v}}(-\xi+ d^{\textrm{h}}\eta- V(d^{\textrm{h}}\eta))- \sigma= d^{\textrm{v}}\xi- \sigma= 0.\]
On the other hand,  $\ud(\theta)- \omega$ is both multiplicative and
closed and therefore Lemma \ref{l-uniq} implies that $\omega=
\ud(\theta)$. By construction, the $l$ corresponding to $\theta$ is
the $l$ we started with, concluding the proof of the surjectivity.
For the injectivity, one proceeds exactly as in the proof of Theorem
\ref{cohomological integration1}. If $\theta$ and $\theta'$ have the
same associated $l$ and transgress $\omega$, $\theta- \theta'$ will
be multiplicative and closed with $V(\theta- \theta')_1= l-l= 0$. In
this case Lemma \ref{l-uniq} implies that $\theta= \theta'$.

\section{Appendix: Kalkman's BRST algebra in the infinite dimensional case}
\label{appendix}

In this paper we use some constructions which, although standard in
the finite dimensional case, need some clarification in the infinite
dimensional setting. Here we make these clarifications and we fix
our notations. In particular, we will give an intrinsic description
of Kalkman's BRST algebra which applies also to infinite dimensional
Lie algebras and more general coefficients.

\subsection{Chevalley-Eilenberg complexes} For a representation $V$ of a Lie
algebra $\mathfrak{g}$, the action $\mathfrak{g}\otimes V\rmap V$ is
denoted by $(\alpha, v)\mapsto L_{\alpha}(v)$. The
Chevalley-Eilenberg complex with coefficients in $V$ is
\[ \Lambda(\mathfrak{g}^*, V):= \textrm{Hom}_{\mathbb{R}}(\Lambda \mathfrak{g}, V) ,\]
where ``$\textrm{Hom}_{\mathbb{R}}$'' stands for the space of
$\mathbb{R}$-linear maps. In degree $k$, $\Lambda^k (\mathfrak{g}^*,
V)$ consists of antisymmetric multilinear maps depending on
$k$-variables from $\mathfrak{g}$, with values in $V$. The
Chevalley-Eilenberg differential,
\[ \delta: \Lambda^p (\mathfrak{g}^*, V)\rmap \Lambda^{p+1}(\mathfrak{g}^*, V)\]
is given by the Koszul formula:

\begin{eqnarray*}
(\delta(c))(\alpha_1, \dots ,\alpha_{p+1})&=& \sum_{i<j}(-1)^{i+j} c([\alpha_i,\alpha_j],\dots,
\hat{\alpha_i},\dots,\hat{\alpha_j},\dots,\alpha_{p+1})+\\
&+&\sum_i(-1)^{i+1}L_{\rho(\alpha_i)}(c(\alpha_1,\dots,\hat{\alpha_i},\dots,\alpha_{p+1})).
\end{eqnarray*}

\subsection{Symmetric powers} We now specify our conventions and notations
regarding symmetric powers. For any two vector spaces $E$ and $V$,
the space of $V$-valued polynomials on $E$ is
\[ S(E^*, V):= \textrm{Hom}_{\mathbb{R}}( SE, V).\]
where ``$\textrm{Hom}_{\mathbb{R}}$'' stands for the space of
$\mathbb{R}$-linear maps. A polynomial of degree $k$ will be viewed
either as a symmetric $k$-multilinear map
\[ P: \underbrace{E\times \ldots \times E}_{k\ \textrm{times}} \rmap V\]
or as an actual function on $E$ with values in $V$: $P(\alpha)=
P(\alpha, \ldots , \alpha)$. We will also use the following
operation. For each $\alpha\in E$ there is a partial derivative
\[ \partial_{\alpha}: S^k(E^*, V)\rmap S^{k-1}(E^*, V),\]
\[ \partial_{\alpha}(P)(\alpha_0):= \frac{d}{dt} |_{t= 0} P(\alpha_0+ t\alpha).\]
Here the derivative should be interpreted formally. In the
multilinear notation this operation is:
\[ \partial_{\alpha}(P)(\alpha_0)= k P(\alpha, \alpha_0, \ldots , \alpha_0) .\]

\subsection{Some representations} For any representation $V$ of
$\mathfrak{g}$, $S(\mathfrak{g}^*; V)$ is itself a representation in
a canonical way. The action
\[ \mathfrak{g}\otimes S(\mathfrak{g}^*; V)\rmap S(\mathfrak{g}^*; V),\ \ (\alpha, P)\mapsto L_{\alpha}(P)\]
is induced from the coadjoint action on $\mathfrak{g}^*$ and the
given action on $V$. These, together with the Leibniz identity for
$L_{\alpha}$, determine the action uniquely in the finite
dimensional case. We take the resulting explicit formula as the
definition in the general case:
\[ L_{\alpha}(P)(\alpha_1, \ldots , \alpha_p)= L_{\alpha}(P(\alpha_1, \ldots , \alpha_p))- \sum_{i} P(\alpha_1, \ldots ,
 [\alpha, \alpha_i], \ldots , \alpha_p).\]

A related representation arises in the case when $\mathfrak{g}=
\Gamma(A)$ is the Lie algebra of sections of a Lie algebroid $A$
over $M$. It is the algebra $\Omega(M; SA^*)$ of forms on $M$ with
values in the symmetric algebra of $A$. The action $(\alpha,
\omega)\mapsto L_{\alpha}(\omega)$
is uniquely determined by
\begin{itemize}
\item the Leibniz derivation identity: for all $\omega, \omega'\in \Omega(M; SA^*)$,
\[ L_{\alpha}(\omega \omega')= L_{\alpha}(\omega)\omega'+ \omega L_{\alpha}(\omega').\]
\item on $\Omega(M)$, $L_{\alpha}$
coincides with the usual Lie derivative $L_{\rho(\alpha)}$ along the
vector field $\rho(\alpha)$.
\item on $\Gamma(A^*)$, $L_{\alpha}$ is given by $L_{\alpha}(\xi)(\beta)= L_{\alpha}(\xi(\beta))- \xi(L_{\alpha}(\beta))$.
\end{itemize}
Actually, $\Omega(M; SA^*)$ is just a sub-representation of
$S(\mathfrak{g}^*; V)$ with $V= \Omega(M)$.

\subsection{The Weil algebra with coefficients} Assume now that
$\mathfrak{g}$ is a Lie algebra and $\mathcal{A}$ is a
$\mathfrak{g}$-DG algebra. We define
\[ W(\mathfrak{g}; \mathcal{A}):= \Lambda (\mathfrak{g}^*; S(\mathfrak{g}^*, \mathcal{A}))\]
with the following bi-grading:
\[ W^{p, q}(\mathfrak{g}; \mathcal{A}):= \bigoplus_{k} \Lambda^{p-k}(\mathfrak{g}^*;  S^k(\mathfrak{g}^*, \mathcal{A}^{q-k})).\]
For an element $c\in \Lambda^{p-k}(\mathfrak{g}^*;
S^k(\mathfrak{g}^*, \mathcal{A}^{q-k}))$ we use the notation
\[ c(\alpha_1, \ldots , \alpha_{p-k}|\alpha):= c(\alpha_1, \ldots , \alpha_{p-k})(\alpha)\in \mathcal{A}^{q-k},\]
which is an expression multilinear antisymmetric in its first
entries and polynomial in the last entry. $W(\mathfrak{g};
\mathcal{A})$ has a product structure compatible with the
bi-grading. For
\[ c\in \Lambda^p(\mathfrak{g}^*; S^k(\mathfrak{g}^*, \mathcal{A}^q)), c'\in \Lambda^{p'}
(\mathfrak{g}^*;
 S^{k'}(\mathfrak{g}^*, \mathcal{A}^{q'})),\] $cc'$
is given by
\[ (cc')(\alpha_1, \ldots , \alpha_{p+p'}|\alpha)= (-1)^{q p'}\sum \textrm{sgn}(\sigma) c(\alpha_{\sigma(1)}, \ldots, \alpha_{\sigma(p)}| \alpha) c'(\alpha_{\sigma(p+ 1)}, \ldots, \alpha_{\sigma(p+ p')}| \alpha) ,\]
where the sum is over all $(p, p')$-shuffles. The sign in front of
the sum comes from the finite dimensional case and the standard sign
conventions: in $W(\mathfrak{g})\otimes \mathcal{A}$,
\[ (w\otimes a) (w'\otimes a')= (-1)^{\textrm{deg}(a) \textrm{deg}(w')} (ww'\otimes aa').\]

\subsection{Kalkman's BRST differentials} As in the case of the standard
Weil algebra, there are two differentials. The first one,
$d^{\textrm{h}}$, increases $p$  and is given by
\[ d^{\textrm{h}}(c)= \delta (c)+ i_{\mathcal{A}}(c).\]
Here $\delta$ is the Koszul differential, while
\[ i_{\mathcal{A}}:\Lambda^{p}(\mathfrak{g}^*,  S^k(\mathfrak{g}^*, \mathcal{A}^{q}))\rmap \Lambda^{p}(\mathfrak{g}^*,
 S^{k+1}(\mathfrak{g}^*, \mathcal{A}^{q-1}))\]
 is given by
\[ i_{\mathcal{A}}(c)(\alpha_1, \ldots , \alpha_{p}| \alpha)= (-1)^{p+1}i_{\alpha}(c(\alpha_1, \ldots ,\alpha_{p}| \alpha)).\]
Both $\delta$ and $i_{\mathcal{A}}$ are derivations (and that
motivates the sign in $i_{\mathcal{A}}$).

The second differential, $d^{\textrm{v}}$, increases $q$
 and is given by
\[ d^{\textrm{v}}(c)= d_{\mathcal{A}} (c)+ i_{\mathfrak{g}}(c).\]
Here $d_{\mathcal{A}}$ is given by
\[ d_{\mathcal{A}}(c)(\alpha_1, \ldots , \alpha_{p}|\alpha)= (-1)^{p}d_{\mathcal{A}}(c(\alpha_1, \ldots , \alpha_{p}|\alpha)),\]
 while
\[ i_{\mathfrak{g}}:\Lambda^{p}(\mathfrak{g}^*,  S^k(\mathfrak{g}^*, \mathcal{A}^{q}))\rmap \Lambda^{p-1}(\mathfrak{g}^*,  S^{k+1}(\mathfrak{g}^*, \mathcal{A}^{q}))\]
is given by
\[ i_{\mathfrak{g}}(c)(\alpha_1, \ldots , \alpha_{p-1}|\alpha)= (-1)^{p+1}c(\alpha_1, \ldots , \alpha_{p-1}, \alpha|\alpha ).\]
Again, both $d_{\mathcal{A}}$ and $i_{\mathfrak{g}}$ are
derivations.

In the case that $\mathfrak{g}$ is finite dimensional,
$W(\mathfrak{g}; \mathcal{A})= W(\mathfrak{g})\otimes \mathcal{A}$
and we can use a basis $e^a$ of $\mathfrak{g}$ to write the formulas
more explicitly.  We denote by $\theta^a$ the induced basis of
$\Lambda^1\mathfrak{g}^*$, by $\mu^a$ the induced basis of
$S^1\mathfrak{g}^*$ and by $d_{W}$ the differential of
$W(\mathfrak{g})$. From the derivation property of all the operators
$\delta, i_{\mathcal{A}}$, $d_{\mathcal{A}}$, $i_{\mathfrak{g}}$ and
after a straightforward checking on generators, we deduce that
\[ \delta= d^{\textrm{h}}_{W}\otimes 1  +\theta^a\otimes L_{e^a},\ \ i_{\mathcal{A}}= - \mu^a\otimes i_{e^a},\]
\[ d_{\mathcal{A}}= 1\otimes d_{\mathcal{A}}, \ \ i_{\mathfrak{g}}= d^{\textrm{v}}_{W}\otimes 1.\]
Hence, in this case, $d^{\textrm{h}}+ d^{\textrm{v}}$ does coincide
with Kalkman's differential on $W(\mathfrak{g})\otimes \mathcal{A}$.

\end{document}